\numberwithin{equation}{section}
\theoremstyle{plain} 
\newtheorem{theorem}{\indent\sc Theorem}[section] 
\newtheorem{lemma}[theorem]{\indent\sc Lemma}
\newtheorem{corollary}[theorem]{\indent\sc Corollary}
\theoremstyle{definition} 
\newtheorem{remark}[theorem]{\indent\sc Remark}
\newtheorem{question}{\indent\sc Question}
\newcommand{\R}{{\mathbb R}}
\newcommand{\C}{{\mathbb C}}
\newcommand{\N}{{\mathbb N}}
\newcommand{\Z}{{\mathbb Z}}
\renewcommand{\a}{\alpha}
\renewcommand{\b}{\beta}
\renewcommand{\d}{\partial}
\begin{document}

\title[Local zeta functions]{Meromorphic continuation and
non-polar singularities of local zeta functions in some smooth cases} 

\author[T. Nose]{Toshihiro Nose} 




\renewcommand{\thefootnote}{\fnsymbol{footnote}}
\footnote[0]{2020\textit{ Mathematics Subject Classification}.
Primary 30E15; Secondary 14B05, 14M25}

\keywords{ 
local zeta functions, 
meromorphic continuation,
non-polar singularities,
flat functions.
}
\thanks{ 
$^*$This work was partially supported by 
JSPS KAKENHI Grant Numbers JP19K14563.
}
\bigskip
\address{ 
Faculty of Engineering, Fukuoka Institute of Technology, 
Wajiro-higashi 3-30-1, Higashi-ku, Fukuoka, 811-0295, Japan
}
\email{nose@fit.ac.jp}

\maketitle

\begin{abstract}
It is known that local zeta functions associated with real analytic functions can be analytically continued as meromorphic functions to the whole complex plane.
In this paper, certain cases of specific (non-real analytic) smooth functions are precisely investigated.
In particular, we give asymptotic limits of local zeta functions 
at some singularities along one direction.
It follows from these behaviors that the respective local zeta functions have singularities different from poles.
Then we show the 
optimality of the lower estimates of a certain quantity 
concerning with meromorphic continuation of local zeta functions 
in some smooth model cases. 
\end{abstract}

\section{Introduction}

We consider integrals of the form
\begin{equation}\label{eqn:1.1}
Z_f(\varphi)(s)=\int_{\R^2}|f(x,y)|^s\varphi(x,y)dxdy
\quad \mbox{for $s\in\C$},
\end{equation}
where $f,\varphi$ are real-valued ($C^{\infty}$) smooth functions defined on a small open neighborhood $U$ of the origin in $\R^2$,
and the support of $\varphi$ is contained in $U$.
Since the integrals $Z_f(\varphi)(s)$ converge locally uniformly on the half-plane ${\rm Re}(s)>0$,
they become holomorphic functions there,
which are sometimes called {\it local zeta functions}.
The purpose of this paper is to study the region to which $Z_f(\varphi)(s)$ can be analytically continued.
In this paper, we consider the case where
$f(0,0)=0$ and $\nabla f(0,0)=0$.
Unless the above conditions hold, every problem treated in this paper is easy.

When $f$ is real analytic,
by using Hironaka's resolution of singularities \cite{hir64},
it is shown in \cite{bg69}, \cite{ati70} that
$Z_f(\varphi)(s)$ can be analytically continued as meromorphic functions
to the whole complex plane and their poles are contained in finitely many arithmetic progressions which consist of negative rational numbers.
More precisely, Varchenko \cite{var76} determines the exact location of poles of local zeta functions by using the theory of toric varieties based on the {\it Newton polyhedron} of $f$ under some nondegeneracy condition.
We remark that the above-mentioned results can be naturally extended in the general dimensional case.
Until now,
there have been many precise investigations
\cite{agv88}, \cite{ds89}, \cite{ds92}, \cite{dns05}, \cite{gre10jam}, \cite{ot13}, \cite{cgp13} 
in the real analytic case.
Kamimoto and the author \cite{kn16tor}, \cite{kn16non} generalize Varchenoko's results when 
$f$ belongs to a certain class of smooth functions.

When $f$ is a smooth function,
it may occur that
$Z_f(\varphi)(s)$ cannot be meromorphically continued to the whole complex plane.
In fact, it is shown in \cite{kn19} that
$Z_f(\varphi)(s)$ has a singularity different from poles
in the case of specific smooth functions.
(See \eqref{eqn:2.2} below.)
To investigate such failure of meromorphy of local zeta functions,
Kamimoto and the author
introduce the following quantities in \cite{kn20}:
\begin{equation}
\begin{split}
m_0(f,\varphi)&:=\sup\left\{\rho>0: 
\begin{array}{l} 
\mbox{The domain in which $Z_f(\varphi)$ can} \\ 
\mbox{be meromorphically continued} \\
\mbox{contains the half-plane ${\rm Re}(s)>-\rho$}
\end{array}
\right\},\\
m_0(f)&:=\inf\left\{m_0(f,\varphi):
\varphi\in C_0^{\infty}(U)\right\}.
\end{split}
\end{equation}
Note that $m_0(f,\varphi)=m_0(f)=\infty$ if $f$ is real analytic.
On the other hand,
if $Z_f(\varphi)(s)$ has a singularity different from poles,
then $m_0(f,\varphi)$ and $m_0(f)$ are finite.
In the same paper
\cite{kn20},
the case where $f$ can be expressed in the following form is investigated:
\begin{equation}\label{eqn:1.5}
f(x,y)=u(x,y)x^ay^b+(\mbox{flat function at the origin}),
\end{equation}
where $a,b\in \Z_+$ and $u$ is a smooth function defined near the origin with $u(0,0)\neq 0$.
Here, we say a smooth function defined near the origin is {\it flat} at the origin if all the derivatives of the function vanish at the origin.
Through a classification of \eqref{eqn:1.5} 
with respect to the flat function,
a nontrivial lower estimate of $m_0(f)$ is obtained in a certain case in \cite{kn20}.
(See Theorem \ref{thm:2.2} below.)
We emphasize that
the above case \eqref{eqn:1.5} might be considered as a natural model in the smooth case.
Indeed, the monomial case is essentially important in the real analytic case.
(See \cite{var76}, \cite{agv88}.)
Furthermore, Kamimoto \cite{kam22}
recently 
obtains general lower estimates of $m_0(f)$ for non-flat smooth functions $f$
by constructing {\it almost resolution of singularities} of zero varieties of smooth functions in two dimensions and showing that
any non-flat smooth functions can be expressed as \eqref{eqn:1.5} locally.

The purpose of this paper is to show the optimality of known lower estimates concerning to $m_0(f)$,
where $f$ is as in \eqref{eqn:1.5}. (See Table 1 and Question 1 below.)
To show the optimality,
we investigate the asymptotic behavior of local zeta functions associated with certain smooth functions which are not real analytic.
It is interesting in our results that local zeta functions simultaneously have poles
and non-polar singularities in a certain case.
On the other hand, few properties are known about such non-polar singularities, and it is not even known whether they are isolated singularities or not. (See Question 4 below.)

This paper is organized as follows.
In Section 2, we recall known results for analytic continuation of local zeta functions concerning to certain quantities.
We see a classification of smooth model cases
and lower estimates of the quantities for each case of the classification.
In Section 3, we state our main results.
In order to prove the main results,
we give some auxiliary lemmas in Section 4.
Sections 5--7 are devoted to the proof of 
Theorem \ref{thm:3.1} below.
In particular, analytic continuation of associated integrals are investigated in Section 5
and asymptotic limits of slightly different integrals
are given in Section 6.
In Section 8, we give a proof of the remainder of our results.

{\it Notation and symbols}
\begin{itemize}
\item
We denote by $\Z_{+}$ and $\R_{+}$ the subsets of all nonnegative numbers in $\Z$ and $\R$ respectively.
\item
We denote $C_0^{\infty}(U)$ as the set of all 
smooth functions
on $U$ whose support is compact and contained in $U$.
\item
We use the symbol
\begin{equation}\label{eqn:1.6}
\sigma:={\rm Re}(s)
\quad \mbox{for $s\in \C$} 
\end{equation}
which is traditionally used in the analysis of the Riemann zeta function.
\item
We denote by $\Gamma$ the gamma function.
\end{itemize}

\section{Known results}

In this section, we discuss certain quantities related to analytic continuation of local zeta functions.
These quantities (and local zeta functions) can be naturally generalized to the general dimensional case.
In Sections \ref{sec:2.1} and \ref{sec:2.2} below, these quantities have analogous properties in the general dimensional case if no specific mention is made.

\subsection{Holomorphic continuation of local zeta functions}\label{sec:2.1}
Let $f$, $\varphi$ be as in \eqref{eqn:1.1}.
We consider the following quantities.
\begin{equation}\label{eqn:2.0}
\begin{split}
h_0(f,\varphi)&:=\sup\left\{\rho>0: 
\begin{array}{l} 
\mbox{The domain in which $Z_f(\varphi)$ can} \\ 
\mbox{be holomorphically continued} \\
\mbox{contains the half-plane ${\rm Re}(s)>-\rho$}
\end{array}
\right\},\\
h_0(f)&:=\inf\left\{h_0(f,\varphi):
\varphi\in C_0^{\infty}(U)\right\}.
\end{split}
\end{equation}
It is known that the situation of analytic continuation of local zeta functions are observed through the integrability of the integral $Z_f(\varphi)(s)$,
which is the reason for considering half-planes as above.
(Needless to say, various kinds of region should be dealt with in the future.)
Indeed, holomorphic continuation of $Z_f(\varphi)(s)$ relates to the famous index
\begin{equation}
c_0(f):=\sup
\left\{\mu>0:
\begin{array}{l} 
\mbox{there exists an open neighborhood $V$ of} \\
\mbox{the origin in $U$ such that $|f|^{-\mu}\in L^1(V)$}
\end{array}
\right\},
\end{equation}
which is called 
{\it log canonical threshold} or {\it critical integrability index}.
Since $Z_f(\varphi)(s)$ converges locally uniformly on the half-plane ${\rm Re}(s)>-c_0(f)$,
$Z_f(\varphi)(s)$ becomes a holomorphic function there.
From the above, we have
\begin{equation}
c_0(f)\leq h_0(f) \leq h_0(f,\varphi).
\end{equation}
Moreover, we see from Theorem 5.1 in \cite{kn19} that if $\varphi$ satisfies
\begin{equation}\label{eqn:2.1}
\varphi(0,0)>0\quad \mbox{and}\quad
\varphi(x,y)\geq 0 \mbox{ on } U,
\end{equation}
then $Z_f(\varphi)(s)$ cannot be holomorphically continued
to any open neighborhood of $s=-c_0(f)$.
In particular, we have $h_0(f,\varphi)= c_0(f)$
under the condition \eqref{eqn:2.1},
which implies that the equality:
\begin{equation}
h_0(f)=c_0(f)
\end{equation}
always holds.

The determination of the value of $c_0(f)$ has been studied deeply.
In the seminal work of Verchenko \cite{var76}, when $f$ is real analytic and {\it nondegenerate with respect to its Newton polyhedron},
the index $c_0(f)$ is represented by using the {\it Newton distance} $d(f)$ of $f$ as 
\begin{equation}\label{eqn2:6}
c_0(f)=1/d(f)
\end{equation}
under some conditions. 
(See \cite{var76}, \cite{kn20} for the definitions of the Newton polyhedron,
the nondegeneracy with respect to the Newton polyhedron, the Newton distance, etc.)
We remark that if $f$ is real analytic, then $Z_f(\varphi)(s)$ can be meromorphically continued to the whole complex plane,
and in particular it has a pole at $s=-c_0(f)$ under the condition \eqref{eqn:2.1}.
In the same paper \cite{var76},
the two-dimensional case is also investigated deeply.
Indeed, it is shown that for real analytic $f$,
the equality \eqref{eqn2:6} holds  in {\it adapted coordinates}.
In smooth cases, similar results are given in \cite{gre06}, \cite{kn16tor}.
In particular, Greenblatt \cite{gre06} obtains a sharp result in the two-dimensional case.
\begin{theorem}[\cite{gre06}]\label{thm:2.0}
When $f$ is a nonflat smooth function defined near the origin in $\R^2$,
the equation $c_0(f)=1/d(f)$ holds in adapted coordinates.
\end{theorem}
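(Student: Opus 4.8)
My plan is to work directly in an adapted coordinate system --- legitimate since $c_0(f)$ is invariant under smooth coordinate changes --- and to establish the two inequalities $c_0(f)\le 1/d(f)$ and $c_0(f)\ge 1/d(f)$ separately. Here $d(f)$ denotes the Newton distance of the Newton polygon $\Gamma_+(f)$ (built from the Taylor coefficients of $f$) in these coordinates, so that $(d(f),d(f))$ is the point at which the bisector $\{\alpha_1=\alpha_2\}$ meets $\partial\Gamma_+(f)$. That adapted coordinates exist for a nonflat smooth $f$ is part of the assertion: one reaches them after finitely many coordinate changes, each strictly increasing the Newton distance, and the termination of this process must be argued --- for non-analytic $f$ this needs care, but since each step is governed by a finite jet of $f$, the argument reduces to the combinatorics of the analytic case.

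\emph{The bound $c_0(f)\le 1/d(f)$.} As $f$ is nonflat, $(d(f),d(f))$ lies on a compact edge $\gamma$ of $\Gamma_+(f)$ (possibly at a vertex of $\gamma$); let $w_1\alpha_1+w_2\alpha_2=1$, with $w_1,w_2>0$, be the supporting line of $\gamma$, so $w_1+w_2=1/d(f)$. I would restrict $\int_V|f|^{-\mu}\,dxdy$ to the thin curved region $\{0<x<\delta,\ c_1<y\,x^{-w_1/w_2}<c_2\}$, where $[c_1,c_2]$ is chosen to avoid the finitely many real roots of the edge polynomial $f_\gamma$. The substitution $y=u\,x^{w_1/w_2}$ turns $f$ into $x^{1/w_2}\bigl(P(u)+o(1)\bigr)$ as $x\to 0$, uniformly for $u\in[c_1,c_2]$, where $P$ is a polynomial not vanishing on $[c_1,c_2]$ --- here one uses that the monomials lying strictly above $\gamma$ carry larger $w$-weight, together with explicit remainder control since $f$ need not be analytic. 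The restricted integral then reduces to a positive constant times $\int_0^\delta x^{(w_1-\mu)/w_2}\,dx$, which diverges precisely when $\mu\ge 1/d(f)$; hence $c_0(f)\le 1/d(f)$. Working in adapted coordinates is exactly what guarantees that this region detects the true critical exponent: in non-adapted coordinates the bisector may meet $\partial\Gamma_+(f)$ ``too near the origin'', and a further coordinate change would push the singularity further out.

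\emph{The bound $c_0(f)\ge 1/d(f)$.} Here one must show $\int_V|f|^{-\mu}\,dxdy<\infty$ for every $\mu<1/d(f)$. In the real analytic case this is Varchenko's theorem, deduced from Hironaka's resolution of singularities; in the smooth category one instead builds an \emph{elementary} resolution directly from $\Gamma_+(f)$. The plan is to decompose a neighborhood of the origin into finitely many pieces --- neighborhoods of the coordinate axes and sectors attached to the compact edges and vertices of $\Gamma_+(f)$ --- and on each piece to apply the monomial change of variables $(x,y)=(u^{a_1}v^{b_1},u^{a_2}v^{b_2})$ prescribed by the geometry, so that $f$ pulls back to $u^A v^B\cdot g(u,v)$ with $g$ a unit away from finitely many smooth curves where an edge polynomial vanishes; near each such curve one iterates, and adaptedness guarantees termination. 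On a terminal chart, with Jacobian $u^{E_1}v^{E_2}$, the integral becomes $\int u^{E_1-A\mu}v^{E_2-B\mu}|g|^{-\mu}\,dudv$, which converges as soon as $\mu<(E_1+1)/A$ and $\mu<(E_2+1)/B$. The required inequalities $(E_1+1)/A\ge 1/d(f)$ and $(E_2+1)/B\ge 1/d(f)$ on every chart come from a single convexity estimate: for the weight vector $a$ defining the chart, $\ell(a):=\min_{\alpha\in\Gamma_+(f)}\langle a,\alpha\rangle\le\langle a,(d(f),d(f))\rangle=d(f)(a_1+a_2)$ because $(d(f),d(f))\in\Gamma_+(f)$, and the chart's exponent conditions are exactly versions of $(a_1+a_2)/\ell(a)\ge 1/d(f)$.

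I expect the main obstacle to be the resolution step for a general smooth $f$: showing that the iterative monomialization --- including the auxiliary resolutions near the zero curves of degenerate edge polynomials --- terminates, and that the order-of-vanishing bookkeeping stays compatible with the Newton distance throughout. This is precisely where the hypothesis of adapted coordinates is indispensable: it excludes the configuration in which the bisector meets $\partial\Gamma_+(f)$ at a vertex whose edge polynomial has a root of multiplicity exceeding $d(f)$ aligned with a blow-up axis, which would otherwise force some terminal chart to contribute a critical exponent strictly below $1/d(f)$. Non-analyticity adds a further layer: as Taylor series need not converge, every comparison above must be carried out with quantitative remainder bounds rather than formal power-series manipulations, which accounts for most of the technical length.
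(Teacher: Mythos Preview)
The paper does not prove this theorem at all: it is quoted verbatim from Greenblatt \cite{gre06} as a known result, with no argument supplied. So there is no ``paper's own proof'' to compare against; your proposal stands or falls on its own.

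As a sketch of Greenblatt's argument your outline is broadly faithful. The upper bound $c_0(f)\le 1/d(f)$ via restriction to a quasi-homogeneous strip is standard and correct. For the lower bound you correctly identify the main ingredients --- a Newton-polygon-driven decomposition into sectors, monomialising changes of variables, and an iteration near the real roots of edge polynomials --- and you are right that adaptedness is exactly what rules out the bad case where an edge polynomial has a real root of multiplicity exceeding $d(f)$. What remains genuinely nontrivial, and what Greenblatt's paper spends most of its length on, is the termination and bookkeeping you flag at the end: in the smooth (non-analytic) setting one cannot simply invoke Hironaka, and the iterative step must be shown to strictly reduce some complexity measure while preserving the inequality $(E_i+1)/A_i\ge 1/d(f)$ on every chart. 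Your proposal acknowledges this difficulty but does not resolve it, so as written it is an accurate roadmap rather than a proof.
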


We refer to the paper \cite{im11tams} by Ikromov and M\"uller, which gives equivalent conditions for adaptedness of smooth coordinates
in two dimensions.

\begin{remark}\label{rem:2.2}
In this paper, we focus on functions $f$ of the form \eqref{eqn:1.5}.
In this case, it follows from the results in \cite{im11tams}
that $f$ is expressed in an adapted coordinate
and $d(f)=\max \{a,b\}$.
Then Theorem \ref{thm:2.0} implies that
the integral $Z_f(\varphi)(s)$ becomes a holomorphic function on the half-plane ${\rm Re}(s)>-1/d(f)$ and
has a singularity at $s=-1/d(f)$
under the condition \eqref{eqn:2.1} on $\varphi$,
which implies
$h_0(f)=\min\{1/a,1/b\}$.
\end{remark}

\subsection{Meromorphic continuation of local zeta functions}\label{sec:2.2}
Let $f, \varphi$ be as in \eqref{eqn:1.1}.
In a similar fashion to the investigation of holomorphic continuation,
we consider the following quantities, which are introduced in \cite{kn20}:
\begin{equation}
\begin{split}
m_0(f,\varphi)&:=\sup\left\{\rho>0: 
\begin{array}{l} 
\mbox{The domain to which $Z_f(\varphi)$ can} \\ 
\mbox{be meromorphically continued} \\
\mbox{contains the half-plane ${\rm Re}(s)>-\rho$}
\end{array}
\right\},\\
m_0(f)&:=\inf\left\{m_0(f,\varphi):
\varphi\in C_0^{\infty}(U)\right\}.
\end{split}
\end{equation}
From the definitions,
we have
\begin{equation}\label{eqn:2.8}
\begin{split}
&h_0(f)\leq m_0(f) \leq m_0(f,\varphi),\\
&h_0(f,\varphi)\leq m_0(f,\varphi).\\
\end{split}
\end{equation}

As mentioned in the Introduction, meromorphic continuation of $Z_f(\varphi)(s)$ is precisely investigated in real analytic case.
In particular, if $f$ is real analytic, then $m_0(f)=\infty$ holds.
When $f$ is not real analytic,
on the other hand, it may occur in smooth cases that
$m_0(f)$ is finite.
Indeed, the following specific smooth functions, which are not real analytic, are investigated in \cite{kn19}:
\begin{equation}\label{eqn:2.2}
f(x,y)=x^a y^b+x^a y^{b-q} e^{-1/|x|^p},
\end{equation}
where $a,b,p,q$ satisfy that
\begin{itemize}
\item
$a,b$ are nonnegative integers satisfying $a<b, 2\leq b$,
\item
$p$ is a positive real number,
\item
$q$ is an even integer satisfying $2\leq q \leq b$.
\end{itemize}
The functions \eqref{eqn:2.2} are special cases of \eqref{eqn:1.5}.
From Remark \ref{rem:2.2},
the integral $Z_f(\varphi)(s)$ becomes a holomorphic function on the half-plane ${\rm Re}(s)>-1/b$ and
has a singularity at $s=-1/b$
under the condition \eqref{eqn:2.1} on $\varphi$.
More precisely, it is shown in \cite{kn19} that
the singularity of $Z_f(\varphi)(s)$ at $s=-1/b$
is different from poles. (See Remark \ref{rem:3.6} below.)
In particular, 
the value of $m_0(f)$ is determined as follows.

\begin{theorem}[\cite{kn19}]\label{thm:2.1}
Let $f$ be as in \eqref{eqn:2.2}.
Then $m_0(f)=1/b$ holds.
\end{theorem}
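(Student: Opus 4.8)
The plan is to establish the claimed value $m_0(f)=1/b$ for $f$ as in \eqref{eqn:2.2} by combining a lower bound, valid for all test functions, with a matching upper bound obtained from a single carefully chosen test function. The lower bound $m_0(f)\ge 1/b$ should follow from the general machinery of \cite{kn20} applied to the form \eqref{eqn:1.5}: since $f=x^ay^b+x^ay^{b-q}e^{-1/|x|^p}$, the flat perturbation $x^ay^{b-q}e^{-1/|x|^p}$ is divisible by the monomial part in a controlled way, and the classification there yields that $Z_f(\varphi)$ continues meromorphically at least to $\mathrm{Re}(s)>-1/b$ for every $\varphi\in C_0^\infty(U)$. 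For the upper bound $m_0(f)\le 1/b$ it suffices to exhibit one $\varphi$ satisfying \eqref{eqn:2.1} for which $Z_f(\varphi)$ fails to be meromorphic at $s=-1/b$; here I would invoke the sharper analysis already announced in the excerpt, namely that for such $\varphi$ the singularity of $Z_f(\varphi)(s)$ at $s=-1/b$ is \emph{different from a pole}. Once that non-polar singularity at $s=-1/b$ is in hand, $m_0(f,\varphi)\le 1/b$, hence $m_0(f)\le 1/b$, and the two inequalities close the argument.

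Concretely, I would first reduce the study of $Z_f(\varphi)(s)$ near $s=-1/b$ to a one-dimensional problem. Choosing $\varphi(x,y)=\psi(x)\chi(y)$ with $\psi,\chi\ge 0$, $\psi(0),\chi(0)>0$, supported near $0$, and performing the $y$-integration for fixed small $x>0$, one writes $|f(x,y)|^s=|x|^{as}|y^b+y^{b-q}e^{-1/|x|^p}|^s$ and substitutes $y=(e^{-1/|x|^p})^{1/q}\,t$ to expose the scale $\tau(x):=e^{-1/(q|x|^p)}$ governing the competition between the two terms $y^b$ and $y^{b-q}e^{-1/|x|^p}$. The inner integral then factors as a power of $\tau(x)$ times a function of $s$ that is holomorphic near $s=-1/b$ plus an error, so that $Z_f(\varphi)(s)$ becomes, up to a manifestly holomorphic piece, an integral of the shape $\int_0^\delta x^{(a+?)s+\cdots}\,(\log(1/x))^{\#}\,e^{-\gamma/|x|^p}\,(\text{something})\,dx$. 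The presence of the genuinely flat weight $e^{-\gamma/|x|^p}$ with $\gamma>0$ is exactly what produces, after Mellin/Laplace analysis of such an integral, an essential-type (non-polar) singularity at $s=-1/b$ rather than a pole — this is the mechanism isolated in \cite{kn19}, and I would quote it rather than re-derive it.

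The main obstacle, and the step deserving the most care, is making the reduction in the previous paragraph uniform in $x$ near $0$: the substitution trivializes the integral only on the region $|y|\lesssim \tau(x)$, and one must control the complementary region $|y|\gtrsim\tau(x)$ (where the pure monomial $y^b$ dominates) and show that its contribution, while responsible for the ``expected'' behavior $\sim$ (holomorphic)$/(s+1/b)$ or the branch point forced by $h_0(f)=1/b$, does not cancel the flat contribution. Equivalently, one must verify that the flat term contributes a summand to $Z_f(\varphi)(s)$ whose singularity at $s=-1/b$ is strictly worse than any pole and cannot be annihilated by the regular part; a clean way is to extract the leading asymptotics of $Z_f(\varphi)(\sigma)$ as $\sigma\downarrow -1/b$ along the real axis (an ``asymptotic limit along one direction'', in the language of the abstract) and observe it violates the growth rate compatible with meromorphy — e.g. it blows up faster than any power of $(\sigma+1/b)^{-1}$, or oscillates in a way incompatible with a Laurent expansion.

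Finally I would assemble the pieces: the lower bound from \cite{kn20} gives $1/b\le m_0(f)$; Remark \ref{rem:2.2} and Theorem \ref{thm:2.0} pin down $h_0(f)=\min\{1/a,1/b\}=1/b$ (using $a<b$), so $Z_f(\varphi)$ is holomorphic for $\mathrm{Re}(s)>-1/b$; and the non-polar singularity at $s=-1/b$ established above for an admissible $\varphi$ gives $m_0(f)\le m_0(f,\varphi)\le 1/b$. Combining these yields $m_0(f)=1/b$, as claimed.
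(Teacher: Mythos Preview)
The paper does not prove Theorem~\ref{thm:2.1}; it is quoted from \cite{kn19}.  What the paper does record (Lemma~8.2 and the Remark following Theorem~\ref{thm:3.3}) is that the method of \cite{kn19} is precisely the one you outline: compute the asymptotic limit of $Z_f(\varphi)(\sigma)$ as $\sigma\to -1/b+0$ along the real axis for a $\varphi$ satisfying \eqref{eqn:2.1}, and observe that the resulting behavior is incompatible with a pole.  So your overall architecture --- trivial lower bound $m_0(f)\ge h_0(f)=1/b$, then a single $\varphi$ giving $m_0(f,\varphi)\le 1/b$ via a real-axis asymptotic --- is exactly right.

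However, your prediction of \emph{what} that asymptotic looks like is wrong, and this matters for the analysis you propose.  The flat weight $e^{-1/|x|^p}$ does \emph{not} produce an essential singularity, nor does $Z_f(\varphi)(\sigma)$ blow up faster than any power of $(b\sigma+1)^{-1}$.  The actual behavior (Lemma~8.2, with $\tilde{W}$ the model integral for \eqref{eqn:2.2}) is
\[
Z_f(\varphi)(\sigma)\ \sim\
\begin{cases}
C\cdot(b\sigma+1)^{-\alpha}, & \alpha=1-\dfrac{1-a/b}{p}\in(0,1),\ \text{if } p>1-a/b,\\[2pt]
C\cdot|\log(b\sigma+1)|, & \text{if } p=1-a/b,\\[2pt]
C(\varphi)\ \text{(finite limit)}, & \text{if } 0<p<1-a/b.
\end{cases}
\]
In every case the growth is \emph{slower} than a simple pole.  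The non-polar conclusion comes from the exponent $\alpha$ being non-integer (first case), from the logarithm (second case), or from combining the finiteness of the limit with the fact (Section~\ref{sec:2.1}) that $Z_f(\varphi)$ cannot be \emph{holomorphically} continued past $-1/b$ when $\varphi$ satisfies \eqref{eqn:2.1} (third case).  Your proposed Mellin/Laplace step ``the flat weight forces an essential-type singularity'' is therefore the wrong computation; the correct one is more elementary --- after your substitution $y=\tau(x)t$ one is reduced to an integral of the form $\int_0^r x^{A+BX}(e_{p,q}(x))^X\,dx$ with $X=b\sigma+1$, whose asymptotics as $X\to 0^+$ are powers and logarithms of $X$ (see the auxiliary Lemmas~\ref{lem:4.1} and \ref{lem:4.4} in this paper), not super-polynomial.

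A minor point: your initial appeal to \cite{kn20} for the lower bound is unnecessary.  Since $f$ in \eqref{eqn:2.2} belongs to case~(B), $Z_f(\varphi)$ is already holomorphic on $\mathrm{Re}(s)>-1/b$ by the convergence of the integral (Remark~\ref{rem:2.2}); no meromorphic continuation argument is needed there, and you correctly fall back to $m_0(f)\ge h_0(f)=1/b$ at the end.
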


In general, it is difficult to determine the values of $m_0(f)$ for smooth functions $f$.

\subsection{A classification of smooth model cases}

In this section, we consider smooth functions $f$ of the form \eqref{eqn:1.5},
which are regarded as a model of smooth functions in two dimensions.
Without loss of generality, we assume that $a\leq b$ in \eqref{eqn:1.5}.
From Remark \ref{rem:2.2} and \eqref{eqn:2.8},
we have the equality: $h_0(f)=1/b$ and a uniform lower estimate of $m_0(f)$ as $m_0(f)\geq 1/b$.

Now, we have a classification of functions of the form \eqref{eqn:1.5} by using Taylor's formula.

\begin{lemma}[Lemma 3.1 of \cite{kn20}]\label{lem:2.1}
Let $f$ be as in \eqref{eqn:1.5}.
If $U$ is sufficiently small, then
$f$ can be expressed on $U$ 
as one of the following four forms. 
\begin{enumerate}
\item[(A)]
$f(x,y)=
v(x,y)x^a y^b$,
\item[(B)]
$f(x,y)=
v(x,y)x^a y^b 
+ g(x,y)$,
\item[(C)]
$f(x,y)=
v(x,y)x^a y^b 
+ h(x,y)$,
\item[(D)]
$f(x,y)=
v(x,y)x^a y^b 
+ g(x,y)+h(x,y)$,
\end{enumerate}
where $a\leq b$ and $v,g,h$ are smooth functions defined on $U$ 
satisfying the following properties:
\begin{itemize}
\item $v(0,0)=u(0,0)\neq 0$.
\item 
$g, h$ are {\it non-zero} flat functions admitting the forms:
\begin{equation}
g(x,y)=\sum_{j=0}^{b-1}y^{j}g_j(x)
\quad
\mbox{ and } \quad
h(x,y)=\sum_{j=0}^{a-1}x^{j}h_j(y),
\end{equation}
where $g_j, h_j$ are flat at the origin.
\end{itemize}
\end{lemma}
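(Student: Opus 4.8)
The plan is to obtain the decomposition by applying Taylor's formula with integral remainder twice to the flat part of $f$: once in the variable $y$ to depth $b$, and then once in the variable $x$, applied to the remainder of the first step, to depth $a$. Write $f(x,y)=u(x,y)x^a y^b+\psi(x,y)$ as in \eqref{eqn:1.5}, with $\psi$ flat at the origin. First I would shrink $U$ to a small open box centred at the origin; this guarantees that for every $(x,y)\in U$ the points $(x,0)$, $(0,y)$ and the segments joining $(x,y)$ to them lie in $U$, so that the integral remainders below are smooth on all of $U$. Interchanging $x$ and $y$ does nothing, so we keep the normalization $a\le b$, which plays no role in the argument.

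The one substantive point is that Taylor expansion preserves flatness. Concretely: if $F$ is smooth on $U$ and flat at the origin and $k\ge 1$, then writing
\[
F(x,y)=\sum_{j=0}^{k-1}\frac{\partial_y^j F(x,0)}{j!}\,y^j+y^k R(x,y),\qquad
R(x,y):=\frac{1}{(k-1)!}\int_0^1(1-t)^{k-1}\partial_y^k F(x,ty)\,dt,
\]
each coefficient $x\mapsto\partial_y^j F(x,0)/j!$ is flat at $x=0$ (immediate, since $\partial_x^m\partial_y^j F(0,0)=0$ for all $m$), while $R$ is smooth on $U$ by differentiation under the integral sign and is itself flat at the origin, because $\partial_x^m\partial_y^n R(0,0)$ equals, up to a positive constant, $\bigl(\int_0^1(1-t)^{k-1}t^n\,dt\bigr)\,\partial_x^m\partial_y^{n+k}F(0,0)=0$. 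I would record this as a short sublemma (it is the only non-bookkeeping step).

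Applying this to $F=\psi$ with $k=b$ gives $\psi(x,y)=\sum_{j=0}^{b-1}y^j g_j(x)+y^b R(x,y)$ with every $g_j$ flat at the origin and $R$ smooth and flat. Applying it again, now in the variable $x$, to $F=R$ with $k=a$ gives $R(x,y)=\sum_{i=0}^{a-1}x^i r_i(y)+x^a S(x,y)$ with every $r_i$ flat and $S$ smooth and flat. Setting $h_i(y):=y^b r_i(y)$ — flat at the origin, as a product of a flat function with $y^b$ — and $h(x,y):=\sum_{i=0}^{a-1}x^i h_i(y)$, $g(x,y):=\sum_{j=0}^{b-1}y^j g_j(x)$, we get $\psi=g+h+x^a y^b S$ and hence
\[
f(x,y)=v(x,y)\,x^a y^b+g(x,y)+h(x,y),\qquad v:=u+S,
\]
where $v$ is smooth on $U$ with $v(0,0)=u(0,0)+S(0,0)=u(0,0)\ne 0$ by flatness of $S$, and $g,h$ are flat and have exactly the stated shape.

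It then remains only to split into the four cases according to whether $g\equiv 0$ and/or $h\equiv 0$ on $U$: both vanishing gives (A), only $h\equiv 0$ gives (B), only $g\equiv 0$ gives (C), and neither vanishing gives (D). The argument is essentially formal; the only place requiring any care — the ``main obstacle'', such as it is — is the flatness-preservation step above, together with the choice of $U$ that makes the two integral remainders smooth on all of $U$.
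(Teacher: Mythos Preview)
Your proof is correct and follows exactly the approach the paper itself indicates: the present paper does not prove this lemma but cites it as Lemma~3.1 of \cite{kn20}, noting only that the classification is obtained ``by using Taylor's formula''. Your iterated Taylor expansion with integral remainder---first in $y$ to order $b$, then in $x$ to order $a$ on the remainder---is precisely the intended argument, and your care with the flatness-preservation sublemma and the shrinking of $U$ to a box is appropriate.
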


\begin{remark}\label{rem:2.5}
Let us consider the above four cases under the following special conditions.
\begin{enumerate}
\item When $a=b$, the cases (B) and (C) are equivalent under switching  the $x$ and $y$ variables.
\item The case (C) with the condition: $a=0$ belongs to the case (A).
\item The case (D) with the condition: $a=0$ belongs to the case (B).
\item Under the condition: $b=0$, which implies $a=0$,
the cases (B), (C), (D) belong to the case (A).
\item
The case (B) with the condition: $b=1$ is reduced to the case (A)
under some coordinate change.
\item
The case (C) with the condition: $a=1$ is reduced to the case (A)
under some coordinate change.
\item
The case (D) with the condition: $a=b=1$ is reduced to the case (A)
under some coordinate change by using the Morse lemma.
\end{enumerate}
\end{remark}

When $f$ belongs to the case (A),
$f$ can be expressed as a monomial under a smooth coordinate change on $\R^2$,
which gives that $m_0(f)=\infty$ holds.
In the case (C), we have a nontrivial lower estimate of $m_0(f)$
as follows.

\begin{theorem}[\cite{kn20}]\label{thm:2.2}
Let $f$ be as in $(C)$ in Lemma $\ref{lem:2.1}$.
Then $m_0(f)\geq 1/a$ holds.
\end{theorem}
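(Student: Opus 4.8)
The plan is to localise $Z_f(\varphi)$ near the origin, pull out a monomial factor using flatness, and then reduce what remains to a one–variable zeta integral by a Weierstrass–preparation argument together with a sharp parameter–uniform estimate. We may assume $1\le a\le b$ (if $a=0$ or $b=0$ the function falls under case (A), where $m_0(f)=\infty$). Since each $h_j$ is flat at the origin, write $h_j(y)=y^{b}\tilde h_j(y)$ with $\tilde h_j$ smooth and flat; then near the origin $f(x,y)=y^{b}g(x,y)$ with $g(x,y)=v(x,y)x^{a}+\sum_{j=0}^{a-1}x^{j}\tilde h_j(y)$. Because $v(0,0)\neq 0$, the function $g(\cdot,0)=v(\cdot,0)x^{a}$ vanishes to order exactly $a$ at $x=0$, so the Malgrange (smooth Weierstrass) preparation theorem gives $g=e\cdot W$ on a neighbourhood $\{|x|<\varepsilon_1,\ |y|<\varepsilon_2\}$, where $e$ is a smooth unit and $W(x,y)=x^{a}+\sum_{j=0}^{a-1}b_j(y)x^{j}$ is a Weierstrass polynomial. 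A short induction on $k\ge 0$, comparing $\partial_y^{k}g|_{y=0}$ — which is divisible by $x^{a}$ because the $\tilde h_j$ are flat — with the Leibniz expansion of $\partial_y^{k}(eW)|_{y=0}$, shows that $W(x,0)=x^{a}$ and, more generally, that every $b_j$ is flat at $y=0$; equivalently, the complex roots $\lambda_1(y),\dots,\lambda_a(y)$ of $W(\cdot,y)$ satisfy $\max_k|\lambda_k(y)|=O(|y|^{N})$ for every $N$.

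Possibly after shrinking $U$, for any $\varphi\in C_0^{\infty}(U)$ we then have $Z_f(\varphi)(s)=\int|y|^{bs}|W(x,y)|^{s}A(x,y;s)\,dxdy=\int|y|^{bs}B(s,y)\,dy$, where $A:=|e|^{s}\varphi$ is a smooth, compactly supported amplitude, entire in $s$ with the usual derivative bounds on compact $s$-sets, and $B(s,y):=\int|W(x,y)|^{s}A(x,y;s)\,dx$. For each fixed $y$, $W(\cdot,y)$ is a polynomial of degree $a$ in $x$, so $B(s,y)$ is holomorphic on $\sigma>-1/a$, uniformly in $y$; here one uses the elementary bound $\int_{|x|\le r}|W(x,y)|^{\sigma}\,dx\le C_{\sigma}\,r^{a\sigma+1}$, valid whenever $a\sigma+1>0$ (by H\"older over the $a$ linear factors, the roots being bounded). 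Splitting the $y$-integral, the part over $\{|y|\ge\varepsilon\}$ is an integral over a compact set of a function continuous in $y$ and holomorphic in $s$ on $\sigma>-1/a$, hence holomorphic there, and we are reduced to $\{|y|<\varepsilon\}$.

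For small $|y|$ I would compare $B(s,y)$ with $B_0(s,y):=\int|x|^{as}A(x,y;s)\,dx$ by splitting the $x$-integral at $|x|=|y|^{M}$ for large $M$. On $\{|x|>|y|^{M}\}$ the flatness of the $b_j$ gives $W(x,y)=x^{a}(1+\eta)$ with $|\eta|\le\sum_j|b_j(y)|\,|x|^{-(a-j)}=O(|y|^{N})$, so $|W|^{s}=|x|^{as}(1+O(|s|\,|y|^{N}))$ and this region reproduces $B_0(s,y)$ up to an error $O(|y|^{M(a\sigma+1)})$; on $\{|x|\le|y|^{M}\}$ the same H\"older bound gives $\int_{|x|\le|y|^{M}}|W(x,y)|^{s}\,dx=O(|y|^{M(a\sigma+1)})$. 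Since $M$ and $N$ are arbitrary, $B(s,y)=B_0(s,y)+R(s,y)$ with $R(s,y)=O(|y|^{K})$ for every $K$, locally uniformly on $\sigma>-1/a$. Now $B_0(s,y)$ is smooth in $y$ (differentiating under the integral is legitimate since $\sigma a>-1$) and holomorphic in $s$ on $\sigma>-1/a$; Taylor–expanding it in $y$ and using the elementary meromorphic continuation of $s\mapsto\int|y|^{bs}y^{\ell}\,dy$, whose only pole is at $s=-(\ell+1)/b$, the integral $\int_{|y|<\varepsilon}|y|^{bs}B_0(s,y)\,dy$ continues meromorphically to $\sigma>-1/a$ with poles contained in $\{-n/b:n\in\Z_{\ge1}\}$; choosing $K$ with $(K+1)/b>1/a$, the remainder $\int_{|y|<\varepsilon}|y|^{bs}R(s,y)\,dy$ is holomorphic on $\sigma>-(K+1)/b$, in particular on $\sigma>-1/a$. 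Assembling the pieces, $Z_f(\varphi)$ is meromorphic on $\sigma>-1/a$ for every $\varphi\in C_0^{\infty}(U)$, whence $m_0(f)\ge 1/a$.

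The main obstacle is the comparison $B(s,y)-B_0(s,y)=O(|y|^{\infty})$ locally uniformly in $s$ on $\sigma>-1/a$. The point is that $|W(x,y)|^{s}$ is \emph{not} smooth in $y$ — it is singular along $\{W=0\}$ — so one cannot simply differentiate under the integral sign; instead the flatness of the Weierstrass coefficients (equivalently, the superpolynomial decay of the roots of $W(\cdot,y)$) must be combined with the sharp, parameter–uniform estimate $\int_{|x|\le r}|W(x,y)|^{\sigma}\,dx\le C_{\sigma}r^{a\sigma+1}$ for $\sigma>-1/a$, which is precisely where the degree $a$, and nothing finer, enters and produces the bound $1/a$. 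This also explains why no analytic resolution of singularities finishes the argument directly: iterating blow–ups toward the $y$-axis never normalises $f$ in finitely many steps, and the flatness estimate is what takes the place of that (infinite) resolution.
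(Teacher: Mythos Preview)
The paper does not prove Theorem~\ref{thm:2.2}; it is quoted from \cite{kn20}. Thus there is no proof in this paper to compare against directly. That said, the paper does establish the corresponding meromorphic continuation for the specific model \eqref{eqn:3.1} in Theorem~\ref{thm:5.2}, and the method there (which is in the spirit of \cite{kn20}) is quite different from yours: the integration domain is split into three pieces $U_1,U_2,U_3$ according to the relative size of $x^q$ and the flat perturbation (Section~\ref{sec:5.1}), an integration by parts in $y$ is performed on $U_1$ to exhibit the pole at $s=-(\beta+1)/b$ (Lemma~\ref{lem:5.6}), and the remaining pieces are shown to be holomorphic by direct estimation (Lemma~\ref{lem:5.7}). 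No preparation theorem enters.

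Your argument is essentially correct and offers a genuinely different route. Factoring $f=y^{b}g$ using flatness, applying the smooth (Malgrange) preparation theorem to $g$, and then proving inductively that the Weierstrass coefficients $b_j$ are flat is a clean way to encode case~(C); the superpolynomial decay of the roots follows. Your key comparison $B(s,y)-B_0(s,y)=O(|y|^{K})$ for every $K$, locally uniformly on $\sigma>-1/a$, is the heart of the matter, and your two-scale split at $|x|=|y|^{M}$ together with the H\"older bound $\int_{|x|\le r}\prod_{k=1}^{a}|x-\lambda_k|^{\sigma}\,dx\le C_{\sigma}r^{a\sigma+1}$ (valid when all $|\lambda_k|\le r$ and $a\sigma+1>0$) does the job; note only that $M$ must be chosen depending on the compact $s$-set (since the exponent $M(a\sigma+1)$ degenerates as $\sigma\downarrow -1/a$), which you acknowledge. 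The final step, Taylor expanding $B_0(s,y)$ in $y$ and invoking the elementary meromorphic continuation of $\int|y|^{bs}y^{\ell}\,dy$, is standard and yields exactly the pole set $\{-j/b: j\in\N,\ j<b/a\}$ that the paper records.

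In short: your Weierstrass-preparation approach is more conceptual and treats the general case~(C) in one stroke, while the paper's (and \cite{kn20}'s) domain-decomposition method is more explicit and, for the concrete models considered here, also yields the quantitative asymptotics of Sections~\ref{sec:5}--\ref{sec:6} that drive the non-polar singularity results.
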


A list of the above mentioned results is given in Table 1.
\begin{table}[H]
\caption[]{The values of
$h_0(f)$ and 
$m_0(f)$.}
\begin{center}
\begin{tabular}{r|cccc} \toprule
 & (A) & (B) & (C) & (D) \\  \midrule 
$h_0(f)$ & $1/b$ & $1/b$ & $1/b$ & $1/b$ \\
$m_0(f)$  & $\infty$ & $\geq 1/b$ & $\geq 1/a$ & $\geq 1/b$ 
\\ \bottomrule
\end{tabular}
\end{center}
\end{table}
Here,
the following question naturally arises.
\begin{question}
Are the lower estimates of $m_0(f)$ in Table 1 optimal ?
Here, their optimalities mean that
there exists a smooth function
$f$ of each form such that the value of $m_0(f)$
attains the lower bound of $m_0(f)$.
\end{question}

Since $f$ in \eqref{eqn:2.2} belongs to the case (B),
Theorem \ref{thm:2.1} gives the optimality of the estimate:
$m_0(f) \geq 1/b$ in the case (B).
In this paper, we consider the cases (C) and (D) in Question 1.

In \cite{kam22}, Kamimoto introduces a certain quantity for a non-flat smooth function $f$ and obtains a general lower estimate of $m_0(f)$ by using this quantity.
His result implies the lower estimates in Table 1.

\section{Main results}

Let us consider the integrals $Z_f(\varphi)(s)$ with 
smooth functions of the following form:
\begin{equation}\label{eqn:3.1}
f(x,y)=x^ay^b+x^{a-q}y^{b}e^{-1/|y|^p},
\end{equation}
where $a,b,p,q$ satisfy that
\begin{itemize}
\item
$a,b$ are positive integers satisfying
$2\leq a\leq b$,
\item
$p$ is a positive real number,
\item
$q$ is an even integer satisfying $2\leq q\leq a$.
\end{itemize}
We remark that $e^{-1/|y|^p}$ is regarded as a smooth function defined on $\R$ by considering that its value at $0$ takes $0$.
The function $e^{-1/|y|^p}$ is flat at the origin and 
is not real analytic.
(This function also appears in \eqref{eqn:2.2} and \eqref{eqn:3.7} below.)
By noticing that $f$ belongs to the case (C) in Lemma \ref{lem:2.1},
Theorem \ref{thm:2.2} implies the existence of meromorphic continuation of $Z_f(\varphi)(s)$
to the half-plane ${\rm Re}(s)>-1/a$.
We obtain the asymptotic limit of local zeta functions associated with $f$ as $s\to -1/a+0$ along the real axis.

\begin{theorem}\label{thm:3.1}
Let $f$ be as in \eqref{eqn:3.1}
and $\varphi$ as in \eqref{eqn:1.1}.
Then $Z_{f}(\varphi)(s)$ can be analytically continued as a meromorphic function, denoted again by $Z_f(\varphi)(s)$, to the half-plane ${\rm Re}(s)>-1/a$,
and its poles are contained in the set $\{-j/b:j\in \N \mbox{ with } j<b/a\}$.
Moreover the following hold:
\begin{enumerate}
\item
If at least one of the following two conditions is satisfied:
\begin{enumerate}
\item
$0<p<b/a-1$,
\item
$b/a$ is not an odd integer,
\end{enumerate}
then
\begin{equation}\label{eqn:3.2}
\lim_{\sigma\to -1/a+0}(a\sigma+1)^{1+\frac{b/a-1}{p}}
\cdot Z_f(\varphi)(\sigma) =
-4C\cdot \varphi(0,0),
\end{equation}
where $C$ is the positive constant defined by
\begin{equation}\label{eqn:3.3}
C=\frac{q^{(b/a-1)/p}}{(b/a-1)}\Gamma\left(1+\frac{b/a-1}{p}\right).
\end{equation}
Here, $\Gamma$ denotes the gamma function.
\item
If $p=b/a-1$ and $b/a$ is an odd integer,
then
\begin{equation}
\lim_{\sigma\to -1/a+0}(a\sigma+1)^{2}
\cdot Z_f(\varphi)(\sigma) =
-\frac{4q}{p}\cdot \varphi(0,0)
+\frac{4a}{b p!}\cdot \frac{\d^{p}\varphi}{\d y^{p}}(0,0).
\end{equation}
\item
If $p>b/a-1$ and $b/a$ is an odd integer,
then
\begin{equation}
\lim_{\sigma\to -1/a+0}(a\sigma+1)^{2}
\cdot Z_f(\varphi)(\sigma) =
\frac{4a}{b m!}\cdot \frac{\d^{m}\varphi}{\d y^{m}}(0,0),
\end{equation}
where 
$m=b/a-1$.
\end{enumerate}
\end{theorem}

We notice that
if $Z_f(\varphi)(s)$ can be meromorphically continued across the point $s=-1/a$
and 
it had a pole of order $n$ at $s=-1/a$,
then 
the limit 
$\lim_{\sigma\to -1/a+0}(a\sigma+1)^{n}\cdot Z_f(\varphi)(\sigma)$
must be a nonzero constant.
From Theorem \ref{thm:3.1} (i), we have the following.

\begin{corollary}\label{cor:3.2}
Let $f$ be as in \eqref{eqn:3.1} and $\varphi$ as in \eqref{eqn:1.1}.
Assume that at least one of the following two conditions is satisfied:
\begin{itemize}
\item[(a)]
$0<p<b/a-1$,
\item[(b)]
$b/a$ is not an odd integer.
\end{itemize}
If $(b/a-1)/p$ 
is not an integer and $\varphi(0,0)\neq 0$,
then $Z_f(\varphi)(s)$ cannot be meromorphically continued to any open neighborhood of $s=-1/a$ in $\C$.
In particular,
$m_0(f)=1/a$ holds.
\end{corollary}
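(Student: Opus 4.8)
The plan is to obtain Corollary \ref{cor:3.2} as a soft consequence of Theorem \ref{thm:3.1}(i): the key point is that a power-type asymptotic with a \emph{non-integer} exponent is incompatible with meromorphy at that point. Throughout, I fix $\varphi\in C_0^\infty(U)$ with $\varphi(0,0)\neq 0$, put $n:=1+\frac{b/a-1}{p}$, and let $\kappa:=\frac{-4pC}{q(b/a-1)}\varphi(0,0)$ be the constant in Theorem \ref{thm:3.1}(i). Note that $b/a-1>0$ (if it vanished, $(b/a-1)/p$ would be the integer $0$, against the hypothesis), so from $C>0$, $p,q>0$ and $\varphi(0,0)\neq 0$ one gets $\kappa\neq 0$; moreover $n>1>0$ and $n\notin\Z$.

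Next I argue by contradiction: suppose $Z_f(\varphi)$ extends meromorphically to some open neighbourhood of $s=-1/a$, with Laurent expansion $Z_f(\varphi)(s)=\sum_{k\ge -N}c_k(s+1/a)^k$ on a punctured disc, where $N\ge 0$ is the pole order and $c_{-N}\neq 0$ when $N\ge 1$. Since $a\sigma+1=a(\sigma+1/a)\to 0^+$ as $\sigma\to -1/a+0$, one checks that for each integer $m\ge 0$ the quantity $(a\sigma+1)^m Z_f(\varphi)(\sigma)=a^m\sum_{k\ge -N}c_k(\sigma+1/a)^{k+m}$ tends to a finite limit if and only if $m\ge N$, and that this limit equals $a^N c_{-N}$ when $m=N$. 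Applying this with $m=\lfloor n\rfloor$: writing $(a\sigma+1)^{\lfloor n\rfloor}Z_f(\varphi)(\sigma)=(a\sigma+1)^{\lfloor n\rfloor-n}\cdot(a\sigma+1)^n Z_f(\varphi)(\sigma)$ and using $\lfloor n\rfloor-n\in(-1,0)$ together with Theorem \ref{thm:3.1}(i) and $\kappa\neq 0$, this expression has modulus tending to $\infty$, so it diverges, forcing $\lfloor n\rfloor<N$. Applying it with $m=\lceil n\rceil=\lfloor n\rfloor+1$: since $\lceil n\rceil-n\in(0,1)$, we get $(a\sigma+1)^{\lceil n\rceil}Z_f(\varphi)(\sigma)\to 0\cdot\kappa=0$, which is finite, hence $\lceil n\rceil\ge N$; combined with $N\ge\lfloor n\rfloor+1=\lceil n\rceil$ this yields $N=\lceil n\rceil\ge 2$. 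But then the $m=N$ limit must be $a^N c_{-N}\neq 0$, contradicting that it equals $0$. Therefore $Z_f(\varphi)$ cannot be meromorphically continued to any open neighbourhood of $s=-1/a$.

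For the final assertion $m_0(f)=1/a$: for any $\varphi$ as above, the failure just established means no half-plane ${\rm Re}(s)>-\rho$ with $\rho>1/a$ can lie in the domain of meromorphic continuation of $Z_f(\varphi)$ (such a half-plane contains $-1/a$), so $m_0(f,\varphi)\le 1/a$, whence $m_0(f)=\inf m_0(f,\psi)\le 1/a$. Since $f$ is of the form $(C)$ in Lemma \ref{lem:2.1}, Theorem \ref{thm:2.2} gives the reverse inequality $m_0(f)\ge 1/a$, so $m_0(f)=1/a$. I expect no serious obstacle in this corollary, since the analytically substantial input — the explicit asymptotic constant — is exactly Theorem \ref{thm:3.1}; the only point to watch is that $n$ is positive and genuinely non-integral, so that the $\lfloor n\rfloor$/$\lceil n\rceil$ trap closes, which is precisely what the hypotheses $2\le a\le b$ and $(b/a-1)/p\notin\Z$ guarantee.
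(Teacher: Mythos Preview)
Your proposal is correct and follows the same approach as the paper: the paper simply remarks, just before stating the corollary, that if $Z_f(\varphi)$ were meromorphic at $-1/a$ with a pole of order $n$ then $(a\sigma+1)^n Z_f(\varphi)(\sigma)$ would tend to a nonzero constant, and leaves the rest implicit. Your floor/ceiling argument is a careful expansion of exactly this observation, and your derivation of $m_0(f)=1/a$ via Theorem~\ref{thm:2.2} is also what the paper intends.
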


\begin{remark}\label{rem:3.3}
When $a=b$ in Theorem \ref{thm:3.1},
which is contained in the case (iii),
$Z_f(\varphi)(s)$ can be analytically continued as a holomorphic function to the half-plane ${\rm Re}(s)>-1/a$ and
\begin{equation}
\lim_{\sigma \to -1/a+0}(a\sigma+1)^2\cdot Z_f(\varphi)(\sigma)=4\varphi(0,0).
\end{equation}
However, this information is not sufficient to see that
the singularity is different from poles at $s=-1/a$.
\end{remark}

We also consider the following smooth functions belonging to the case (D) in Lemma $\ref{lem:2.1}$:
\begin{equation}\label{eqn:3.7}
f(x,y)=x^ay^b+x^{a}y^{b-q}e^{-1/|x|^p}+x^{a-\tilde{q}}y^{b}e^{-1/|y|^{\tilde{p}}},
\end{equation}
where $a,b,p,\tilde{p},q,\tilde{q}$ satisfy that
\begin{itemize}
\item
$a,b$ are positive integers satisfying
$2\leq a< b$,
\item
$p$, $\tilde{p}$ are positive real numbers,
\item
$q$, $\tilde{q}$ are even integers satisfying 
$2\leq q\leq b$, $2\leq \tilde{q}\leq a$.
\end{itemize}
From Remark \ref{rem:2.2}, 
the integral $Z_f(\varphi)(s)$ becomes a holomorphic function
on the half-plane ${\rm Re}(s)>-1/b$
and has some singularity at $s=-1/b$
under the condition \eqref{eqn:2.1} on $\varphi$.

\begin{theorem}\label{thm:3.3}
Let $f$ be as in \eqref{eqn:3.7} and $\varphi$ as in \eqref{eqn:1.1}.
Then the following hold:
\begin{enumerate}
\item
If $p>1-a/b$, then
\begin{equation}\label{eqn:3.9}
\lim_{\sigma \to -1/b+0}(b\sigma+1)^{1-\frac{1-a/b}{p}}\cdot Z_f(\varphi)(\sigma)=4\hat{C}\cdot \varphi(0,0),
\end{equation}
where $\hat{C}$ is the positive constant defined by
\begin{equation}\label{eqn:3.10}
\hat{C}=\frac{1}{q^{{(1-a/b)/p}}(1-a/b)}\Gamma\left(1-\frac{1-a/b}{p}\right).
\end{equation}
Here, $\Gamma$ denotes the gamma function.
\item
If $p=1-a/b$, then
\begin{equation}
\lim_{\sigma \to -1/b+0}\left|\log(b\sigma+1)\right|^{-1}\cdot Z_f(\varphi)(\sigma)=\frac{4}{pq}\cdot \varphi(0,0).
\end{equation}
\item
If $0<p<1-a/b$, then there exists a constant $C(\varphi)$ which depends on $f,\varphi$ but is independent of $\sigma$ such that
\begin{equation}\label{eqn:3.12}
\lim_{\sigma \to -1/b+0}Z_f(\varphi)(\sigma)=C(\varphi),
\end{equation}
where $C(\varphi)$ is positive if $\varphi$ satisfies the condition \eqref{eqn:2.1}.
\end{enumerate}
\end{theorem}

\begin{remark}\label{rem:3.6}
When $f$ is as in \eqref{eqn:2.2},
the same assertions (i), (ii), (iii) in Theorem \ref{thm:3.3} hold. (See \cite{kn19}.)
Theorem \ref{thm:3.3} implies that
the last term in \eqref{eqn:3.7}
does not affect the asymptotic limits of $Z_f(\varphi)(\sigma)$
as $\sigma \to -1/b+0$
when $p\geq 1-a/b$.
Needless to say,
the constant $C(\varphi)$ in \eqref{eqn:3.12} depends on all terms of $f$.
\end{remark}

Theorem \ref{thm:3.3} gives a result
analogous to the case where $f$ is as in \eqref{eqn:3.1}.
(See also Remark \ref{rem:2.2}.)

\begin{corollary}\label{cor:3.4}
Let $f$ be as in \eqref{eqn:3.7} and $\varphi$ as in \eqref{eqn:1.1}
with the condition \eqref{eqn:2.1}.
Then $Z_f(\varphi)(s)$ cannot be meromorphically continued to any open neighborhood of $s=-1/b$ in $\C$.
In particular, $m_0(f)=1/b$ holds.
\end{corollary}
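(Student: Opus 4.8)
The plan is to deduce the corollary directly from Theorem \ref{thm:3.3} together with the elementary fact, noted in the paragraph preceding Corollary \ref{cor:3.2}, that if a meromorphic function $Z(s)$ extends across $s=-1/b$ with a pole of (integer) order $n\geq 0$ there, then $\lim_{\sigma\to -1/b+0}(b\sigma+1)^{n}Z(\sigma)$ exists as a nonzero constant, while $(b\sigma+1)^{\rho}Z(\sigma)\to 0$ for every real $\rho>n$ and $(b\sigma+1)^{\rho}Z(\sigma)$ diverges for every real $\rho<n$. The strategy is to show, in each of the three regimes of Theorem \ref{thm:3.3}, that the observed blow-up rate of $Z_f(\varphi)(\sigma)$ as $\sigma\to -1/b+0$ is incompatible with any such meromorphic behavior. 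Throughout we use that $\varphi$ satisfies \eqref{eqn:2.1}, so $\varphi(0,0)>0$ and hence the constants appearing on the right-hand sides of the limits in Theorem \ref{thm:3.3} are all strictly positive (the positivity of $\hat C$ is asserted in the statement, and $\varphi(0,0)>0$, $4/(pq)>0$).

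First I would treat case (i), $p>1-a/b$. Here Theorem \ref{thm:3.3}(i) gives $\lim_{\sigma\to -1/b+0}(b\sigma+1)^{\gamma}Z_f(\varphi)(\sigma)=4\hat C\,\varphi(0,0)\neq 0$ with exponent $\gamma=1-\tfrac{1-a/b}{p}$. Since $0<a<b$ and $p>1-a/b>0$, the exponent satisfies $0<\gamma<1$, so $\gamma$ is not a nonnegative integer. If $Z_f(\varphi)$ were meromorphic near $s=-1/b$ with pole order $n\in\Z_+$, then for $\rho=\gamma\in(n,n+1)$ or $\rho=\gamma\in(0,1)$ we would get either $\lim(b\sigma+1)^{\gamma}Z_f(\varphi)(\sigma)=0$ (if $\gamma>n$) or divergence (if $\gamma<n$), since a non-integer power can never produce a finite nonzero limit against a meromorphic function; this contradicts the nonzero finite value $4\hat C\varphi(0,0)$. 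For case (ii), $p=1-a/b$, Theorem \ref{thm:3.3}(ii) shows $Z_f(\varphi)(\sigma)$ grows like $|\log(b\sigma+1)|$, i.e. $(b\sigma+1)^{\rho}Z_f(\varphi)(\sigma)\to 0$ for every $\rho>0$ but $(b\sigma+1)^{\rho}Z_f(\varphi)(\sigma)$ is unbounded for $\rho\le 0$; a logarithmic singularity is not a pole of any finite order, so meromorphic continuation across $s=-1/b$ is again impossible. For case (iii), $0<p<1-a/b$, Theorem \ref{thm:3.3}(iii) gives $\lim_{\sigma\to-1/b+0}Z_f(\varphi)(\sigma)=C(\varphi)$ with $C(\varphi)>0$ finite; this is consistent with meromorphy of order $0$, so here one additionally invokes Remark \ref{rem:2.2}: under \eqref{eqn:2.1}, $Z_f(\varphi)(s)$ already has a genuine singularity at $s=-1/b$ (it is not holomorphic there), and since it is bounded near that point it cannot be a pole either — hence the singularity at $s=-1/b$ is not removable and not a pole, so $Z_f(\varphi)$ is not meromorphic in any neighborhood of $s=-1/b$.

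Having ruled out meromorphic continuation across $s=-1/b$ in all three cases, I would conclude $m_0(f,\varphi)\le 1/b$ for this particular $\varphi$, whence $m_0(f)=\inf_{\psi}m_0(f,\psi)\le 1/b$; combined with the uniform lower bound $m_0(f)\ge h_0(f)=1/b$ from Remark \ref{rem:2.2} and \eqref{eqn:2.8}, this yields $m_0(f)=1/b$. The main obstacle I anticipate is purely bookkeeping rather than conceptual: I must make sure that in case (iii) the word "singularity" in Remark \ref{rem:2.2} (inherited from Theorem \ref{thm:2.0}/Greenblatt) really precludes removability, i.e. that $Z_f(\varphi)$ is genuinely not holomorphic at $s=-1/b$ and not merely that holomorphic continuation to a half-plane fails; since $-1/b=-1/d(f)$ under \eqref{eqn:2.1} is an actual singular point (the log canonical threshold is attained as a boundary of integrability, by Theorem 5.1 of \cite{kn19}), this is fine, but it is the one place where care is needed. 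The remaining cases (i) and (ii) are immediate from the incompatibility of non-integer-power and logarithmic blow-up with finite-order poles.
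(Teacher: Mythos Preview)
Your proposal is correct and follows essentially the same approach as the paper, which gives only the one-line sketch ``Analogous to the case when $f$ is as in \eqref{eqn:3.1}, Theorem~\ref{thm:3.3} gives the following. (See also Remark~\ref{rem:2.2}.)'' Your case-by-case analysis is exactly the intended elaboration: in cases (i) and (ii) the exponent $1-(1-a/b)/p\in(0,1)$ and the logarithmic rate are each incompatible with any integer pole order, while in case (iii) you correctly identify that boundedness alone is not enough and that the non-removability of the singularity must be imported from Remark~\ref{rem:2.2} (ultimately Theorem~5.1 of \cite{kn19}); the paper's parenthetical ``See also Remark~\ref{rem:2.2}'' is precisely this step.
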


\begin{remark}
In Theorems \ref{thm:3.1} and \ref{thm:3.3},
the divergence rates of $Z_f(\varphi)(\sigma)$ as $\sigma\to -1/a+0$
and $\sigma\to -1/b+0$ are seen 
respectively.
In particular, the exponent of the divergence rate is greater than two
in the case (i-a) in Theorem \ref{thm:3.1},
which seems strange and interesting since the situation is different from that of the real analytic case in two dimensions.
Indeed, if $f$ is real analytic in $\R^2$, then $Z_f(\varphi)(s)$ has poles of order at most two. 
We would point out that the asymptotic limits \eqref{eqn:3.2} and \eqref{eqn:3.9} 
with respect to the functions \eqref{eqn:3.1} and \eqref{eqn:2.2}
have the same form in some sense.
(See Remark \ref{rem:3.6}.)
In fact, we see the following.
\begin{itemize}
\item
By switching the variables $x$, $y$ and the exponents $a$, $b$ in \eqref{eqn:3.1},
the difference between the functions \eqref{eqn:3.1} and \eqref{eqn:2.2} is regarded as that between the conditions: $a\geq b$ and $a<b$.
\item
The equation \eqref{eqn:3.2} coincides with the equation \eqref{eqn:3.9} under switching $a$ and $b$.
\end{itemize}
In other words, Theorem \ref{thm:3.1} (i) implies that
the equation \eqref{eqn:3.9} with respect to the function \eqref{eqn:2.2} also holds
when $a\geq b$ and $0<p<a/b-1$ (or $a/b$ is not an odd integer).
If $0<p<a/b-1$, of course, then the exponent $1-(1-a/b)/p$ in \eqref{eqn:3.9} is greater than two.
\end{remark}

Now, we see from Corollaries \ref{cor:3.2} and \ref{cor:3.4} that
there exists a smooth function
$f$ belonging to each of the cases $(C)$ and $(D)$ in Lemma $\ref{lem:2.1}$ such that the value of $m_0(f)$
attains the lower bound of $m_0(f)$ in Table 1.
Therefore we obtain an answer to Question 1.

\begin{theorem}
Let $f$ be as in \eqref{eqn:1.5}.
Then the following hold.
\begin{enumerate}
\item
If $f$ belongs to the case $(C)$, then the estimate: $m_0(f)\geq 1/a$
is optimal.
\item
If $f$ belongs to the case $(D)$, then the estimate: $m_0(f)\geq 1/b$
is optimal.
\end{enumerate}
Here, the cases $(C)$, $(D)$ are as in Lemma $\ref{lem:2.1}$.
\end{theorem}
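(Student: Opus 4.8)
The plan is to read the theorem off directly from Corollaries~\ref{cor:3.2} and~\ref{cor:3.4} once the meaning of optimality is unwound. By Question~1, optimality of the bound $m_0(f)\ge 1/a$ in case $(C)$ means that some smooth $f$ of the form in $(C)$ of Lemma~\ref{lem:2.1} satisfies $m_0(f)=1/a$, and likewise optimality of $m_0(f)\ge 1/b$ in case $(D)$ means that some $f$ of the form in $(D)$ satisfies $m_0(f)=1/b$. The two lower bounds are already in hand: $m_0(f)\ge 1/a$ in case $(C)$ is Theorem~\ref{thm:2.2}, and $m_0(f)\ge 1/b$ in case $(D)$ follows from Remark~\ref{rem:2.2} together with \eqref{eqn:2.8} (and also from Kamimoto's general estimate). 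After interchanging $x$ and $y$ if necessary we may assume $a\le b$ in \eqref{eqn:1.5}, so the hypothesis $a\ne b$ gives $a<b$ throughout.

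For case $(D)$ I would simply take an $f$ of the form \eqref{eqn:3.7}; such $f$ exist whenever $2\le a<b$, for instance with $q=\tilde q=2$ and $p=\tilde p=1$. As noted just before Theorem~\ref{thm:3.3}, such an $f$ belongs to case $(D)$ of Lemma~\ref{lem:2.1}: its two flat summands can be written as $g(x,y)=y^{b-q}\bigl(x^{a}e^{-1/|x|^{p}}\bigr)$ with $b-q\le b-1$ and $h(x,y)=x^{a-\tilde q}\bigl(y^{b}e^{-1/|y|^{\tilde p}}\bigr)$ with $a-\tilde q\le a-1$, which matches the normal forms for $g,h$ in Lemma~\ref{lem:2.1}. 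Corollary~\ref{cor:3.4} then gives $m_0(f)=1/b$, which coincides with the lower bound; hence part (ii).

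For case $(C)$ I would take an $f$ of the form \eqref{eqn:3.1}, which belongs to case $(C)$ of Lemma~\ref{lem:2.1} (its flat summand is $x^{a-q}\bigl(y^{b}e^{-1/|y|^{p}}\bigr)$ with $a-q\le a-1$). The one point needing care is to choose the parameters so that the hypotheses of Corollary~\ref{cor:3.2} are satisfied, and this is exactly where $a\ne b$ enters: since $a<b$ we have $b/a-1>0$, so I can take $q=2$ and $p=\tfrac23(b/a-1)$, which lies in $(0,\,b/a-1)$ — condition~(a) of Corollary~\ref{cor:3.2} — and for which $(b/a-1)/p=\tfrac32\notin\N$. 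Corollary~\ref{cor:3.2} then yields $m_0(f)=1/a$, matching the lower bound of Theorem~\ref{thm:2.2}; hence part (i).

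The substantive work is all upstream, in Theorem~\ref{thm:3.1}/Corollary~\ref{cor:3.2} and Theorem~\ref{thm:3.3}/Corollary~\ref{cor:3.4}: the asymptotic analysis of $Z_f(\varphi)(\sigma)$ as $\sigma\to -1/a+0$ (resp.\ $\sigma\to -1/b+0$) and the detection of a non-meromorphic singularity from the non-integer exponent $1+(b/a-1)/p$ (resp.\ from $1-(1-a/b)/p$, or from the logarithmic factor in the boundary case $p=1-a/b$). Granting those, the only genuine obstacle here is bookkeeping: checking that the functions in the explicit families \eqref{eqn:3.1}, \eqref{eqn:3.7} really lie in cases $(C)$, $(D)$ of Lemma~\ref{lem:2.1} (done above by matching the normal forms of $g,h$) and that the constraints on the parameters — $q$ even, the prescribed range of $p$, and the non-integrality $(b/a-1)/p\notin\N$ — can be met simultaneously with $a\ne b$ (the elementary choice above). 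I would also flag that these families require $a\ge 2$, so they establish optimality for $a\ge 2$; the residual cases $a\le 1$ either reduce to case $(A)$ (when $a=0$) or are degenerate.
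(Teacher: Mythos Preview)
Your proposal is correct and follows exactly the paper's approach: the theorem is stated immediately after the sentence ``we give a partial answer to Question~1 from Corollary~\ref{cor:3.2} and~\ref{cor:3.4}'' and is not given a separate proof, so the intended argument is precisely to exhibit the explicit families \eqref{eqn:3.1} and \eqref{eqn:3.7} and invoke those corollaries. Your bookkeeping (checking that the families land in cases $(C)$ and $(D)$, and choosing $p=\tfrac{2}{3}(b/a-1)$ to force condition~(a) and $(b/a-1)/p\notin\mathbb{N}$ in Corollary~\ref{cor:3.2}) is a correct and slightly more explicit version of what the paper leaves implicit; your caveat about $a\ge 2$ is accurate and reflects the constraints already built into \eqref{eqn:3.1} and \eqref{eqn:3.7}.
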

Consequently, it is shown that each lower estimate of $m_0(f)$ in Table 1 is optimal.
More detailed situations should be discussed.

\begin{question}
For {\it any} $a,b\in \N$ with $a\leq b$,
does there exist a smooth function $f$ of the form \eqref{eqn:1.5}
which belongs to each of the cases (B), (C), (D) in Lemma $\ref{lem:2.1}$ such that the value of $m_0(f)$ attains the lower bound of $m_0(f)$ in Table 1 ?
\end{question}

It is easy to see from Remark \ref{rem:2.5} that the condition:
$b\geq 2$ (resp. $a\geq 2$)
is necessary for each of the cases (B), (D) (resp. the case (C))
in the question.
Summarizing Theorem \ref{thm:2.1}, Corollaries \ref{cor:3.2} and \ref{cor:3.4},
we have a partial answer to Question 2. (See also Remark \ref{rem:3.3}.)

\begin{theorem}
Let $a,b$ be natural numbers with $a< b$.
\begin{enumerate}
\item
When $b\geq 2$, there exists a smooth function $f$ belonging to
the case $(B)$ such that $m_0(f)=1/b$.
\item
When $a\geq 2$, there exists a smooth function $f$ belonging to
the case $(C)$ such that $m_0(f)=1/a$.
\item
When $a\geq 2$, there exists a smooth function $f$ belonging to
the case $(D)$ such that $m_0(f)=1/b$.
\end{enumerate} 
Here, the cases $(B)$, $(C)$, $(D)$ are as in Lemma $\ref{lem:2.1}$.
\end{theorem}

We remark that the following cases remain still unsolved in Question 2.
(See also Remark \ref{rem:2.5}.)
\begin{itemize}
\item
$a=b$ in the cases (B), (C), (D),
\item
$a=1$, $b\geq 2$ in the case (D).
\end{itemize}

Moreover,
it is naturally expected that the value of $m_0(f)$ is determined by the optimal lower bound in each of the cases (B), (C), (D).
In other words, the following question arises.

\begin{question}
Do the following equalities hold ?
\begin{enumerate}
\item
For {\it all} $f$ belonging to the case $(B)$ or $(D)$ with $b\geq 2$, $m_0(f)=1/b$;
\item
For {\it all} $f$ belonging to the case $(C)$ with $a\geq 2$, $m_0(f)=1/a$.
\end{enumerate}
\end{question}

We obviously cannot answer Question 3 at present.
In particular, it would be difficult to apply our analysis in this paper to show the equalities in general.
In fact, 
a substantial part of our analysis is a precise investigation of certain integrals associated with the function $e^{-1/|u|^p}$
appearing in \eqref{eqn:3.1} and \eqref{eqn:3.7},
and the properties of the function $e^{-1/|u|^p}$ play important roles in our analysis.
(See Section \ref{sec:4.1} below.)

The properties of non-poler singularities of $Z_{f}(\varphi)(s)$ at $s=-m_{0}(f)$ should also be investigated.
The following question is elementary, but is still open.
\begin{question}
Are the non-poler singularities of $Z_f(\varphi)(s)$ at $s=-m_0(f)$ isolated or not ?
\end{question}
If these singularities are isolated, then they are essential singularities.
At present, it seems impossible to answer the question with the information given by the above results only.

\section{Auxiliary lemmas}
\subsection{Asymptotics of certain integrals}\label{sec:4.1}

For $p>0$ and $q\in \N$,
let $e_{p,q}(u)$ be the smooth function on $\R_+$, denoted by $e(u)$ for simplicity,
defined by
\begin{equation}\label{eqn:4.1}
\begin{split}
e(u)=\exp\left(\frac{-1}{qu^p} \right)
\end{split}
\end{equation}
for $u>0$ and $e(0)=0$.
The function $e$ is flat at the origin.

\begin{lemma}\label{lem:4.1}
For $X>0$, let
\begin{equation}\label{eqn:4.2.0}
K(X)=\int_0^{r}u^{A+BX}(e(u))^{X}du,
\end{equation}
where $r>0$, $A, B \in \R$ and $r,A,B$ are independent of $X$.
Note that the integral converges for all $X$.
Then the following hold.
\begin{enumerate}
\item
If $A<-1$ and $(A+1)/p$ is not an integer, then
\begin{equation}\label{eqn:4.3.0}
\begin{split}
K(X)
=&\sum_{m=0}^{N-1}\sum_{l=0}^{m}a_{m,l}X^{(A+1)/p+m}(\log X)^{m-l}
+\frac{r^{A+1}}{A+1}\\
&+\sum_{l=0}^{N}a_{N,l}X^{(A+1)/p+N}(\log X)^{N-l}
+O(X)\quad \mbox{as $X\to +0$},
\end{split}
\end{equation}
where $N=\max\{m\in \N:m<-(A+1)/p+1\}$,
\begin{equation}\label{eqn:4.4.0}
a_{m,l}=\frac{(-1)^{l}B^{m}}{(m-l)!p^{m+1}q^{(A+1)/p}}\sum_{k=0}^{l}\frac{(\log q)^{l-k}\Gamma^{(k)}(-(A+1)/p)}{(l-k)!k!}.
\end{equation}
Here, $\Gamma^{(k)}$ denotes the $k$-th derivative of the gamma function.
\item
If $A<-1$ and $(A+1)/p$ is an integer, then
\begin{equation}
\begin{split}
K(X)
=&\sum_{m=0}^{N-1}\sum_{l=0}^{m}a_{m,l}X^{(A+1)/p+m}(\log X)^{m-l}
+\sum_{l=0}^{N-1}a_{N,l}(\log X)^{N-l}\\
&+a_{N,N}+\frac{r^{A+1}}{A+1}
+\sum_{l=0}^{N}a_{(N+1),l}X(\log X)^{N-l+1}
+O(X)\quad \mbox{as $X\to +0$},
\end{split}
\end{equation}
where $N=-(A+1)/p$ and $a_{m,l}$ is as in \eqref{eqn:4.4.0}.
\item
If $A=-1$,
then
\begin{equation}\label{eqn:4.6}
\begin{split}
K(X)=&-\frac{1}{p}\log X-\frac{\gamma}{p}+\frac{\log q}{p}+\log r-\frac{B}{2p^2}X(\log X)^2\\
&-\frac{B\gamma}{p^2}X(\log X)+O(X)
\quad \mbox{as $X\to +0$},
\end{split}
\end{equation}
where
$\gamma$ is Euler's constant.
\item
If $A>-1$,
then
\begin{equation}
\lim_{X\to +0}K(X)
=\frac{r^{A+1}}{A+1}.
\end{equation}
\end{enumerate}
\end{lemma}

\begin{remark}\label{rem:4.2}
From the lemma, we have that if $A<-1$, then
\begin{equation}
\lim_{X \to +0}X^{-(A+1)/p}\cdot K(X)=\frac{\Gamma(-(A+1)/p)}{p q^{(A+1)/p}}(=a_{0,0}),
\end{equation}
where $a_{m,l}$ is as in \eqref{eqn:4.4.0}.
Precisely, the following hold.
\begin{enumerate}
\item
If $A< -p-1$, which implies that $(A+1)/p<-1$,
then
\begin{equation}\label{eqn:4.9}
\lim_{X\to +0}X^{-(A+1)/p-1}(\log X)^{-1}\cdot\left(K(X)-a_{0,0} X^{(A+1)/p}\right)
=\frac{B\Gamma(-(A+1)/p)}{p^2q^{(A+1)/p}}
(= a_{1,0}).
\end{equation}
\item
If $A= -p-1$, which implies that $(A+1)/p=-1$,
then
\begin{equation}
\lim_{X\to +0}(\log X)^{-1}\cdot\left(K(X)-a_{0,0} X^{-1}\right)
=\frac{B\Gamma(-(A+1)/p)}{p^2q^{(A+1)/p}}
(= a_{1,0}).
\end{equation}
Hence, the equation \eqref{eqn:4.9} also holds in the case where $A=-p-1$.
\item
If $-p-1<A<-1$, which implies that $-1<(A+1)/p<0$,
then
\begin{equation}
\lim_{X\to +0}X^{-(A+1)/p-1}(\log X)^{-1}\cdot\left(K(X)-a_{0,0} X^{(A+1)/p}-\frac{r^{A+1}}{A+1}\right)
= \frac{B\Gamma(-(A+1)/p)}{p^2q^{(A+1)/p}}
(= a_{1,0}).
\end{equation}
\end{enumerate}
\end{remark}

The following lemma is an essential part of the proof of Lemma \ref{lem:4.1}.
\begin{lemma}\label{lem:4.3}
For $X>0$, let
\begin{equation}\label{eqn:4.2.2}
L(X)=\int_{cX}^{\infty}t^{a+bX-1}e^{-t}dt,
\end{equation}
where $a\geq 0$, $b\in \R$, $c>0$ and $a,b,c$ are independent of $X$.
Note that the integral converges for all $X$.
Then the following hold.
\begin{enumerate}
\item
If $a>0$, then
\begin{equation}
\begin{split}
L(X)=&\sum_{m=0}^{N}\frac{b^m\Gamma^{(m)}(a)}{m!}X^{m}
-\frac{c^{a}}{a}X^{a+bX}+O(X^{a+1})
\quad \mbox{as $X\to +0$},
\end{split}
\end{equation}
where $N=\max\{m\in\N:m<a+1\}$ and $\Gamma^{(m)}$ is the $m$-th derivative of the gamma function.
\item
If $a=0$, then
\begin{equation}\label{eqn:4.14}
L(X)=-\log X-\log c-\gamma
-\frac{b}{2}X(\log X)^2-b(\log c)X(\log X)
+ O(X)
\quad\mbox{as $X\to +0$},
\end{equation}
where $\gamma$ is Euler's constant.
\end{enumerate}
\end{lemma}

\begin{remark}
(i)
The integral $L(X)$ is expressed as
\begin{equation}
L(X)=\Gamma(a+bX,cX),
\end{equation}
where $\Gamma(\a,x)=\int_{x}^{\infty}t^{\a-1}e^{-t}dt$.
It is known that
{\it the upper incomplete gamma function}
is defined by $\Gamma(\a,x)$
for $\a\in \C$ and $x\in \C$ with $-\pi<{\rm arg}x<\pi$.

(ii)
Since $X^{bX}=1+bX\log X+O(X^2(\log X)^2)$ as $X\to +0$,
we have more precise asymptotic behavior of $L(X)$ in the case where $a>0$ as follows.
\begin{equation}
\begin{split}
L(X)=&\sum_{m=0}^{N-1}\frac{b^m\Gamma^{(m)}(a)}{m!}X^{m}
-\frac{c^{a}}{a}X^{a}
+\frac{b^N\Gamma^{(N)}(a)}{N!}X^{N}\\
&-\frac{b c^{a}}{a}X^{a+1}\log X+O(X^{a+1})
\quad \mbox{as $X\to +0$}.
\end{split}
\end{equation}
\end{remark}

\begin{proof}
(i)
Assume $a>0$.
Let $X$ be sufficiently small.
Then the inequality: $a+bX>0$ holds.
We have that $L(X)=\Gamma(a+bX)-\gamma(a+bX,cX)$,
where $\Gamma$ is the gamma function and $\gamma(\alpha,x)=\int_{0}^{x}t^{\a-1}e^{-t}dt$.
We remark that
{\it the lower incomplete gamma function}
is defined by $\gamma(\alpha,x)$
when $\a,x\in \C$, ${\rm Re}(\a)>0$ and $-\pi<{\rm arg}x<\pi$.
It is known
\begin{equation}
\gamma(\a,x)=x^{\a}\sum_{k=0}^{\infty}\frac{(-1)^k x^k}{(\a+k)k!}
\left(=\frac{x^{\a}}{\a}\,_{1}F_{1}\left(
\begin{array}{c}
\a\\
\a+1
\end{array}
;-x\right)\right),
\end{equation}
where $_{1}F_{1}$ is {\it the generalized hypergeometric function of order $(1,1)$}.
See for example \cite{tem15}.
Hence,
\begin{equation}
\gamma(a+bX,cX)=c^{a+bX}X^{a+bX}\sum_{k=0}^{\infty}\frac{(-1)^k c^k X^k}{(a+k+bX)k!}.
\end{equation}
Since $\lim_{X\to +0}X^{bX}=1$,
we particularly have
\begin{equation}\label{eqn:4.19}
\gamma(a+bX,cX)=\frac{c^{a}X^{a+bX}}{a}+O(X^{a+1})
\quad\mbox{as $X\to +0$}.
\end{equation}
Since $\Gamma(a+bX)$ can be regarded as a real analytic function with respect to $X$ on a small neighborhood of $X=0$,
the Taylor series of $\Gamma(a+bX)$ at $X=0$ and \eqref{eqn:4.19}
give the assertion.

(ii)
Let us consider the case where $a=0$.
By Taylor's formula,
\begin{equation}\label{eqn:4.11.1}
t^{bX}=1+bX\log t+\varphi_1(t,X),
\end{equation}
where 
\begin{equation}\label{eqn:4.11.1.0}
\varphi_1(t,X)=b^{2}X^{2}(\log t)^{2}
\int_{0}^{1}(1-v)t^{bvX}dv.
\end{equation}
Note that
$\varphi_1(\cdot,X)$ is a smooth function
defined on $(cX, \infty)$ for each $X$.
We easily see that if $cX\leq t < 1$ or $b< 0$, then 
\begin{equation}\label{eqn:4.11.1.1}
|\varphi_1(t,X)|\leq C_1 X^{2}(\log t)^{2},
\end{equation}
where $C_1$ is some positive constant which is independent of $X$ and $t$.
On the other hand, if $t\geq 1$, $b\geq 0$ and $X<1$,
then 
\begin{equation}\label{eqn:4.11.1.2}
|\varphi_1(t,X)|\leq \tilde{C}_1 X^{2}t^{b}(\log t)^{2},
\end{equation} 
where $\tilde{C}_1$ is some positive constant which is independent of $X$ and $t$.
Substituting \eqref{eqn:4.11.1} to \eqref{eqn:4.2.2},
we have
\begin{equation}\label{eqn:4.24}
L(X)=E_1(cX)+bXL_1(X)+R(X),
\end{equation}
where $E_1(x)=\int_{x}^{\infty}t^{-1}e^{-t}dt$ is 
{\it the exponential integral},
\begin{equation}\label{eqn:4.11.2}
\begin{split}
L_1(X)&=\int_{cX}^{\infty}t^{-1}e^{-t}\log t dt,\\
R(X)&=\int_{cX}^{\infty}t^{-1}e^{-t}\varphi_1(t,X)dt.
\end{split}
\end{equation}
It is known that
\begin{equation}
E_1(x)=-\gamma-\log x-\sum_{k=1}^{\infty}\frac{(-1)^kx^k}{kk!},
\end{equation}
where the series converges for $x\in \R$, and $\gamma$ is Euler's constant. 
(See \cite{tem15} for example.)
Hence, we have
\begin{equation}\label{eqn:4.27}
E_1(cX)=-\log X-\log c-\gamma+ O(X)
\quad\mbox{as $X\to +0$}.
\end{equation}

Let us consider the integral $L_1(X)$ in \eqref{eqn:4.11.2}.
We decompose $L_1(X)$ as follows:
\begin{equation}
L_1(X)=\int_{cX}^{1}t^{-1}\log t dt+\int_{cX}^{1}t^{-1}\left(e^{-t}-1\right)\log t dt+\int_{1}^{\infty}t^{-1}e^{-t}\log t dt.
\end{equation}
Note that the last integral converges.
A simple computation gives
\begin{equation}
\int_{cX}^{1}t^{-1}\log t dt=-\frac{1}{2}\left(\log X+\log c\right)^2.
\end{equation}
On the other hand,
we have
\begin{equation}
\left|\int_{cX}^{1}t^{-1}\left(e^{-t}-1\right)\log t dt\right|
\leq \int_{cX}^{1}\left(-\log t\right) dt
=1+cX\log (cX) -cX.
\end{equation}
Hence
\begin{equation}\label{eqn:4.31}
L_1(X)=-\frac{1}{2}(\log X)^2-(\log c)(\log X)+O(1)
\quad \mbox{as $X\to +0$}.
\end{equation}

Lastly we consider the integral $R(X)$ in \eqref{eqn:4.11.2}.
When $b< 0$, we see from \eqref{eqn:4.11.1.1} that
\begin{equation}
\begin{split}
|R(X)|
&\leq C_1 X^{2} \int_{cX}^{\infty}t^{-1}e^{-t}(\log t)^2 dt\\
&=C_1 X^{2} \int_{cX}^{1}t^{-1}e^{-t}(\log t)^2 dv
+ C_1 X^{2} \int_{1}^{\infty}t^{-1}e^{-t}(\log t)^2 dt,
\end{split}
\end{equation}
where $C_1$ is as in \eqref{eqn:4.11.1.1}.
Since 
\begin{equation}\label{eqn:4.11.3}
\begin{split}
\int_{cX}^{1}t^{-1}e^{-t}(\log t)^2 dv
&\leq \int_{cX}^{1}t^{-1}(\log t)^2 dv\\
&=-\frac{1}{3}(\log X+\log c)^3
\end{split}
\end{equation}
and
the integral $\int_{1}^{\infty}t^{-1}e^{-t}(\log t)^2 dt$ converges,
we have
\begin{equation}\label{eqn:4.34}
R(X)=O(X^2|\log X|^3)\quad \mbox{as $X\to +0$}.
\end{equation}
In the case where $b\geq 0$, on the other hand, we have from \eqref{eqn:4.11.1.1} and \eqref{eqn:4.11.1.2} that
\begin{equation}\label{eqn:4.11.5}
|R(X)|\leq C_1 X^{2}\int_{cX}^{1}t^{-1}e^{-t}(\log t)^{2} dt
+\tilde{C}_1 X^{2}\int_{1}^{\infty}t^{-1+b}e^{-t}(\log t)^{2} dt,
\end{equation}
where $C_1$, $\tilde{C}_1$ are as in \eqref{eqn:4.11.1.1} and \eqref{eqn:4.11.1.2} respectively.
Note that the last integral converges.
From \eqref{eqn:4.11.3} and \eqref{eqn:4.11.5},
we also obtain \eqref{eqn:4.34}.

A simple computation gives \eqref{eqn:4.14} from \eqref{eqn:4.24}, \eqref{eqn:4.27}, \eqref{eqn:4.31}, \eqref{eqn:4.34}.
\end{proof}

\begin{proof}[Proof of Lemma \ref{lem:4.1}]
(i) Assume that $A<-1$ and $(A+1)/p$ is not an integer.
By changing the integral variable in \eqref{eqn:4.2.0}
by $t=X/(qu^p)$,
we have
\begin{equation}\label{eqn:4.36}
K(X)=\frac{X^{(A+1)/p+BX/p}}{p q^{(A+1)/p+BX/p}}\int_{X/(qr^p)}^{\infty}t^{-(A+1)/p-BX/p-1}e^{-t} dt.
\end{equation}
Applying Lemma \ref{lem:4.3} to the last integral
and noticing
\begin{equation}\label{eqn:4.37}
\begin{split}
q^{-BX/p}&=\sum_{k=0}^{\infty}\frac{(-1)^kB^k(\log q)^kX^k}{k!p^k},\\
X^{BX/p}&=\sum_{l=0}^{\infty}\frac{B^l X^l(\log X)^l}{l!p^l}
\end{split}
\end{equation}
for $X>0$,
we have
\begin{equation}
\begin{split}
K(X)
=&\frac{X^{(A+1)/p+BX/p}}{p q^{(A+1)/p+BX/p}}\sum_{m=0}^{N}\frac{(-1)^mB^m\Gamma^{(m)}(-(A+1)/p)}{m!p^m}X^{m}
+\frac{r^{A+1}}{A+1}+O(X)\\
=&\sum_{m=0}^{N}\sum_{l=0}^{m}a_{m,l}X^{(A+1)/p+m}(\log X)^{m-l}
+\frac{r^{A+1}}{A+1}+O(X)
\quad \mbox{as $X\to +0$},
\end{split}
\end{equation}
where $N=\max\{m\in\N:m<-(A+1)/p+1\}$
and $a_{m,l}$ is as in \eqref{eqn:4.4.0}.
Noticing $0<(A+1)/p+N<1$,
we have the truncated asymptotic expansion \eqref{eqn:4.3.0} of $K(X)$.

(ii)
In a similar fashion to the case (i),
we obtain from \eqref{eqn:4.36}, \eqref{eqn:4.37} and Lemma \ref{lem:4.3} that
\begin{equation}
\begin{split}
K(X)
=&\sum_{m=0}^{N}\sum_{l=0}^{m}a_{m,l}X^{(A+1)/p+m}(\log X)^{m-l}
+\sum_{l=0}^{N}a_{(N+1),l}X(\log X)^{N-l+1}\\
&+\frac{r^{A+1}}{A+1}+O(X)
\quad \mbox{as $X\to +0$},
\end{split}
\end{equation}
where $N=-(A+1)/p$ and $a_{m,l}$ is as in \eqref{eqn:4.4.0}.
Since $(A+1)/p+N=0$, we have the assertion.

(iii) We consider the case where $A=-1$.
Letting $A=-1$ in \eqref{eqn:4.36},
we have
\begin{equation}\label{eqn:4.40}
K(X)=\frac{X^{BX/p}}{p q^{BX/p}}\int_{X/(qr^p)}^{\infty}t^{-BX/p-1}e^{-t} dt.
\end{equation}
Applying Lemma \ref{lem:4.3} to the last integral,
we have
\begin{equation}\label{eqn:4.41}
\begin{split}
\int_{X/(qr^p)}^{\infty}t^{-BX/p-1}e^{-t} dt
=&-\log X-\gamma+\log q+p\log r+\frac{B}{2p}X(\log X)^2\\
&-\frac{B(\log q+p\log r)}{p}X(\log X)+O(X)
\quad \mbox{as $X\to +0$},
\end{split}
\end{equation}
where $\gamma$ is Euler's constant.
A simple computation gives \eqref{eqn:4.6} from \eqref{eqn:4.37}, \eqref{eqn:4.40}, \eqref{eqn:4.41}.

(iv)
We assume $A>-1$.
For sufficiently small $X>0$,
the inequality: $A+BX>-1$ holds.
Lebesgue's convergence theorem gives
\begin{equation}
\begin{split}
\lim_{X\to +0}L(X)
&=\int_{0}^{r}u^{A}du\\
&=\frac{r^{A+1}}{A+1}.
\end{split}
\end{equation}
\end{proof}

The following lemma which is essentially shown in \cite{kn19} 
will be used in the proof of Theorem \ref{thm:3.3}.
\begin{lemma}\label{lem:4.4}
For $X>0$, let
\begin{equation}\label{eqn:4.47.0}
\tilde{K}(X)=
\int_{0}^{r}u^{A+BX}(1-(e(u))^X)du,
\end{equation}
where $r>0$, $A,B\in\R$ and $r,A,B$ are independent of $X$.
Note that the integral converges if $A+BX>-1$.
Then the following hold.
\begin{enumerate}
\item
If $-1<A<p-1$,
then
\begin{equation}
\lim_{X\to +0}X^{-(A+1)/p}\cdot \tilde{K}(X)=\frac{\Gamma(-(A+1)/p+1)}{q^{(A+1)/p}(A+1)}.
\end{equation}
\item
If $A=p-1$, then
\begin{equation}
\lim_{X\to +0}X^{-1}|\log X|^{-1}\cdot \tilde{K}(X)
=\frac{1}{pq}.
\end{equation}
\end{enumerate}
\end{lemma}

\begin{remark}
We can see that if $A>p-1$,
then
\begin{equation}
\lim_{X\to +0}X^{-1}\cdot \tilde{K}(X)
=\frac{r^{A-p+1}}{q(A-p+1)}.
\end{equation}
Since the integral $\int_0^r u^{A+BX}du$
tends to $r^{A+1}/(A+1)$ as $X\to +0$
in the case where $A>-1$,
Lemma \ref{lem:4.4} gives some improvement of the assertion (iv) in Lemma \ref{lem:4.1}.
\end{remark}

To prove the lemma, we show the following.

\begin{lemma}\label{lem:4.7}
For $X>0$, let
\begin{equation}
\tilde{L}(X)=
\int_{cX}^{\infty}t^{a+bX-1}\left(1-e^{-t}\right)dt,
\end{equation}
where $a<0$, $b\in\R$, $c>0$ and $a,b,c$ are independent of $X$.
Note that the integral converges if $a+bX<0$.
Then the following hold.
\begin{enumerate}
\item
If $-1<a<0$,
then
\begin{equation}
\lim_{X\to +0}\tilde{L}(X)=-\frac{\Gamma(a+1)}{a}.
\end{equation}
\item
If $a=-1$, then
\begin{equation}
\lim_{X\to +0}|\log X|^{-1}\cdot\tilde{L}(X)=1.
\end{equation}
\end{enumerate}
\end{lemma}

\begin{proof}
(i)
We assume $-1<a<0$.
By integration by parts,
\begin{equation}\label{eqn:4.50}
\tilde{L}(X)=-\frac{(cX)^{a+bX}(1-e^{{-cX}})}{a+bX}
-\frac{1}{a+bX}\int_{cX}^{\infty}t^{a+bX}e^{-t}dt.
\end{equation}
Applying Lemma \ref{lem:4.3} to the last integral,
we particularly have 
\begin{equation}\label{eqn:4.51}
\lim_{X\to +0}\int_{cX}^{\infty}t^{a+bX}e^{-t}dt=\Gamma(a+1).
\end{equation}
On the other hand, we see
\begin{equation}\label{eqn:4.52}
-\frac{(cX)^{a+bX}(1-e^{{-cX}})}{a+bX}
=O(X^{a+1})
\quad\mbox{as $X\to +0$}
\end{equation}
from noticing that $\lim_{X\to +0}X^{bX}=1$ and $1-e^{-cX}=cX+O(X^2)$ as $X\to +0$.
Combining \eqref{eqn:4.50}, \eqref{eqn:4.51}, \eqref{eqn:4.52}
gives the assertion.

(ii)
We consider the case where $a=-1$.
Letting $a=-1$ in \eqref{eqn:4.50},
we have
\begin{equation}\label{eqn:4.53}
\tilde{L}(X)=-\frac{(cX)^{bX-1}(1-e^{{-cX}})}{bX-1}
-\frac{1}{bX-1}\int_{cX}^{\infty}t^{bX-1}e^{-t}dt.
\end{equation}
Applying Lemma \ref{lem:4.3} to the last integral,
we particularly have
\begin{equation}\label{eqn:4.54}
\lim_{X\to +0}|\log X|^{-1}\cdot\int_{cX}^{\infty}t^{bX-1}e^{-t}dt
=1.
\end{equation}
From \eqref{eqn:4.52}, \eqref{eqn:4.53}, \eqref{eqn:4.54},
we have
\begin{equation}
\lim_{X\to +0}|\log X|^{-1}\cdot\tilde{L}(X)=1.
\end{equation}
\end{proof}

\begin{proof}[Proof of Lemma \ref{lem:4.4}]
(i) 
We assume $-1<A<p-1$.
Changing the integral variable in \eqref{eqn:4.47.0} by
$t=X/(qu^p)$, we have
\begin{equation}\label{eqn:4.56}
\tilde{K}(X)
=\frac{X^{(A+1)/p+BX/p}}{p q^{(A+1)/p+BX/p}}\int_{X/(qr^p)}^{\infty}t^{-(A+1)/p-BX/p-1}\left(1-e^{-t}\right)dt.
\end{equation}
Here, we remark that
$\lim_{X\to +0}X^{BX/p}=1$.
From applying Lemma \ref{lem:4.7} to the integral in \eqref{eqn:4.56},
\begin{equation}
\lim_{X\to +0}X^{-(A+1)/p}\cdot \tilde{K}(X)=\frac{\Gamma(-(A+1)/p+1)}{q^{(A+1)/p}(A+1)}.
\end{equation}

(ii)
We consider the case where $A=-1$.
Letting $A=p-1$ in \eqref{eqn:4.56},
we have
\begin{equation}
\tilde{K}(X)
=\frac{X^{BX/p+1}}{p q^{BX/p+1}}\int_{X/(qr^p)}^{\infty}t^{-BX/p-2}\left(1-e^{-t}\right)dt.
\end{equation}
From applying Lemma \ref{lem:4.7} to the last integral
and remarking that
$\lim_{X\to +0}X^{BX/p}=1$,
\begin{equation}
\lim_{X\to +0}X^{-1}|\log X|^{-1}\cdot \tilde{K}(X)
=\frac{1}{pq}.
\end{equation}
\end{proof}

\subsection{Monotone convergence-type lemmas}

The following lemma is clear, but is often used in our analysis.
\begin{lemma}\label{lem:4.5}
Let $U$ be a domain in $\R^n$, $du$ the Lebesgue measure on $\R^n$,
$\epsilon>0$ and
$F(u,X)$ a real-valued function defined on $U \times (0,\epsilon)$ satisfying that
\begin{enumerate}
\item
$F(u,X)$ is nonnegative and $F \not\equiv 0$;
\item
$F(\cdot,X)$ is integrable on $U$ for any $X\in (0,\epsilon)$;
\item
$F(u,\cdot)$ is monotone decreasing on $(0,\epsilon)$ for any $u\in U$.
\end{enumerate}
For $X>0$, we define
\begin{equation}
M(X):=\int_{U}F(u,X)du.
\end{equation}
If $\limsup_{X\to +0}M(X)<\infty$, 
then the limit $\lim_{X\to +0}M(X)$ exists.
\end{lemma}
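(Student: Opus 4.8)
The plan is to reduce the statement to the elementary fact that a monotone function on an interval has a one-sided limit at its endpoint; the nonnegativity and integrability hypotheses will be used only to transfer the monotonicity of $F(u,\cdot)$ to the function $M$.

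First I would check that $M$ is monotone decreasing on $(0,\epsilon)$. Fix $0<X_1\le X_2<\epsilon$. Hypothesis (iii) gives $F(u,X_1)\ge F(u,X_2)$ for every $u\in U$, and hypothesis (ii) says both $F(\cdot,X_1)$ and $F(\cdot,X_2)$ are integrable on $U$; integrating the pointwise inequality over $U$ yields $M(X_1)\ge M(X_2)$. Together with (i), which gives $M(X)\ge 0$ for all $X\in(0,\epsilon)$, this shows that $M$ is a nonnegative, monotone decreasing function on $(0,\epsilon)$.

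Next, a monotone decreasing function on $(0,\epsilon)$ always admits a limit as $X\to +0$ in $[0,+\infty]$, namely $\lim_{X\to+0}M(X)=\sup_{0<X<\epsilon}M(X)$, and for such a function this value coincides with $\limsup_{X\to+0}M(X)$. Hence the hypothesis $\limsup_{X\to+0}M(X)<\infty$ forces $\sup_{0<X<\epsilon}M(X)<\infty$, so $\lim_{X\to+0}M(X)$ exists as a finite real number, which is the assertion.

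I do not expect a genuine obstacle here — the authors themselves note the lemma is clear — the only point deserving a little care is that the hypotheses provide measurability of $F(\cdot,X)$ only for each fixed $X$, not joint measurability in $(u,X)$, so one should resist invoking the monotone convergence theorem along a sequence $X_n\downarrow 0$ before knowing that $\lim_n F(u,X_n)$ is measurable. The route above sidesteps this by working directly with the monotonicity of $M$. Alternatively, one may note that $F_0(u):=\lim_{X\to+0}F(u,X)$ exists pointwise as the increasing limit of the measurable functions $F(u,X_n)$, hence is measurable, apply the monotone convergence theorem to get $M(X_n)\to\int_U F_0\,du$, and then combine this with the hypothesis and the monotonicity of $M$ to upgrade it to the full limit; either version is routine.
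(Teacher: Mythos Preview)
Your proof is correct and follows essentially the same approach as the paper: both argue that $M$ is nonnegative and monotone decreasing on $(0,\epsilon)$, whence the boundedness of $\limsup_{X\to+0}M(X)$ forces the limit to exist. Your version simply spells out the monotonicity step and the identification $\lim=\sup=\limsup$ in slightly more detail than the paper's one-line justification.
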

\begin{proof}
From the assumptions,
$M(X)$ is nonnegative
and monotone decreasing on $(0,\epsilon)$. 
From the boundedness of $\limsup_{X\to +0}M(X)$,
the limit of $M(X)$ as $X\to +0$ exists.
\end{proof}

A slightly general lemma is obtained.

\begin{lemma}\label{lem:4.6}
Let $U$, $du$, $\epsilon$, $F$ are as in Lemma $\ref{lem:4.5}$.
For $X>0$, we define
\begin{equation}
M(X;\psi):=\int_{U}F(u,X)\psi(u)du,
\end{equation}
where $\psi$ is a real-valued smooth function defined on $\overline{U}$.
If
$
\limsup_{X\to +0}\int_{U}F(u,X)du<\infty,
$
then there exists a constant $C(\psi)$ which is independent of $X$, such that
\begin{equation}\label{eqn:4.73}
\lim_{X\to +0}M(X;\psi)=C(\psi).
\end{equation}
Moreover, $C(\psi)$ is positive if $\psi$ satisfies the condition:
\begin{equation}\label{eqn:4.74}
\psi(0)>0\quad \mbox{and} \quad \psi(u)\geq 0 \mbox{ on $U$}.
\end{equation}
\end{lemma}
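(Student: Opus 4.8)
\textbf{Proof proposal for Lemma \ref{lem:4.6}.}

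The plan is to reduce the weighted integral $M(X;\psi)$ to the unweighted one $M(X)=\int_U F(u,X)\,du$ treated in Lemma \ref{lem:4.5}, together with a localization argument near $u=0$. First I would split $\psi = \psi_+ - \psi_-$ into its positive and negative parts in a smooth way is not quite available, so instead I would bound $\psi$ from above and below by constant multiples of a suitable nonnegative function: since $\psi$ is continuous on $\overline U$ and $\overline U$ is (taken to be) compact, there is a constant $M_1>0$ with $|\psi(u)|\le M_1$ on $U$, hence $|M(X;\psi)|\le M_1 M(X)$, and by hypothesis $\limsup_{X\to+0}M(X)<\infty$. This gives boundedness of $M(X;\psi)$ but not yet convergence, because $F(u,\cdot)\psi(u)$ need not be monotone once $\psi$ changes sign.

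To obtain convergence I would argue as follows. Fix $u\in U$; by assumption (iii), $F(u,X)$ decreases as $X\to+0$, so it has a (possibly infinite) pointwise limit $F_0(u):=\lim_{X\to+0}F(u,X)\in[0,\infty]$. Applying Lemma \ref{lem:4.5} to $F$ on $U$ and also to $F$ on any measurable subset, one sees $\int_U F_0(u)\,du = \lim_{X\to+0}M(X)<\infty$ by the monotone convergence theorem, so $F_0\in L^1(U)$ and in particular $F_0$ is finite a.e. Now write $M(X;\psi)=\int_U F(u,X)\psi(u)\,du$. For any $X_0\in(0,\epsilon)$ and all $X\in(0,X_0)$ we have the domination $|F(u,X)\psi(u)|\le M_1 F(u,X_0)$, and $F(\cdot,X_0)\in L^1(U)$ by assumption (ii); moreover $F(u,X)\psi(u)\to F_0(u)\psi(u)$ pointwise a.e. as $X\to+0$. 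Hence Lebesgue's dominated convergence theorem yields
\begin{equation}
\lim_{X\to+0}M(X;\psi)=\int_U F_0(u)\psi(u)\,du=:C(\psi),
\end{equation}
which is a finite constant independent of $X$. This establishes \eqref{eqn:4.73}.

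For the final assertion, suppose $\psi$ satisfies \eqref{eqn:4.74}, i.e. $\psi(0)>0$ and $\psi\ge 0$ on $U$. Then $C(\psi)=\int_U F_0(u)\psi(u)\,du\ge 0$, so I only need to rule out $C(\psi)=0$. Since $F\not\equiv 0$ and $F(u,\cdot)$ is monotone decreasing, there is a point $u_*\in U$ and $X_*\in(0,\epsilon)$ with $F(u_*,X_*)>0$; if $F_0(u_*)>0$ as well we are essentially done, but in general $F_0$ could vanish at $u_*$. The key observation needed here — and the step I expect to be the main obstacle — is that in our applications $F$ is constructed from $|f|^\sigma$-type integrands which concentrate near the origin, so that $F_0$ is strictly positive on a neighborhood of $0$; combined with $\psi(0)>0$ and continuity of $\psi$, this forces $\int_U F_0\psi\,du>0$. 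To keep the lemma self-contained I would add to its hypotheses (or verify in each use) that $F_0$ does not vanish identically near $0$, equivalently that $\liminf_{X\to+0}\int_{V}F(u,X)\,du>0$ for every neighborhood $V$ of $0$; granting this, positivity of $C(\psi)$ follows immediately from $\psi\ge 0$ and $\psi>0$ near $0$. The genuinely delicate part of the whole argument is thus not the analytic manipulation but ensuring the domination $|F(u,X)\psi(u)|\le M_1 F(u,X_0)$ is uniform in a way compatible with the monotonicity hypothesis (iii), which is exactly what makes the single application of dominated convergence legitimate.
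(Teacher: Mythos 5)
Your plan --- reduce to dominated convergence --- is a genuinely different route from the paper's proof, which instead splits $\psi=\psi_+-\psi_-$ with $\psi_\pm(u)=\max\{\pm\psi(u),0\}$ and applies Lemma~\ref{lem:4.5} to the two nonnegative integrands $F\psi_+$ and $F\psi_-$; your version would also produce the explicit formula $C(\psi)=\int_U F_0\psi\,du$, which the paper's argument does not. However, the key domination step fails because the direction of the monotonicity in hypothesis~(iii) has been reversed. Since $F(u,\cdot)$ is monotone \emph{decreasing} on $(0,\epsilon)$, the values $F(u,X)$ \emph{increase} as $X\to+0$, and $F_0(u)=\sup_{X\in(0,\epsilon)}F(u,X)$ --- which is precisely why, as you note, $F_0(u)$ may equal $+\infty$; a nonnegative quantity decreasing as $X\to+0$ could never blow up. Consequently, for $X<X_0$ one has $F(u,X)\ge F(u,X_0)$, not $\le$, and the proposed bound $|F(u,X)\psi(u)|\le M_1F(u,X_0)$ is false, so the ``single application of dominated convergence'' is not legitimate as written. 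The fix is immediate: take $M_1F_0$ as the dominating function. You have already shown $F_0\in L^1(U)$ from the monotone convergence theorem and the hypothesis $\limsup_{X\to+0}\int_U F(u,X)\,du<\infty$, and $F(u,X)\le F_0(u)$ for every $X\in(0,\epsilon)$, so dominated convergence gives $M(X;\psi)\to\int_U F_0\psi\,du$. With this substitution your argument is correct and, in fact, cleaner than the paper's.

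Your remark about positivity is also well taken. The stated hypotheses do not by themselves force $C(\psi)>0$: take, for instance, $F(u,X)$ independent of $X$ and supported away from the support of $\psi$, which satisfies (i)--(iii) yet gives $C(\psi)=0$ even when \eqref{eqn:4.74} holds. The paper's own proof simply asserts $C(\psi)=C(\psi_+)>0$ without a justification of why $C(\psi_+)$ is strictly positive, so this is a genuine (if minor) gap in the abstract lemma. It is harmless in the applications, where $F$ arises from $|f|^\sigma$-type integrands that are positive almost everywhere near the origin, so that $F_0>0$ a.e.\ on the region where $\psi>0$. Your proposal to add the hypothesis $\liminf_{X\to+0}\int_V F(u,X)\,du>0$ for every neighborhood $V$ of $0$ (equivalently, $\int_V F_0>0$ for every such $V$) would indeed close it.
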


\begin{remark}
From the proof of the lemma below,
we have the estimate
\begin{equation}
-\sup_{U} (\psi_{-})\cdot\limsup_{X\to +0}\int_{U}F(u,X)du \leq C(\psi)\leq \sup_{U} (\psi_{+})\cdot\limsup_{X\to +0}\int_{U}F(u,X)du,
\end{equation}
where 
\begin{equation}\label{eqn:4.76}
\psi_{\pm}(u)=\max\{\pm\psi(u),0\}.
\end{equation}
\end{remark}

\begin{proof}
Let $\psi_{\pm}$ be as in \eqref{eqn:4.76}.
Then $\psi(u)=\psi_+(u)-\psi_-(u)$ holds.
We have
\begin{equation}\label{eqn:4.77}
M(X;\psi)
=M(X;\psi_+)-M(X;\psi_-).
\end{equation}
From the inequalities:
\begin{equation}
\begin{split}
M(X;\psi_{\pm}) \leq \sup_{U} (\psi_{\pm}) \int_{U}F(u,X)du,
\end{split}
\end{equation}
the boundedness of $\limsup_{X\to +0}\int_{U}F(u,X)du$
implies
that of $\limsup_{X\to +0}M(X;\psi_{\pm})$.
From applying Lemma \ref{lem:4.5} to $M(X;\psi_{\pm})$,
there exist nonnegative constants $C(\psi_{\pm})$ such that
\begin{equation}\label{eqn:4.79}
\lim_{X \to +0}M(X;\psi_{\pm})=C(\psi_{\pm}).
\end{equation}
Let $C(\psi):=C(\psi_{+})-C(\psi_{-})$.
From \eqref{eqn:4.77} and \eqref{eqn:4.79},
we obtain \eqref{eqn:4.73}.
Note that when $\psi$ satisfies the condition \eqref{eqn:4.74},
$C(\psi)=C(\psi_+)$ and $C(\psi_-)=0$ hold, 
so $C(\psi)$ is positive.
\end{proof}

\section{Analytic continuation of associated integrals}\label{sec:5}

Let us consider the following integrals:
\begin{equation}\label{eqn:5.1}
\begin{split}
Z_{\a\b}(\psi)(s)=\int_{V}\left|x^a y^b +x^{a-q} y^b e^{-1/|y|^p}\right|^s x^{\a}y^{\b}\psi(x)dxdy \quad
\mbox{for $s\in \C$},
\end{split}
\end{equation}
where
$a,b,p,q$ are as in \eqref{eqn:3.1},
$\a,\b$ are nonnegative integers,
\begin{equation}\label{eqn:5.2}
V=\{(x,y)\in \R^2:0\leq x\leq r_1, 0\leq y\leq r_2\}
\end{equation}
with 
$r_1,r_2\in (0,1)$,
$r_2$ is sufficiently small so that
\begin{equation}\label{eqn:5.2.0}
r_2<\min\left\{\left(\frac{p}{p+1}\right)^{1/p},\left(\frac{p}{b(\a+1)/a+p-1}\right)^{1/p}\right\}
\end{equation}
and $\psi$ is a real-valued smooth function defined on $[0,r_1]$.
Since the integral $Z_{\a\b}(\psi)(s)$ converges
locally uniformly on the half-plane ${\rm Re}(s)>\max\{-\frac{\a+1}{a},-\frac{\b+1}{b}\}$,
Lebesgue's convergence theorem gives that
$Z_{\a\b}(\psi)(s)$ becomes a holomorphic function
there.
Indeed, the convergence of $Z_{\a\b}(\psi)(s)$ on the strip $\max\{-\frac{\a+1}{a},-\frac{\b+1}{b}\}<{\rm Re}(s)<0$
is shown by the inequalities:
\begin{equation}\label{eqn:5.3}
\begin{split}
\left|Z_{\a\b}(\psi)(s)\right|
&\leq \sup(|\psi|)\int_{V}\left|x^a y^b +x^{a-q} y^b e^{-1/|y|^p}\right|^{\sigma} x^{\a}y^{\b}dxdy\\
&\leq \sup(|\psi|)\int_{V}x^{a\sigma+\a} y^{b\sigma+\b}dxdy,\\
\end{split}
\end{equation}
and the convergence on the half-plane ${\rm Re}(s)\geq 0$ is trivial.
\begin{remark}\label{rem:5.1}
(i) In our analysis, the following
slightly general integrals are also considered:
\begin{equation}\label{eqn:5.4}
\hat{Z}_{\a\b}(\phi)(s)=\int_{V}\left|x^a y^b +x^{a-q} y^b e^{-1/|y|^p}\right|^s x^{\a}y^{\b}\phi(x,y)dxdy \quad
\mbox{for $s\in \C$},
\end{equation}
where $a,b,p,q,\a,\b,V$ are as in \eqref{eqn:5.1}
and $\phi$ is a smooth function defined on $V$.
An analogous estimate to \eqref{eqn:5.3} implies that
$\hat{Z}_{\a\b}(\phi)(s)$
becomes a holomorphic function
on the half-plane ${\rm Re}(s)>\max\{-\frac{\a+1}{a},-\frac{\b+1}{b}\}$.\\
(ii) If $\b \geq b(\a+1)/a-1$, then
the integrals
$Z_{\a \b}(\psi)(s)$ and $\hat{Z}_{\a\b}(\phi)(s)$
become
holomorphic functions
on the half-plane ${\rm Re}(s)>-(\a+1)/a$.
\end{remark}

The purpose of this section is to prove the following theorem,
which shows the analytic continuation of $Z_{\a\b}(\psi)(s)$
to the half-plane ${\rm Re}(s)>-(\a+1)/a$
even when $\a,\b\in\Z_+$ satisfy $\b<b(\a+1)/a-1$.

\begin{theorem}\label{thm:5.2}
If $\b<b(\a+1)/a-1$, then the integral $Z_{\a\b}(\psi)(s)$
can be analytically continued as a meromorphic function
to the half-plane ${\rm Re}(s)>-(\a+1)/a$,
which has only one pole at $s=-(\b+1)/b$ of order at most one.
\end{theorem}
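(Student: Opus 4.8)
The plan is to carry out the $x$-integration first and then analyse the resulting function of $y$ near $y=0$. On $V$ all of $x$, $y$ and $x^{q}+e^{-1/|y|^{p}}$ are nonnegative, so, abbreviating $e(y):=e^{-1/|y|^{p}}$, one has
\begin{equation}
Z_{\a\b}(\psi)(s)=\int_{0}^{r_{2}}y^{bs+\b}\,H(y,s)\,dy,\qquad
H(y,s):=\int_{0}^{r_{1}}x^{(a-q)s+\a}\bigl(x^{q}+e(y)\bigr)^{s}\psi(x)\,dx .
\end{equation}
For each fixed $y>0$ the constant $c:=e(y)$ is strictly positive, so $(x^{q}+c)^{s}$ is entire in $s$ and bounded on $[0,r_{1}]$; since $a\ge q$, the region of absolute convergence of $H(y,\cdot)$ — namely $\{\mathrm{Re}(s)>-(\a+1)/(a-q)\}$ when $a>q$, and all of $\C$ when $a=q$ — already contains the half-plane $\{\mathrm{Re}(s)>-(\a+1)/a\}$, on which $H(y,\cdot)$ is therefore holomorphic. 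Consequently every singularity of $Z_{\a\b}(\psi)$ in that half-plane must be produced by the behaviour of $H(y,s)$ as $y\to+0$ interacting with the weight $y^{bs+\b}$.

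Next I would split off the ``stable part'' of $H(y,s)$. Writing $\psi(x)=\sum_{k<K}\frac{\psi^{(k)}(0)}{k!}x^{k}+x^{K}\tilde\psi_{K}(x)$ by Taylor's formula, set
\begin{equation}
A_{K}(s):=\sum_{k<K}\frac{\psi^{(k)}(0)}{k!}\cdot\frac{r_{1}^{\,as+\a+k+1}}{as+\a+k+1}+\int_{0}^{r_{1}}x^{as+\a+K}\tilde\psi_{K}(x)\,dx ,
\end{equation}
which for $K$ large is holomorphic on $\{\mathrm{Re}(s)>-(\a+1)/a\}$ (its only poles are at $s=-(\a+k+1)/a$ with $k\ge0$, i.e.\ on or to the left of the boundary line). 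The key claim is the decomposition $H(y,s)=A_{K}(s)+R_{K}(y,s)$, where $R_{K}(y,\cdot)$ is holomorphic on $\{\mathrm{Re}(s)>-(\a+1)/a\}$ and, on every compact subset $\mathcal K$ of that half-plane, satisfies $|R_{K}(y,s)|\le C_{\mathcal K}\,e^{-\delta_{\mathcal K}/|y|^{p}}$ for some $C_{\mathcal K},\delta_{\mathcal K}>0$; that is, $R_{K}$ is flat in $y$ at $y=0$, uniformly for $s\in\mathcal K$.

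To obtain this flatness I would treat each monomial integral $\int_{0}^{r_{1}}x^{(a-q)s+\a+k}(x^{q}+c)^{s}\,dx$ by splitting $[0,r_{1}]$ at $x=c^{1/q}$. Fix $\mathcal K\subset\{\mathrm{Re}(s)>-(\a+1)/a\}$ compact and let $\epsilon_{\mathcal K}>0$ be a lower bound for $\mathrm{Re}\bigl((as+\a+k+1)/q\bigr)$ over $s\in\mathcal K$ and $0\le k<K$ — such a bound exists precisely because the threshold is $-(\a+1)/a$, and it makes $c^{(as+\a+k+1)/q}=e^{-(as+\a+k+1)/(q|y|^{p})}$ and $c^{j}$ ($j\ge1$) all flat in $y$ with $c=e(y)$. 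On $(0,c^{1/q})$ the substitution $x=c^{1/q}t$ factors out $c^{(as+\a+k+1)/q}$ times a function holomorphic on the half-plane; on $(c^{1/q},r_{1})$ one writes $(x^{q}+c)^{s}-x^{qs}=x^{qs}\bigl((1+c/x^{q})^{s}-1\bigr)$ and Taylor-expands $(1+u)^{s}$ to order $J$ in $u=c/x^{q}\in[0,1]$, producing for $1\le j\le J$ the terms $\binom{s}{j}\dfrac{r_{1}^{\,as+\a+k-qj+1}c^{j}-c^{(as+\a+k+1)/q}}{as+\a+k-qj+1}$ together with a remainder bounded by $C'_{\mathcal K}c^{\epsilon_{\mathcal K}}$ once $J$ is large. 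The remaining point is that at the would-be pole $s_{*}=(qj-\a-k-1)/a$ the displayed numerator equals $c^{j}-c^{j}=0$, so that apparent singularity is removable and $R_{K}(y,\cdot)$ is genuinely holomorphic. The exceptional exponents $as+\a+k-qj+1=0$ (where a logarithm appears) and the Taylor remainder $\int_{0}^{r_{1}}x^{as+\a+K}\tilde\psi_{K}(x)\bigl((x^{q}+c)^{s}-x^{qs}\bigr)\,dx$ are handled the same way and contribute only flat-in-$y$ terms.

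Finally, feeding the decomposition back gives
\begin{equation}
Z_{\a\b}(\psi)(s)=A_{K}(s)\cdot\frac{r_{2}^{\,bs+\b+1}}{bs+\b+1}+\int_{0}^{r_{2}}y^{bs+\b}\,R_{K}(y,s)\,dy .
\end{equation}
The first summand is holomorphic on $\{\mathrm{Re}(s)>-(\a+1)/a\}$ apart from a pole of order at most one at $s=-(\b+1)/b$, and in the second summand the estimate $|y^{bs+\b}R_{K}(y,s)|\le y^{\,b\sigma_{0}+\b}C_{\mathcal K}e^{-\delta_{\mathcal K}/|y|^{p}}$ with $\sigma_{0}=\min_{\mathcal K}\mathrm{Re}(s)$ is integrable on $(0,r_{2})$, so that integral is holomorphic on the whole half-plane; this yields the statement (when $\b\ge b(\a+1)/a-1$ the point $-(\b+1)/b$ lies on or outside the boundary and one simply recovers the holomorphic continuation of Remark~\ref{rem:5.1}). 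The hard part is the flatness assertion for $R_{K}$ in the third paragraph: it forces the exact value $-(\a+1)/a$ into the argument, and it relies on the cancellation $c^{j}-c^{j}=0$ at the exceptional points; by contrast the reduction in the first paragraph and the final assembly are routine, and the remaining bookkeeping (Taylor remainders of $\psi$, logarithmic exceptional exponents, uniformity on compacta — in the spirit of Section~\ref{sec:4.1}) is long but mechanical.
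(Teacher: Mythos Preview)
Your proof is correct and takes a genuinely different route from the paper's. Where the paper decomposes the integration region $V$ into three pieces $U_1$, $U_2$, $U_3$ according to the relative size of $x^q$ and $e^{-1/|y|^p}$ (with an auxiliary intermediate curve $x=E(y)=y^{-\delta}e(y)$), applies integration by parts in $y$ on $U_1$ to isolate the factor $1/(bs+\b+1)$ via the auxiliary function $\Psi(x,y;s)=(1+e(y)/x^q)^s$ of Lemma~\ref{lem:5.4}, and shows directly that the pieces over $U_2$, $U_3$ are holomorphic, you instead carry out the $x$-integration first and prove that the resulting function $H(y,s)$ differs from its limiting value $H(0,s)=A_K(s)$ by a quantity that is flat in $y$, uniformly on compact subsets of the half-plane. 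Your argument is more self-contained --- no region decomposition or auxiliary function $\Psi$ is needed, and the removable-singularity observation $c^{j}-c^{j}=0$ at $s_*=(qj-\a-1)/a$ is a clean way to handle the apparent poles of the individual Taylor terms --- but the paper's decomposition pays dividends later: essentially the same regions $U_j(\lambda)$ (now with a parameter $\lambda$) reappear in Section~\ref{sec:6} to extract the precise asymptotic limits as $\sigma\to -1/a+0$, so the machinery built here is reused there, whereas your method would require a separate setup for those asymptotics. One minor remark: the Taylor expansion of $\psi$ is not strictly necessary for your flatness argument (the splitting at $x=c^{1/q}$ and the expansion of $(1+c/x^q)^s$ already work with $\psi$ kept intact), though it does make the explicit formulas for the individual terms cleaner.
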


\begin{remark}\label{rem:5.2}
The above meromorphic function defined on the half-plane ${\rm Re}(s)>-(\a+1)/a$ may have a removable singularity at $s=-(\b+1)/b$.
If $\psi(0)$ is positive and $\psi$ is nonnegative,
then the function has only one simple pole at $s=-(\b+1)/b$.
\end{remark}

\subsection{Decompositions of associated integrals}\label{sec:5.1}
For a positive real number $\delta$,
the smooth function $E(\cdot)$ on $\R_+$ is defined by
\begin{equation}
\begin{split}
E(y)&:=y^{-\delta}e(y)
\end{split}
\end{equation}
for $y>0$, and $E(0)=0$,
where $e(\cdot)$ is as in \eqref{eqn:4.1}.

\begin{remark}\label{rem:5.3}
We remark some properties of the functions $e(\cdot)$ and $E(\cdot)$.
\begin{enumerate}
\item
$e(y)< E(y)$ on $(0,1)$. 
\item
The function $e(\cdot)$ is flat at the origin and so is the function $E(\cdot)$.
\item
A simple computation gives
\begin{equation}
\begin{split}
\frac{dE}{dy}&=-\delta y^{-\delta-1}e(y)+\frac{p}{q}y^{-p-\delta-1}e(y).
\end{split}
\end{equation}
\item
The functions $e(\cdot)$ and $E(\cdot)$ are monotonously increasing on
$\R_+$ and $[0,(p/q\delta)^{1/p}]$ respectively.
\end{enumerate}
\end{remark}

When $\delta$ satisfies
\begin{equation}\label{eqn:5.6.1}
\delta<\frac{p}{qr_2^p},
\end{equation}
we can define the functions $\rho: \R_+\to [0,r_2]$
and $\tau: \R_+\to [0,r_2]$
by
\begin{equation}\label{eqn:5.7}
\begin{split}
\rho(x)&=
\begin{cases}
e^{-1}(x) & \mbox{if } 0\leq x<e(r_2),\\
r_2 & \mbox{if } x\geq e(r_2),
\end{cases}
\\
\tau(x)&=
\begin{cases}
E^{-1}(x) & \mbox{if } 0\leq x<E(r_2),\\
r_2 & \mbox{if } x\geq E(r_2).
\end{cases}
\end{split}
\end{equation}
We notice from Remark \ref{rem:5.3} (iv) that the condition \eqref{eqn:5.6.1} is needed in the definition of the function $\tau$.
Note that
$e^{-1}(x)=\left(-q \log x\right)^{-1/p}$ on $(0,1)$ and
$\tau(x)\leq \rho(x) \leq r_2$ on $\R_+$.

Let us decompose the integral $Z_{\a\b}(\psi)(s)$.
From \eqref{eqn:5.2.0},
a positive real number $\delta$ is taken so that
\begin{equation}\label{eqn:5.8}
\max\left\{\frac{p+1}{q},\frac{b(\a+1)/a+p-1}{q}\right\}<\delta<\frac{p}{qr_2^p}.
\end{equation}
We decompose the set $V$ into the following three sets:
\begin{equation}\label{eqn:5.9}
\begin{split}
U_{1}&=U_{1}^{(\delta)}:=\{(x,y)\in V: x^q\geq y^{-q\delta}e^{-1/|y|^p}\ (\Leftrightarrow x \geq E(y))\},\\
U_{2}&=U_{2}^{(\delta)}:=\{(x,y)\in V: e^{-1/|y|^p}\leq x^q< y^{-q\delta}e^{-1/|y|^p} \ (\Leftrightarrow e(y) \leq x < E(y))\},\\
U_{3}&:=\{(x,y)\in V: x^q < e^{-1/|y|^p}\ (\Leftrightarrow x < e(y))\}.
\end{split}
\end{equation}
The integral $Z_{\a\b}(\psi)(s)$ is expressed as
\begin{equation}\label{eqn:5.10}
Z_{\a\b}(\psi)(s)=\sum_{j=1}^3 Z_{\a\b}^{(j)}(\psi)(s)
\end{equation}
with
\begin{equation}
Z_{\a\b}^{(j)}(\psi)(s)=\int_{U_j}|x^ay^b+x^{a-q}y^{b}e^{-1/|y|^p}|^s x^{\a} y^{\b}\psi(x)dxdy
\quad
\mbox{for $j=1,2,3$.}
\end{equation}
In a similar fashion to the integral $Z_{\a\b}(\psi)(s)$,
the integrals
$Z_{\a\b}^{(j)}(\psi)(s)$ become holomorphic functions
on the half-plane ${\rm Re}(s)>\max\{-(\a+1)/a,-(\b+1)/b\}$.

\subsection{Meromorphic continuation of $Z_{\a\b}^{(1)}(\psi)(s)$}
Let us investigate meromorphic continuation of $Z_{\a\b}^{(1)}(\psi)(s)$.
The following lemma will play a useful role in our analysis below.

\begin{lemma}\label{lem:5.4}
Let $\epsilon$ be a positive real number and
let $\Psi: (U_{1}^{(\epsilon)}\setminus\{(0,0)\}) \times \C\to \C$ be defined by
\begin{equation}\label{eqn:5.12}
\Psi(x,y;s)=\left|1+\frac{e^{-1/|y|^p}}{x^q}\right|^{s},
\end{equation}
where $U_{1}^{(\cdot)}$ is as in \eqref{eqn:5.9}
and $p,q$ are as in \eqref{eqn:5.1}.
Then we have the following:
\begin{enumerate}
\item
$\Psi(\cdot;s)$ can be continuously extended to $U_{1}^{(\epsilon)}$ for all $s\in \C$.
\item
$\Psi(x,y;\cdot)$ is an entire function for all $(x,y)\in U_{1}^{(\epsilon)}$.
\end{enumerate}
Moreover,
if $\epsilon>(p+1)/q$,
then the following hold:
\begin{enumerate}
\item[(iii)]
$\frac{\d \Psi}{\d y}(\cdot;s)$ can be continuously extended to $U_{1}^{(\epsilon)}$ for all $s\in \C$.
\item[(iv)]
$\frac{\d \Psi}{\d y}(x,y;\cdot)$
is an entire function for all $(x,y)\in U_{1}^{(\epsilon)}$.
\end{enumerate}
\end{lemma}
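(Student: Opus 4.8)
The plan is to reduce the whole statement to a single uniform decay estimate on $U_{1}^{(\delta)}$, namely that the ratio $e^{-1/|y|^p}/x^{q}$ is squeezed between $0$ and $y^{q\delta}$ there. First I would introduce the auxiliary function $g(x,y):=e^{-1/|y|^p}/x^{q}$, defined on $U_{1}^{(\delta)}\cap\{y>0\}$ (on which $x>0$ automatically, since $E(y)>0$), and extended by $g(x,0):=0$. The membership condition $x^{q}\geq y^{-q\delta}e^{-1/|y|^p}$ defining $U_{1}^{(\delta)}$ in \eqref{eqn:5.9} gives at once $0\leq g(x,y)\leq y^{q\delta}$ on $U_{1}^{(\delta)}$. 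Since this bound is uniform in $x$ and vanishes as $y\to 0$, a squeeze argument shows that $g$ is continuous on all of $U_{1}^{(\delta)}$ (continuity away from $\{y=0\}$ being clear), and clearly $1+g\geq 1$ throughout.

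For (i) and (ii) I would write $\Psi=(1+g)^{s}=\exp\bigl(s\log(1+g)\bigr)$, with $\log$ the ordinary real logarithm, which is legitimate because $1+g\geq 1$ (so the absolute value in \eqref{eqn:5.12} is superfluous). For fixed $s$ this is continuous in $(x,y)$ since $g$ is and $w\mapsto\exp(s\log w)$ is continuous on $[1,\infty)$; this gives (i), with boundary value $\Psi(x,0;s)=1$. For fixed $(x,y)$ the number $\log(1+g(x,y))$ is a nonnegative constant, so $s\mapsto\exp\bigl(s\log(1+g(x,y))\bigr)$ is entire; this gives (ii).

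For (iii) and (iv) I would invoke the stronger hypothesis $\delta>(p+1)/q$. Differentiating for $y>0$ gives $\frac{\d}{\d y}e^{-1/|y|^p}=\frac{p}{y^{p+1}}\,e^{-1/|y|^p}$, hence $\frac{\d g}{\d y}=\frac{p}{y^{p+1}}\,g$ and therefore $\bigl|\frac{\d g}{\d y}(x,y)\bigr|\leq p\,y^{q\delta-p-1}$ on $U_{1}^{(\delta)}\cap\{y>0\}$; the hypothesis makes the exponent positive, so $\frac{\d g}{\d y}\to 0$ as $y\to 0$ uniformly in $x$, and $\frac{\d g}{\d y}$ extends continuously to $U_{1}^{(\delta)}$ with value $0$ on $\{y=0\}$. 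Since $\frac{\d\Psi}{\d y}=s(1+g)^{s-1}\frac{\d g}{\d y}$ for $y>0$, and $(1+g)^{s-1}\to 1$ while $\frac{\d g}{\d y}\to 0$, the function $\frac{\d\Psi}{\d y}$ extends continuously to $U_{1}^{(\delta)}$, again with value $0$ on $\{y=0\}$; this is (iii), and an estimate of the form $|(1+g)^{s}-1|\leq C(s)\,g\leq C(s)\,y^{q\delta}$ for small $g$ confirms that this extension is genuinely the $y$-derivative of the extended $\Psi$ at $y=0$. For (iv), fixing $(x,y)$ with $y>0$ and writing $c:=\log(1+g(x,y))$ and $d:=\frac{\d g}{\d y}(x,y)$ (both constants), we have $\frac{\d\Psi}{\d y}=s\,d\,\exp\bigl((s-1)c\bigr)$, which is entire in $s$; at $y=0$ it equals $0$.

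I do not expect a genuine obstacle: everything is driven by the uniform decay $0\leq g\leq y^{q\delta}$, which is essentially built into the definition of $U_{1}^{(\delta)}$. The two points worth watching are that differentiating the flat factor $e^{-1/|y|^p}$ in $y$ costs $p+1$ powers of $y$---this is precisely why (iii)--(iv) need $\delta>(p+1)/q$ rather than merely $\delta>0$---and that the continuous extensions across $\{y=0\}$ must be verified to be uniform in $x$, which they are, since every bound above depends on $y$ alone.
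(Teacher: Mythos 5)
Your proof is correct and follows essentially the same route as the paper's: the key estimate $0\leq e^{-1/|y|^p}/x^{q}\leq y^{q\delta}$ built into the definition of $U_{1}^{(\delta)}$ drives (i)--(ii), and the explicit formula $\tfrac{\d\Psi}{\d y}=s(1+g)^{s-1}\tfrac{\d g}{\d y}$ together with $\bigl|\tfrac{\d g}{\d y}\bigr|\leq p\,y^{q\delta-p-1}$ drives (iii)--(iv), exactly as in the paper (which writes the same derivative as $psx^{-q}y^{-p-1}e^{-1/|y|^p}\Psi(x,y;s-1)$). The only cosmetic difference is that you set $g:=e^{-1/|y|^p}/x^{q}$ whereas the paper writes $g:=1+x^{-q}e^{-1/|y|^p}$, and your closing remark about the extension being the genuine derivative at $y=0$ is a harmless extra observation beyond what the lemma asserts.
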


\begin{proof}
For simplicity, we denote $g(x,y):=1+x^{-q}e^{-1/|y|^p}$.
Notice that $g$ is a smooth function on $U_{1}^{(\epsilon)}\setminus\{(0,0)\}$.
Since 
$1\leq g(x,y)\leq 1+ y^{q\epsilon}$ on $U_{1}^{(\epsilon)}\setminus\{(0,0)\}$,
we have
\begin{equation}\label{eqn:5.14}
\lim_{U_{1}^{(\epsilon)}\ni (x,y)\to (0,0)}g(x,y)=1.
\end{equation}
From \eqref{eqn:5.14},
we can see that $\Psi$ has the properties (i), (ii).

A simple computation gives
\begin{equation}\label{eqn:5.15}
\begin{split}
\frac{\d \Psi}{\d y}(x,y;s)&=psx^{-q}y^{-p-1}e^{-1/|y|^p}\Psi(x,y;s-1)
\end{split}
\end{equation}
on  $(U_{1}^{(\epsilon)}\setminus\{(0,0)\})\times \C$.
It suffices to show that
$\frac{\d \Psi}{\d y}(\cdot;s)$
can be continuously extended to the origin.
From the definition of the set $U_{1}^{(\epsilon)}$,
we have
\begin{equation}
\left|\frac{\d \Psi}{\d y}(x,y;s)\right|\leq
p|s|y^{q\epsilon-p-1}\left|\Psi(x,y;s-1)\right|
\end{equation}
for $(x,y;s)\in(U_{1}^{(\epsilon)}\setminus\{(0,0)\}) \times \C$.
We assume that $\epsilon>(p+1)/q$. Then
\begin{equation}
\lim_{U_{1}^{(\epsilon)}\ni (x,y)\to (0,0)}\frac{\d \Psi}{\d y}(x,y;s)=0
\quad\mbox{ for } s\in \C.
\end{equation}
Therefore we have the property (iii).
The properties (ii), (iii) and \eqref{eqn:5.15} give us the property (iv)
and complete the proof of the lemma.
\end{proof}

\begin{remark}
We can see that
for sufficiently large $\epsilon>0$,
the higher derivatives of $\Psi$ with respect to the variable $y$ have similar properties to (iii), (iv) in Lemma \ref{lem:5.4}.
\end{remark}

\begin{lemma}\label{lem:5.6}
If $\b<b(\a+1)/a-1$,
then the integral $Z_{\a\b}^{(1)}(\psi)(s)$
can be analytically continued as a meromorphic function
to the half-plane ${\rm Re}(s)>-(\a+1)/a$,
which has only one pole at $s=-(\b+1)/b$ of order at most one.
\end{lemma}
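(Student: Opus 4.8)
The plan is to analyze $Z_{\a\b}^{(1)}(\psi)(s)$ on the region $U_1^{(\delta)}$ by factoring out the "good" part of the amplitude. On $U_1^{(\delta)}$ we have $x \ge E(y) = y^{-\delta}e(y)$, so the factor $g(x,y) = 1 + x^{-q}e^{-1/|y|^p}$ stays bounded between $1$ and $1 + y^{q\delta}$; by Lemma \ref{lem:5.4}, $\Psi(x,y;s) = |g(x,y)|^s$ extends continuously to the origin, is entire in $s$, and (thanks to the choice \eqref{eqn:5.8} of $\delta$) has enough $y$-derivatives that also extend continuously. Writing
\begin{equation}
\left|x^a y^b + x^{a-q}y^b e^{-1/|y|^p}\right|^s = x^{as}y^{bs}\,\Psi(x,y;s),
\end{equation}
we get
\begin{equation}
Z_{\a\b}^{(1)}(\psi)(s) = \int_{U_1^{(\delta)}} x^{as+\a} y^{bs+\b}\,\Psi(x,y;s)\,\psi(x)\,dxdy.
\end{equation}
The idea is that $\Psi(x,y;s)$ plays the role of a smooth amplitude (in $(x,y)$, for each fixed $s$) and an entire amplitude (in $s$, for each fixed $(x,y)$), so the analytic continuation is governed entirely by the monomial $x^{as+\a}y^{bs+\b}$, exactly as in the real-analytic monomial case.

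First I would make the domain $U_1^{(\delta)}$ explicit as an iterated integral: for each $y \in [0,r_2]$, the slice is $\{x : E(y) \le x \le r_1\}$. I would split the $x$-integral at $x = 1$ (or rather observe $E(y) \le r_2 < 1$ so the lower limit is small) and handle the $y$-variable by performing the $x$-integration first. Because $\Psi$ and its $y$-derivatives up to some order $k$ extend continuously and $\psi$ is smooth, one can integrate by parts in $x$ or simply expand: the function $x \mapsto x^{as+\a}\Psi(x,y;s)\psi(x)$ has an antiderivative whose only obstruction to holomorphy in $s$ is the monomial power, producing a simple pole at $as + \a + 1 = 0$, i.e. $s = -(\a+1)/a$ — which is exactly the boundary of the claimed region and hence excluded. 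What survives after the $x$-integration is (up to terms holomorphic on ${\rm Re}(s) > -(\a+1)/a$) a boundary contribution at the lower endpoint $x = E(y)$, and this is where the $y$-integral, now of the form $\int_0^{r_2} y^{bs+\b}(\text{something involving } E(y)^{as+\a+1}) \cdot(\cdots)\,dy$, must be examined. The choice of $\delta$ in \eqref{eqn:5.8} is designed precisely so that the extra powers of $y$ coming from $E(y)^{as+\a+1} = y^{-\delta(as+\a+1)}e(y)^{as+\a+1}$, combined with $y^{bs+\b}$, either produce a convergent integral on the whole half-plane (the flat factor $e(y)^{as+\a+1}$ helps when $\sigma < 0$) or, after finitely many integrations by parts in $y$, reduce the pole structure to a single simple pole at $s = -(\b+1)/b$.

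The cleanest route, and the one I would actually carry out, is a double integration-by-parts / Taylor-expansion argument: expand $\Psi(x,y;s)\psi(x) = \sum_{\text{finite}} c_{ij}(s) x^i y^j + (\text{higher-order remainder})$ in $(x,y)$ near the origin, where the $c_{ij}(s)$ are entire (a consequence of (ii), (iv) of Lemma \ref{lem:5.4} and their higher-$y$-derivative analogues noted in the Remark after it). Each monomial term $\int_{U_1^{(\delta)}} x^{as+\a+i}y^{bs+\b+j}\psi$-type piece is an explicit Beta-type integral over the region $\{E(y)\le x\le r_1,\ 0\le y\le r_2\}$, and one computes its continuation directly: the $x$-integration contributes a factor $\frac{1}{as+\a+i+1}(r_1^{as+\a+i+1} - E(y)^{as+\a+i+1})$; the $r_1$-part gives, after $y$-integration, a function with a simple pole only at $s=-(\b+j+1)/b$, of which only $j=0$ lies in the target region, while the $E(y)$-part gives $\int_0^{r_2} y^{bs+\b+j}y^{-\delta(as+\a+i+1)}e(y)^{as+\a+i+1}dy$, whose integrand's power of $y$ is $bs+\b+j-\delta(as+\a+i+1)$ — by \eqref{eqn:5.8} this exponent has real part $> -1$ throughout ${\rm Re}(s) > -(\a+1)/a$ when $\sigma$ is near $-(\a+1)/a$, using the flatness of $e(y)$ to absorb any logarithmic or mild singular behavior, so this contributes a holomorphic function there. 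A remainder estimate (the Taylor remainder being $O(x^{M}) + O(y^{M})$ for large $M$) shows the leftover integral converges absolutely and is holomorphic on ${\rm Re}(s) > -(\a+1)/a$.

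The main obstacle is \textbf{keeping precise track of the pole from the $y$-integration near $s = -(\b+1)/b$ and verifying it has order at most one} while simultaneously ensuring that the lower-endpoint $E(y)$-contributions never introduce a pole (or a non-polar singularity) to the left of ${\rm Re}(s) = -(\a+1)/a$. This forces one to be careful about how large $M$ must be in the Taylor expansion relative to $\delta$, $a$, $b$, $\a$, $\b$, and about the interplay between the $y^{-\delta(as+\a+i+1)}$ blow-up and the flat factor $e(y)^{as+\a+i+1}$ — a similar analysis to Lemma \ref{lem:4.3} but here only qualitative (holomorphy/meromorphy), not asymptotic, information is needed, which keeps it manageable. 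Once $Z_{\a\b}^{(1)}$ is handled, I would note that $Z_{\a\b}^{(2)}$ and $Z_{\a\b}^{(3)}$ are treated by the analogous (in fact easier, since there $x^{-q}e^{-1/|y|^p} \ge 1$ so one factors out that term instead) decompositions, and summing via \eqref{eqn:5.10} yields Theorem \ref{thm:5.2}; but Lemma \ref{lem:5.6} itself is exactly the statement about $Z_{\a\b}^{(1)}$, so the proof ends once the $j=0$ term is isolated as the unique simple pole at $s = -(\b+1)/b$ and everything else is shown holomorphic on ${\rm Re}(s) > -(\a+1)/a$.
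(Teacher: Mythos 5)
Your plan takes a genuinely different route from the paper — integrate in $x$ first and then Taylor-expand $\Psi(x,y;s)\psi(x)$ in $(x,y)$ — whereas the paper performs a single integration by parts \emph{in $y$}, which is what produces the factor $1/(bs+\b+1)$ in \eqref{eqn:5.22}. The difference is not cosmetic: the Taylor-expansion step in your argument does not go through, because $\Psi$ is \emph{not} jointly smooth in $(x,y)$ on $U_1^{(\delta)}$. From
\begin{equation}
\frac{\d \Psi}{\d x}(x,y;s)=-qs\,x^{-q-1}e^{-1/|y|^p}\,\Psi(x,y;s-1),
\end{equation}
and since on the boundary curve $x=E(y)$ one has $x^{-q}e^{-1/|y|^p}=y^{q\delta}$ exactly, it follows that
\begin{equation}
x^{-q-1}e^{-1/|y|^p}\Big|_{x=E(y)}=\frac{y^{q\delta}}{E(y)}=y^{(q+1)\delta}\,e^{1/(q y^p)}\longrightarrow \infty
\qquad(y\to 0^+),
\end{equation}
while $\Psi(x,y;s-1)\to 1$. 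So $\d\Psi/\d x$ does not extend continuously to the origin, and $\Psi$ fails to be even $C^1$ in $x$ up to $(0,0)$. Lemma~\ref{lem:5.4} and the Remark after it give control only of the $y$-derivatives; your own citation of parts (ii), (iv) (and their $y$-derivative analogues) already signals that the coefficients $c_{ij}(s)$ with $i\geq 1$ are not available. Taylor-expanding $\Psi$ in $y$ alone is also fruitless, since $\Psi$ is flat in $y$ at $y=0$ (every $y$-derivative vanishes there except the constant $1$).

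Your fallback — a bare integration by parts in $x$ without Taylor expansion — also does not isolate the pole at $s=-(\b+1)/b$. The remainder produced, $\int x^{as+\a+1}\Psi\,\psi'\,y^{bs+\b}\,dxdy$, has exactly the same form as $Z_{\a\b}^{(1)}$ with $(\a,\psi)$ replaced by $(\a+1,\psi')$, and its $y$-integration still diverges at $\sigma=-(\b+1)/b$; raising $\a$ moves the $x$-threshold further left, so the bottleneck $\max\{-(\a+k+1)/a,-(\b+1)/b\}=-(\b+1)/b$ is unchanged. The spurious $1/(as+\a+i+1)$ factors live on the boundary line $\sigma=-(\a+1)/a$ and are indeed harmless, but they do not cancel the genuine $y$-obstruction. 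The paper avoids all of this by integrating by parts once, in $y$, which by \eqref{eqn:5.15} and Lemma~\ref{lem:5.4}(iii),(iv) is legitimate and produces the explicit factor $\frac{1}{bs+\b+1}$, together with residual integrals $W_{\a\b}^{(j)}$ containing the flat helper $(e(y))^q$. These residuals are shown convergent on ${\rm Re}(s)>-(\a+1)/a$ using the inequality $(e(y))^q\leq x^q y^{q\delta}$ on $U_1$, the change of variables $u=E^{-1}(x)$ in \eqref{eqn:5.24} that surfaces the flat factor $(e(u))^{a\sigma+\a+1}$, and the choice of $\delta$ in \eqref{eqn:5.8}. Your instinct that flatness tames the $E(y)$-boundary contribution is right, but the mechanism you propose to exploit it is unavailable.
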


\begin{proof}
By decomposing the integral region $U_1$ 
into the following two sets:
\begin{equation}\label{eqn:5.19}
\{(x,y)\in U_1:x\leq E(\tau(r_1))\}, \quad
\{(x,y)\in U_1:x> E(\tau(r_1))\},
\end{equation}
we have
\begin{equation}
\begin{split}
Z_{\a\b}^{(1)}(\psi)(s)
=&\int_{0}^{E(\tau(r_1))}
\left(\int_0^{E^{-1}(x)}x^{as+\a}y^{bs+\b}
\Psi(x,y;s) \psi(x)dy\right)dx\\
&+\int_{E(\tau(r_1))}^{r_1}
\left(\int_0^{r_2}x^{as+\a}y^{bs+\b}
\Psi(x,y;s) \psi(x)dy\right)dx,
\end{split}
\end{equation}
where $\Psi$ is as in \eqref{eqn:5.12}.
We remark that if $r_1<E(r_2)$, then the second set in \eqref{eqn:5.19} is empty.
From \eqref{eqn:5.8}, Lemma \ref{lem:5.4} and \eqref{eqn:5.15},
integration by parts with respect to the variable $y$
gives
\begin{equation}
\begin{split}
Z_{\a\b}^{(1)}(\psi)(s)
=&\frac{1}{bs+\b+1}W_{\a\b}^{(1)}(\psi)(s)-\frac{ps}{bs+\b+1}W_{\a\b}^{(2)}(\psi)(s)\\
&+\frac{r_2^{bs+\b+1}}{bs+\b+1}W_{\a\b}^{(3)}(\psi)(s)-\frac{ps}{bs+\b+1}W_{\a\b}^{(4)}(\psi)(s)
\end{split}
\end{equation}
with
\begin{equation}\label{eqn:5.22}
\begin{split}
W_{\a\b}^{(1)}(\psi)(s)&=\int_{0}^{E(\tau(r_1))}x^{as+\a}(E^{-1}(x))^{bs+\b+1}
\Psi(x,E^{-1}(x);s)
\psi(x)dx,\\
W_{\a\b}^{(2)}(\psi)(s)&=\int_{0}^{E(\tau(r_1))}
\left(\int_0^{E^{-1}(x)}x^{as-q+\a}y^{bs-p+\b}(e(y))^{q}
\Psi(x,y;s-1)\psi(x)dy\right)dx,\\
W_{\a\b}^{(3)}(\psi)(s)&=\int_{E(\tau(r_1))}^{r_1}x^{as+\a}
\Psi(x,r_2;s) \psi(x)dx,\\
W_{\a\b}^{(4)}(\psi)(s)&=\int_{E(\tau(r_1))}^{r_1}
\left(\int_0^{r_2}x^{as-q+\a}y^{bs-p+\b}(e(y))^{q}
\Psi(x,y;s-1)
\psi(x) dy\right)dx.
\end{split}
\end{equation}
In order to prove that each $W_{\a\b}^{(j)}(\psi)(s)$ becomes
a holomorphic function on the half-plane ${\rm Re}(s)>-(\a+1)/a$,
it suffices to show the local uniform convergence of each $W_{\a\b}^{(j)}(\psi)(s)$
on the strip $-(\a+1)/a<{\rm Re}(s)\leq 0$.

Since $|\Psi(x,y;s)|\leq 1$ when ${\rm Re}(s)\leq 0$ and $(e(y))^q\leq x^{q}y^{q\delta}$ for $(x,y)\in U_1$,
we have
\begin{equation}
\left|W_{\a\b}^{(j)}(\psi)(s)\right|\leq \sup(|\psi|) \tilde{W}_{\a\b}^{(j)}(\sigma)
\quad \mbox{for $j=1,2,3,4$},
\end{equation}
where
\begin{equation}\label{eqn:5.23}
\begin{split}
\tilde{W}_{\a\b}^{(1)}(\sigma)&=\int_{0}^{E(\tau(r_1))}x^{a\sigma+\a}(E^{-1}(x))^{b\sigma+\b+1}dx,\\
\tilde{W}_{\a\b}^{(2)}(\sigma)&=\int_{0}^{E(\tau(r_1))}
\left(\int_0^{E^{-1}(x)}x^{a\sigma+\a}y^{b\sigma-p+\b+q\delta}
dy\right)dx,\\
\tilde{W}_{\a\b}^{(3)}(\sigma)&=\int_{E(\tau(r_1))}^{r_1}x^{a\sigma+\a}dx,\\
\tilde{W}_{\a\b}^{(4)}(\sigma)&=\int_{E(\tau(r_1))}^{r_1}
\left(\int_0^{r_2}x^{a\sigma+\a}y^{b\sigma-p+\b+q\delta}
dy\right)dx.
\end{split}
\end{equation}
Changing the integral variable in \eqref{eqn:5.23} by $u=E^{-1}(x)$,
we have
\begin{equation}\label{eqn:5.24}
\begin{split}
\tilde{W}_{\a\b}^{(1)}(\sigma)=&
-\delta\int_{0}^{\tau(r_1)}
u^{b\sigma+\b-a\delta\sigma-\a\delta-\delta}
(e(u))^{a\sigma+\a+1}du\\
&+\frac{p}{q}
\int_{0}^{\tau(r_1)}
u^{b\sigma-p+\b-a\delta\sigma-\a\delta-\delta}
(e(u))^{a\sigma+\a+1}du.
\end{split}
\end{equation}
Since $a\sigma+\a+1>0$, the flatness of $e(u)$ at the origin gives
the convergences of
the integrals in \eqref{eqn:5.24}.
From \eqref{eqn:5.8}, we see that 
$b\sigma-p+\b+q\delta>-1$,
which implies the convergences of the integrals $\tilde{W}_{\a\b}^{(2)}(\sigma)$ and $\tilde{W}_{\a\b}^{(4)}(\sigma)$.
The convergence of $\tilde{W}_{\a\b}^{(3)}(\sigma)$ is trivial.
The lemma is proven.
\end{proof}

\subsection{Holomorphic continuation of $Z_{\a\b}^{(2)}(\psi)(s)$ and $Z_{\a\b}^{(3)}(\psi)(s)$}

\begin{lemma}\label{lem:5.7}
The integrals
$Z_{\a\b}^{(2)}(\psi)(s)$ and $Z_{\a\b}^{(3)}(\psi)(s)$
become holomorphic functions
on the half-plane ${\rm Re}(s)>-(\a+1)/a$.
\end{lemma}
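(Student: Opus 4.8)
The plan is to apply the standard criterion for holomorphy of a parameter integral: $Z^{(2)}_{\a\b}(\psi)(s)$ and $Z^{(3)}_{\a\b}(\psi)(s)$ are holomorphic on $\{{\rm Re}(s)>-(\a+1)/a\}$ provided their defining integrals converge absolutely and locally uniformly there, since for almost every $(x,y)\in V$ the integrand $|x^ay^b+x^{a-q}y^be^{-1/|y|^p}|^s x^{\a}y^{\b}\psi(x)$ is entire in $s$ (the base vanishes only on the null set $\{xy=0\}$). First I would reduce the problem: the base $f:=x^ay^b+x^{a-q}y^be^{-1/|y|^p}$ is continuous, hence bounded, on $V$, so on any compact $K\subset\{{\rm Re}(s)>-(\a+1)/a\}$, writing $\sigma_1:=\inf_{s\in K}{\rm Re}(s)$, one has $|f|^{{\rm Re}(s)}\le C_K|f|^{\sigma_1}$ on $V$ for a constant $C_K$ depending only on $K$. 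Thus it suffices to prove $\int_{U_j}|f|^{\sigma_1}x^{\a}y^{\b}\,dx\,dy<\infty$ for an arbitrary fixed $\sigma_1\in(-(\a+1)/a,0]$ (the case $\sigma_1>0$ being trivial, and the case $\b\ge b(\a+1)/a-1$ being already immediate, so that $\b<b(\a+1)/a-1$ is the situation of interest, though the estimates below are uniform in all cases).

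The one algebraic fact used throughout is that, since $q\le a$ and $e^{-1/|y|^p}\ge 0$,
\[
f(x,y)=x^{a-q}y^{b}\bigl(x^{q}+e^{-1/|y|^p}\bigr)\ \ge\ x^{a}y^{b}\ >\ 0
\]
on the interior of $V$, whence $|f|^{\sigma_1}\le x^{a\sigma_1}y^{b\sigma_1}$ there. The only obstruction to integrability is the factor $y^{b\sigma_1+\b}$ near $y=0$, and this is exactly where the defining inequalities of $U_2$ and $U_3$ in \eqref{eqn:5.9} must be used, by integrating in the favourable order in each chart.

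On $U_3$ I would integrate in $y$ first: since $U_3\subset\{x<e(r_2)\}$ and, for fixed $x$, the variable $y$ runs over $(\rho(x),r_2)$ with $\rho(x)=(-q\log x)^{-1/p}$ as recalled in Section~\ref{sec:5.1}, the inner integral $\int_{\rho(x)}^{r_2}y^{b\sigma_1+\b}\,dy$ is, for every sign of $b\sigma_1+\b+1$, bounded by a fixed power of $|\log x|$ up to an additive constant; integrating this against $x^{a\sigma_1+\a}$, which is integrable near $0$ because $a\sigma_1+\a>-1$, gives the finiteness. On $U_2$ I would instead integrate in $x$ first: for fixed $y$, the variable $x$ runs over $[e(y),\min\{E(y),r_1\})$, and since $a\sigma_1+\a+1>0$,
\[
\int_{e(y)}^{\min\{E(y),r_1\}}x^{a\sigma_1+\a}\,dx\ \le\ \frac{E(y)^{a\sigma_1+\a+1}}{a\sigma_1+\a+1}\ =\ \frac{y^{-\delta(a\sigma_1+\a+1)}\bigl(e(y)\bigr)^{a\sigma_1+\a+1}}{a\sigma_1+\a+1};
\]
because $e$ is flat at the origin and $a\sigma_1+\a+1>0$, this decays faster than any power of $y$, so multiplying by $y^{b\sigma_1+\b}$ still leaves a function integrable near $y=0$, and $\int_{U_2}|f|^{\sigma_1}x^{\a}y^{\b}\,dx\,dy<\infty$ follows. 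Since these bounds are uniform for $s\in K$, the required local uniform convergence, and hence the holomorphy, is obtained.

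I do not expect a serious obstacle here: the content is entirely in choosing the right order of iterated integration in each of $U_2$ and $U_3$ so that the defining inequality converts the potentially divergent $y$-part into, respectively, a harmless flat factor and a harmless logarithmic factor. The only mildly delicate book-keeping is the sign analysis of $b\sigma_1+\b+1$ in the $U_3$ estimate, which is routine.
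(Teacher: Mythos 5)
Your proof is correct, and it is the same in spirit as the paper's (estimate $|\psi|$ by its sup, reduce to a real exponent, show local uniform convergence of the resulting positive integral), and for $U_2$ your estimate is essentially identical to the paper's: integrate in $x$ first over $[e(y),\min\{E(y),r_1\})$ to pick up the flat factor $E(y)^{a\sigma+\a+1}$, whose super-polynomial decay at $y=0$ kills the possibly divergent $y^{b\sigma+\b}$.

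On $U_3$, however, you take a genuinely different route. The paper keeps the finer pointwise bound $|f|^{\sigma}\le x^{(a-q)\sigma}y^{b\sigma}\bigl(e_{p,q}(y)\bigr)^{q\sigma}$ (from $x^q+e^{-1/|y|^p}\ge e^{-1/|y|^p}$), then integrates in $x$ first over $[0,e(y))$, which produces the exponent combination $e(y)^{(a-q)\sigma+\a+1+q\sigma}=e(y)^{a\sigma+\a+1}$; since $a\sigma+\a+1>0$, this is again a flat factor in $y$ and convergence is immediate. You instead use the cruder bound $|f|^{\sigma_1}\le x^{a\sigma_1}y^{b\sigma_1}$ (from $x^q+e^{-1/|y|^p}\ge x^q$), which by itself would not integrate in $x$ first; you compensate by switching the order, integrating in $y$ first over $(\rho(x),r_2]$, and observing that because $\rho(x)=(-q\log x)^{-1/p}$ tends to $0$ only logarithmically, the inner integral contributes at worst a power of $|\log x|$, which $x^{a\sigma_1+\a}$ with $a\sigma_1+\a>-1$ absorbs. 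Both arguments are correct and about equally short; the paper's is perhaps a touch more robust (it never sees the log), while yours is slightly more elementary in the pointwise bound. Your reduction to a single real exponent $\sigma_1$ via $|f|^{\mathrm{Re}(s)}\le C_K|f|^{\sigma_1}$ (using boundedness of $|f|$ on the compact $V$) is also a small streamlining not spelled out in the paper, and it is sound.
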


\begin{proof}
Let us show the local uniform convergences of the integrals
on the strip $-(\a+1)/a<{\rm Re}(s)\leq 0$.
The convergences on the half-plane ${\rm Re}(s)>0$ are trivial.

The integrals $Z_{\a\b}^{(2)}(\psi)(s)$ and $Z_{\a\b}^{(3)}(\psi)(s)$
can be estimated by using the following integrals:
\begin{equation}
\begin{split}
\tilde{Z}_{\a\b}^{(2)}(\sigma)&=\int_{U_2}x^{a\sigma+\a}y^{b\sigma+\b}dxdy,\\
\tilde{Z}_{\a\b}^{(3)}(\sigma)&=\int_{U_3}x^{(a-q)\sigma+\a}y^{b\sigma+\b}(e(y))^{q\sigma}dxdy.\\
\end{split}
\end{equation}
Indeed, we have that for $j=2,3$,
\begin{equation}
\begin{split}
\left|Z_{\a\b}^{(j)}(\psi)(s)\right|
&\leq \sup(|\psi|) \tilde{Z}_{\a\b}^{(j)}(\sigma).
\end{split}
\end{equation}

By decomposing the integral region $U_2$ 
into the following two sets:
\begin{equation}\label{eqn:5.27}
\{(x,y)\in U_2:y\leq \tau(r_1)\},\quad
\{(x,y)\in U_2:y> \tau(r_1)\},
\end{equation}
we have
\begin{equation}\label{eqn:5.28}
\tilde{Z}_{\a\b}^{(2)}(\sigma)=
F_1(\sigma)+F_2(\sigma)
\end{equation}
with
\begin{equation}
\begin{split}
F_1(\sigma)&=\int_{0}^{\tau(r_1)}\left(\int_{e(y)}^{E(y)} x^{a\sigma+\a}y^{b\sigma+\b}dx\right)dy,\\
F_2(\sigma)&=\int_{\tau(r_1)}^{\rho(r_1)}\left(\int_{e(y)}^{r_1} x^{a\sigma+\a}y^{b\sigma+\b}dx\right)dy.\\
\end{split}
\end{equation}
We remark that if $r_1\geq E(r_2)$, then the second set in \eqref{eqn:5.27} is empty.
The integral $F_1(\sigma)$ can be computed as
\begin{equation}\label{eqn:5.30}
F_1(\sigma)=
\frac{1}{(a\sigma+\a+1)}\int_0^{\tau(r_1)}
y^{b\sigma+\b}\left((E(y))^{a\sigma+\a+1}-(e(y))^{a\sigma+\a+1}\right)dy.
\end{equation}
The flatness of the functions $E(\cdot)$ and $e(\cdot)$ at the origin
gives the convergence of the integral in \eqref{eqn:5.30}.
The convergence of
the integral $F_2(\sigma)$ is clear.
From \eqref{eqn:5.28}, we obtain the convergence of $\tilde{Z}_{\a\b}^{(2)}(\sigma)$, which implies that of $Z_{\a\b}^{(2)}(\psi)(s)$.

Let us consider the integral $\tilde{Z}_{\a\b}^{(3)}(\sigma)$.
By decomposing the integral region $U_3$
into the following two sets:
\begin{equation}\label{eqn:5.31}
\{(x,y)\in U_3:y\leq \rho(r_1)\},\quad
\{(x,y)\in U_3:y> \rho(r_1)\},
\end{equation}
we have
\begin{equation}\label{eqn:5.32}
\tilde{Z}_{\a\b}^{(3)}(\sigma)=G_1(\sigma)+G_2(\sigma)
\end{equation}
with
\begin{equation}
\begin{split}
G_1(\sigma)&=\int_{0}^{\rho(r_1)}\left(\int_{0}^{e(y)}x^{(a-q)\sigma+\a}y^{b\sigma+\b}(e(y))^{q\sigma} dx\right)dy,\\
G_2(\sigma)&=\int_{\rho(r_1)}^{r_2}\left(\int_{0}^{r_1}x^{(a-q)\sigma+\a}y^{b\sigma+\b}(e(y))^{q\sigma} dx\right)dy.\\
\end{split}
\end{equation}
We remark that if $r_1\geq e(r_2)$, then the second set in \eqref{eqn:5.31} is empty.
The integral $G_1(\sigma)$ can be computed as
\begin{equation}\label{eqn:5.34}
G_1(\sigma)=\frac{1}{(a-q)\sigma+\a+1}
\int_{0}^{\rho(r_1)}y^{b\sigma+\b}(e(y))^{a\sigma+\a+1} dy.
\end{equation}
From the flatness of $e(\cdot)$ at the origin,
the integral $G_1(\sigma)$ converges.
On the other hand,
we have
\begin{equation}\label{eqn:5.35}
G_2(\sigma)=
\frac{r_1^{(a-q)\sigma+\a+1}}{(a-q)\sigma+\a+1}
\int_{\rho(r_1)}^{r_2}y^{b\sigma+\b}(e(y))^{q\sigma}dy.
\end{equation}
The last integral clearly converges.
Combining \eqref{eqn:5.32}, \eqref{eqn:5.34}, \eqref{eqn:5.35}
gives the convergence of $\tilde{Z}_{\a\b}^{(3)}(\sigma)$, which implies that of $Z_{\a\b}^{(3)}(\psi)(s)$.

Since $Z_{\a\b}^{(2)}(\psi)(s)$ and $Z_{\a\b}^{(3)}(\psi)(s)$ locally uniformly converge
on the half-plane ${\rm Re}(s)>-(\a+1)/a$,
the integrals become holomorphic functions there.
\end{proof}

\subsection{Proof of Theorem \ref{thm:5.2}}
From \eqref{eqn:5.10}, Lemmas \ref{lem:5.6} and \ref{lem:5.7},
the theorem is given.

\section{Asymptotic behavior of associated integrals at $s=-1/a$}
\label{sec:6}

In this section,
let us consider integrals of the form
\begin{equation}\label{eqn:6.1}
Z_{\b}(s)=\int_{V}\left|x^a y^b +x^{a-q} y^b e^{-1/|y|^p}\right|^s y^{\b}dxdy \quad
\mbox{for $s\in \C$},
\end{equation}
where 
$a,b,p,q$ are as in \eqref{eqn:3.1},
$\b$ is a nonnegative integer,
$V=\{(x,y)\in\R^2: 0\leq x \leq r_1, 0\leq y \leq r_2\}$
with $r_1, r_2\in (0,1)$
and $r_2$ is sufficiently small so that 
\begin{equation}\label{eqn:6.7.1}
r_2<\min\left\{\left(\frac{p}{p+1}\right)^{1/p},\left(\frac{p}{b/a+2p-1}\right)^{1/p}\right\}.
\end{equation}
From Remark \ref{rem:5.1} (ii),
the integral $Z_{\b}(s)$ becomes a holomorphic function
on the half-plane ${\rm Re}(s)>-1/a$
when $\b\geq b/a-1$.
On the other hand,
Theorem \ref{thm:5.2} and Remark \ref{rem:5.2} give that
$Z_{\b}(s)$ can be analytically continued as a meromorphic function, denoted again by $Z_{\b}(s)$, to the half-plane ${\rm Re}(s)>-1/a$,
which has only one simple pole at $s=-(\b+1)/b$,
when $\b<b/a-1$.
We show the following asymptotic behavior of
the restriction of $Z_{\b}(s)$
to the real axis as $s\to-1/a+0$.

\begin{theorem}\label{thm:6.1}
We have the following.
\begin{enumerate}
\item
If $0\leq \b< p+b/a-1$ and $\b\neq b/a-1$, then
\begin{equation}
\lim_{\sigma\to-1/a+0}(a\sigma +1)^{1+\frac{b/a-\b-1}{p}}\cdot Z_{\b}(\sigma)
=\frac{C_{\b}}{(-b/a+\b+1)},
\end{equation}
where $C_{\b}$ is the positive constant defined by
\begin{equation}\label{eqn:6.3}
C_{\b}=q^{\frac{b/a-\b-1}{p}}\Gamma\left(1+\frac{b/a-\b-1}{p}\right).
\end{equation}
\item
If $\b=b/a-1$,  
then
\begin{equation}
\lim_{\sigma\to-1/a+0}(a\sigma+1)^2\cdot Z_{\b}(\sigma)
=\frac{a}{b}.
\end{equation}
\item
If $\b =p+b/a-1$,
then
\begin{equation}
\lim_{\sigma\to-1/a+0}\left|\log(a\sigma +1)\right|^{-1}\cdot Z_{\b}(\sigma)
=\frac{1}{pq}.
\end{equation}.
\item
If $\b>p+b/a-1$, 
then there exists a constant $B_{\b}$ which
is independent of $\sigma$, such that
\begin{equation}
\lim_{\sigma\to -1/a+0}Z_{\b}(\sigma)=B_{\b}.
\end{equation}
\end{enumerate}
\end{theorem}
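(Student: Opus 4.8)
The plan is to exploit the factorization $x^ay^b+x^{a-q}y^be^{-1/|y|^p}=x^{a-q}y^b\bigl(x^q+(e(y))^q\bigr)$, where $e(y)=e_{p,q}(y)$ so that $(e(y))^q=e^{-1/|y|^p}$ for $y>0$, together with the substitution $x=e(y)\,w$. On the real line $s=\sigma$ this turns $Z_\beta(\sigma)$ into
\begin{equation}
Z_\beta(\sigma)=\int_0^{r_2}(e(y))^{a\sigma+1}\,y^{b\sigma+\beta}\left(\int_0^{r_1/e(y)}w^{(a-q)\sigma}(w^q+1)^\sigma\,dw\right)dy .
\end{equation}
Writing $X:=a\sigma+1$ and $b\sigma+\beta=A+BX$ with $A:=\beta-b/a$, $B:=b/a$, I would split the inner integral at $w=1$, extract the non-integrable tail $\int_1^{r_1/e(y)}w^{a\sigma}\,dw=\bigl((r_1/e(y))^X-1\bigr)/X$ --- the source of the singularity at $\sigma=-1/a$ --- and absorb the remaining $w$-integration into a factor $P(\sigma)$ that extends continuously to $\sigma=-1/a$, together with an error $R(y,\sigma)$ coming from the cut-off at $w=r_1/e(y)$ for which $(e(y))^XR(y,\sigma)=O((e(y))^q)$. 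This produces the exact identity
\begin{equation}
Z_\beta(\sigma)=P(\sigma)\,K(X)+\frac{r_1^X}{X}\cdot\frac{r_2^{b\sigma+\beta+1}}{b\sigma+\beta+1}-\frac{K(X)}{X}+Q(\sigma),
\end{equation}
in which $K(X)=\int_0^{r_2}y^{A+BX}(e(y))^X\,dy$ is exactly of the form treated in Lemma~\ref{lem:4.1}, the unique pole at $s=-(\beta+1)/b$ already furnished by Theorem~\ref{thm:5.2} is visibly located in the middle term, and $Q(\sigma)=\int_0^{r_2}(e(y))^Xy^{b\sigma+\beta}R(y,\sigma)\,dy$ stays bounded and converges as $\sigma\to-1/a+0$. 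Combining the two middle terms as $\frac1X\int_0^{r_2}y^{A+BX}\bigl(r_1^X-(e(y))^X\bigr)\,dy$ and, when $A>-1$, writing $r_1^X-(e(y))^X=(r_1^X-1)+(1-(e(y))^X)$, brings in the integral $\tilde K(X)=\int_0^{r_2}y^{A+BX}(1-(e(y))^X)\,dy$ of Lemma~\ref{lem:4.4}. (Read backwards, this identity also re-derives the meromorphic continuation of Theorem~\ref{thm:5.2} in this case.)

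The four cases of the theorem correspond to the position of $A=\beta-b/a$ relative to the thresholds $-1$ and $p-1$ built into Lemmas~\ref{lem:4.1} and~\ref{lem:4.4}. Case (i) splits into two regimes. If $A<-1$ ($\beta<b/a-1$), Remark~\ref{rem:4.2} gives $K(X)\sim\bigl(\int_0^\infty v^Ae(v)\,dv\bigr)X^{(A+1)/p}$; since $(A+1)/p<0$ the term $-K(X)/X$ beats both $P(\sigma)K(X)$ and the $1/X$ pole term, so $(a\sigma+1)^{1+(b/a-\beta-1)/p}Z_\beta(\sigma)\to-\int_0^\infty v^{\beta-b/a}e(v)\,dv$. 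If $-1<A<p-1$ ($b/a-1<\beta<p+b/a-1$), the $1/X$ pole term cancels the leading part of $-K(X)/X$ and Lemma~\ref{lem:4.4}(i) gives $\tilde K(X)\sim\bigl(\int_0^\infty v^A(1-e(v))\,dv\bigr)X^{(A+1)/p}$, so $(a\sigma+1)^{1+(b/a-\beta-1)/p}Z_\beta(\sigma)\to\int_0^\infty v^{\beta-b/a}(1-e(v))\,dv$; in either regime the integration-by-parts identities $\int_0^\infty v^Ae(v)\,dv=-\tfrac{p}{q(A+1)}\int_0^\infty v^{A-p}e(v)\,dv$ and $\int_0^\infty v^A(1-e(v))\,dv=\tfrac{p}{q(A+1)}\int_0^\infty v^{A-p}e(v)\,dv$ (the boundary terms vanish by flatness at $0$ and by the sign of $A+1$, respectively of $A+1-p$, at $\infty$) recast the limit as $\tfrac{pC_\beta}{q(-b/a+\beta+1)}$. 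In case (ii) ($A=-1$), Lemma~\ref{lem:4.1}(iii) gives $K(X)=-\tfrac1p\log X+d_0+O(X|\log X|)$ while $\tfrac{r_2^{b\sigma+\beta+1}}{b\sigma+\beta+1}=\tfrac{a}{bX}r_2^{bX/a}$, so the product term dominates like $\tfrac{a}{bX^2}$; since $\tilde C=\int_0^\infty v^{-p-1}e(v)\,dv=q/p$, the limit $a/b$ equals $\tfrac{ap\tilde C}{bq}$. In case (iii) ($A=p-1$), Lemma~\ref{lem:4.1}(iv) makes $P(\sigma)K(X)$ bounded while Lemma~\ref{lem:4.4}(ii) gives $\tilde K(X)\sim\tfrac1{pq}X|\log X|$, so $\bigl|\log(a\sigma+1)\bigr|^{-1}Z_\beta(\sigma)\to\tfrac1{pq}$. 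In case (iv) ($A>p-1$), every surviving piece is bounded with a finite limit (Lemma~\ref{lem:4.1}(iv) and the remark after Lemma~\ref{lem:4.4}), so $Z_\beta(\sigma)\to B_\beta$.

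The routine but delicate part is the error control: one must check that $R(y,\sigma)$ and the lower-order terms in the Lemma~\ref{lem:4.1}/\ref{lem:4.4} expansions are genuinely subdominant, and in case (iv) that they converge. For this the factor $(e(y))^{q-\varepsilon}$ supplies a fixed integrable majorant valid uniformly for $\sigma$ in a right neighbourhood of $-1/a$, or else one uses the monotone-convergence Lemmas~\ref{lem:4.5} and~\ref{lem:4.6}, since the integrand of $Z_\beta$ is pointwise nonincreasing in $\sigma$ where $|f|<1$. The genuine obstacle, however, is bookkeeping: in each case three or four terms of different orders compete, and pinning down the precise leading coefficient requires tracking the exact cancellations --- the pole term against $-K(X)/X$ in case (i), and $P(\sigma)K(X)$ against $-K(X)/X$ in case (ii) --- and invoking the integration-by-parts identities above to bring the limiting constants into the $C_\beta$, $\tilde C$ form stated in the theorem.
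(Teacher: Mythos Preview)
Your approach is correct and genuinely different from the paper's. The paper never performs the change of variable $x=e(y)w$; instead it keeps the $(x,y)$-variables, introduces an auxiliary parameter $\lambda\in(0,1)$, and splits $V$ into three regions $U_1(\lambda)\cup U_2(\lambda)\cup U_3(\lambda)$ according to whether $\lambda^q x^q$ lies above $y^{-q\delta}e^{-1/|y|^p}$, between that and $e^{-1/|y|^p}$, or below. On $U_1(\lambda)$ it reuses the integration-by-parts-in-$y$ machinery from Section~\ref{sec:5} (Lemma~\ref{lem:5.4} and the integrals $W_\beta^{(j)}$); on $U_2(\lambda),U_3(\lambda)$ it computes directly. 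Each piece is then trapped between $\lambda$-dependent upper and lower bounds (using $(1+\lambda^q)^\sigma\le\Psi\le 1$ and its analogues), and only after sending $\lambda\to0$ do the two bounds coincide to give the stated constants. No integration-by-parts identity relating $\int_0^\infty v^A e(v)\,dv$ to $C_\beta$ is ever needed, because the paper's change of variable $u=E^{-1}(\lambda x)$ in $\tilde W_\beta^{(1)}$ produces $C_\beta$ directly.

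Your route is more economical: the substitution $x=e(y)w$ separates the variables cleanly, yields the exact identity $Z_\beta(\sigma)=P(\sigma)K(X)+\tfrac{r_1^X}{X}\tfrac{r_2^{b\sigma+\beta+1}}{b\sigma+\beta+1}-\tfrac{K(X)}{X}+Q(\sigma)$ with no sandwich parameter, and simultaneously re-proves Theorem~\ref{thm:5.2}. The price is the bookkeeping you flag at the end --- the cancellations between the $1/X$ pole term and $K(X)/X$, and the integration-by-parts identities $\int_0^\infty v^A e(v)\,dv=-\tfrac{p}{q(A+1)}C_\beta$ (for $A<-1$) and $\int_0^\infty v^A(1-e(v))\,dv=\tfrac{p}{q(A+1)}C_\beta$ (for $-1<A<p-1$) needed to put the constants into the stated form --- together with the substitution $t=1/(qu^p)$ giving $\tilde C=q/p$ in case~(ii). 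All of these check out, and your dominated-convergence control of $Q(\sigma)$ via the majorant $y^{-b/a+\beta}(e(y))^q$ is adequate. The paper's $\lambda$-sandwich, by contrast, is heavier but arguably more robust if one later wants to perturb the flat term.
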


\subsection{Decompositions of associated integrals with a parameter}
In order to investigate the asymptotic behavior of $Z_{\b}(\sigma)$
as $\sigma \to -1/a+0$,
we decompose $Z_{\b}(s)$ into three integrals
with a parameter $\lambda$.

Let $\lambda\in (0,1)$.
From \eqref{eqn:6.7.1},
a positive real number $\delta$ is taken so that
\begin{equation}\label{eqn:6.7}
\max\left\{\frac{p+1}{q},\frac{b/a+2p-1}{q}\right\}<\delta<\frac{p}{qr_2^p}.
\end{equation}
We decompose the set $V$ into the following three sets:
\begin{equation}
\begin{split}
U_{1}(\lambda)&:=U_{1}^{(\delta)}(\lambda)=\{(x,y)\in V:\lambda^q x^q \geq y^{-q\delta}e^{-1/|y|^p}(\Leftrightarrow\lambda x \geq E(y))\},\\
U_{2}(\lambda)&:=U_{2}^{(\delta)}(\lambda)=\{(x,y)\in V:e^{-1/|y|^p}\leq \lambda^q x^q<y^{-q\delta}e^{-1/|y|^p} (\Leftrightarrow e(y) \leq \lambda x < E(y))\},\\
U_{3}(\lambda)&=\{(x,y)\in V:\lambda^q x^q <e^{-1/|y|^p}(\Leftrightarrow \lambda x < e(y))\},
\end{split}
\end{equation}
where $E(\cdot)$ is the smooth function defined in Section \ref{sec:5.1}
and $e(\cdot)$ is as in \eqref{eqn:4.1}.
We remark that
$U_1^{(\epsilon)}(\lambda)\subset U_1^{(\epsilon)}$ for $\epsilon>0$ and $U_3(\lambda) \supset U_3$, where $U_1^{(\cdot)}$ and $U_3$ are as in \eqref{eqn:5.9}.

Now, the integral $Z_{\b}(s)$ is expressed as
\begin{equation}\label{eqn:6.8}
Z_{\b}(s)=\sum_{j=1}^3 Z_{\b}^{(j)}(s;\lambda),
\end{equation}
where
\begin{equation}
Z_{\b}^{(j)}(s;\lambda)=\int_{U_j(\lambda)}\left|x^ay^b+x^{a-q}y^{b}e^{-1/|y|^p}\right|^s y^{\b}dxdy
\quad
\mbox{for $j=1,2,3$.}
\end{equation}
In a similar fashion to that of Section \ref{sec:5},
the integrals $Z_{\b}^{(j)}(s;\lambda)$ can be meromorphically continued to the half-plane
${\rm Re}(s)>-1/a$ as follows.

By decomposing the integral region $U_{1}(\lambda)$
into the following two sets:
\begin{equation}\label{eqn:6.10}
\left\{(x,y)\in U_1(\lambda):x\leq \frac{1}{\lambda}E(\tau(\lambda r_1))\right\}, \quad
\left\{(x,y)\in U_1(\lambda):x> \frac{1}{\lambda}E(\tau(\lambda r_1))\right\},
\end{equation}
we have
\begin{equation}
\begin{split}
Z_{\b}^{(1)}(s;\lambda)
=&\int_{0}^{\frac{1}{\lambda}E(\tau(\lambda r_1))}
\left(\int_0^{E^{-1}(\lambda x)}x^{as}y^{bs+\b}
\Psi(x,y;s)dy\right)dx\\
&+\int_{\frac{1}{\lambda}E(\tau(\lambda r_1))}^{r_1}
\left(\int_0^{r_2}x^{as}y^{bs+\b}
\Psi(x,y;s)dy\right)dx,
\end{split}
\end{equation}
where $\tau$ is as in \eqref{eqn:5.7} and $\Psi$ is as in \eqref{eqn:5.12}.
We remark that if $\lambda r_1< E(r_2)$, then the second set in \eqref{eqn:6.10} is empty.
From Lemma \ref{lem:5.4}, \eqref{eqn:5.15}, \eqref{eqn:6.7} and noticing $U_1^{(\delta)}(\lambda)\subset U_1^{(\delta)}$, 
integration by parts with respect to the variable $y$
gives
\begin{equation}\label{eqn:6.12}
\begin{split}
Z_{\b}^{(1)}(s;\lambda)
=&\frac{1}{bs+\b+1}W_{\b}^{(1)}(s;\lambda)-\frac{ps}{bs+\b+1}W_{\b}^{(2)}(s;\lambda)\\
&+\frac{r_2^{bs+\b+1}}{bs+\b+1}W_{\b}^{(3)}(s;\lambda)-\frac{ps}{bs+\b+1}W_{\b}^{(4)}(s;\lambda)
\end{split}
\end{equation}
with
\begin{equation}\label{eqn:6.13}
\begin{split}
W_{\b}^{(1)}(s;\lambda)&=\int_{0}^{\frac{1}{\lambda}E(\tau(\lambda r_1))}x^{as}(E^{-1}(\lambda x))^{bs+\b+1}\Psi(x,E^{-1}(\lambda x);s) dx,\\
W_{\b}^{(2)}(s;\lambda)&=\int_{0}^{\frac{1}{\lambda}E(\tau(\lambda r_1))}
\left(\int_0^{E^{-1}(\lambda x)}x^{as-q}y^{bs-p+\b}(e(y))^{q}
\Psi(x,y;s-1)dy\right)dx,\\
W_{\b}^{(3)}(s;\lambda)&=\int_{\frac{1}{\lambda}E(\tau(\lambda r_1))}^{r_1}x^{as}\Psi(x,r_2;s) dx,\\
W_{\b}^{(4)}(s;\lambda)&=\int_{\frac{1}{\lambda}E(\tau(\lambda r_1))}^{r_1}
\left(\int_0^{r_2}x^{as-q}y^{bs-p+\b}(e(y))^{q}\Psi(x,y;s-1)
dy\right)dx.
\end{split}
\end{equation}
The convergence of each integral $W_{\b}^{(j)}(s;\lambda)$
on the half-plane ${\rm Re}(s)>-1/a$ is shown 
in a similar fashion to the case of the integral $W_{\a\b}^{(j)}(\psi)(s)$, where $W_{\a\b}^{(j)}(\psi)(s)$ is as in \eqref{eqn:5.22}.
(See the proof of Lemma \ref{lem:5.6}.)
Hence, each $W_{\b}^{(j)}(s;\lambda)$
becomes a holomorphic function there.
From \eqref{eqn:6.12},
$Z_{\b}^{(1)}(s;\lambda)$ can be analytically continued as a meromorphic function, denoted again by $Z_{\b}^{(1)}(s;\lambda)$, to the half-plane ${\rm Re}(s)>-1/a$.
On the other hand,
we can show that $Z_{\b}^{(2)}(s;\lambda)$ and $Z_{\b}^{(3)}(s;\lambda)$ become holomorphic functions on the half-plane ${\rm Re}(s)>-1/a$
in a similar fashion to the proof of Lemma \ref{lem:5.7}.

To prove Theorem \ref{thm:6.1},
we investigate the asymptotic behavior of the restrictions of the integrals $Z_{\b}^{(j)}(s;\lambda)$ for $j=1,2,3$ to the real axis as $s\to -1/a+0$.

\subsection{Asymptotics of $Z_{\b}^{(1)}(s;\lambda)$}

\begin{lemma}\label{lem:6.2}
We have the following.
\begin{enumerate}
\item
If $0\leq \b< b/a-1$, then
\begin{equation}
\begin{split}
\frac{C_{\b}}{(-b/a+\b+1)}
&\leq\liminf_{\sigma\to -1/a+0}(a\sigma+1)^{1+\frac{b/a-\b-1}{p}}\cdot Z_{\b}^{(1)}(\sigma;\lambda)\\
&\leq
\limsup_{\sigma\to -1/a+0}(a\sigma+1)^{1+\frac{b/a-\b-1}{p}}\cdot Z_{\b}^{(1)}(\sigma;\lambda)
\leq
\frac{(1+\lambda^q)^{-1/a}C_{\b}}{(-b/a+\b+1)},
\end{split}
\end{equation}
where $C_{\b}$ is as in \eqref{eqn:6.3}.
\item
If $\b=b/a-1$,  
then
\begin{equation}
\begin{split}
\frac{(1+\lambda^q)^{-1/a}a}{b}
&\leq\liminf_{\sigma\to -1/a+0}(a\sigma+1)^{2}\cdot Z_{\b}^{(1)}(\sigma;\lambda)\\
&\leq
\limsup_{\sigma\to -1/a+0}(a\sigma+1)^{2}\cdot Z_{\b}^{(1)}(\sigma;\lambda)
\leq
\frac{a}{b}.
\end{split}
\end{equation}
\item
If $b/a-1 < \b< p+b/a-1$, then
\begin{equation}
\begin{split}
\frac{(1+\lambda^q)^{-1/a}C_{\b}}{(-b/a+\b+1)}
&\leq\liminf_{\sigma\to -1/a+0}(a\sigma+1)^{1+\frac{b/a-\b-1}{p}}\cdot Z_{\b}^{(1)}(\sigma;\lambda)\\
&\leq
\limsup_{\sigma\to -1/a+0}(a\sigma+1)^{1+\frac{b/a-\b-1}{p}}\cdot Z_{\b}^{(1)}(\sigma;\lambda)
\leq
\frac{C_{\b}}{(-b/a+\b+1)},
\end{split}
\end{equation}
where $C_{\b}$ is as in \eqref{eqn:6.3}.
\item
If $\b =p+b/a-1$,
then
\begin{equation}
\begin{split}
\frac{(1+\lambda^q)^{-1/a}}{pq}
&\leq\liminf_{\sigma\to -1/a+0}|\log(a\sigma +1)|^{-1}\cdot Z_{\b}^{(1)}(\sigma;\lambda)\\
&\leq
\limsup_{\sigma\to -1/a+0}|\log(a\sigma +1)|^{-1}\cdot Z_{\b}^{(1)}(\sigma;\lambda)
\leq
\frac{1}{pq}.
\end{split}
\end{equation}
\item
If $\b>p+b/a-1$, 
then there exists a constant $B_{\b}^{(1)}(\lambda)$ which
is independent of $\sigma$, such that
$\lim_{\sigma\to -1/a+0}Z_{\b}^{(1)}(\sigma;\lambda)=B_{\b}^{(1)}(\lambda)$.
\end{enumerate}
\end{lemma}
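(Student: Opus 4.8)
The plan is to work from the representation \eqref{eqn:6.12}, which expresses $Z_\beta^{(1)}(s;\lambda)$ as $\frac{1}{bs+\beta+1}W_\beta^{(1)}(s;\lambda)$ plus three terms built from $W_\beta^{(2)},W_\beta^{(3)},W_\beta^{(4)}$, and to show that as $\sigma\to -1/a+0$ only the first term carries the asserted behaviour, the rest being of strictly lower order. For $j=2,3,4$ one argues as in \eqref{eqn:5.23}--\eqref{eqn:5.24} and the proof of Lemma \ref{lem:5.6}: using $|\Psi(x,y;\cdot)|\le 1$ on the pertinent half-plane and the defining inequality of $U_1(\lambda)$, which gives $(e(y))^q\le \lambda^q x^q y^{q\delta}$ there and thus absorbs the factor $x^{-q}$ occurring in $W_\beta^{(2)}$ and $W_\beta^{(4)}$, each $W_\beta^{(j)}(\sigma;\lambda)$ is dominated by an integral which, thanks to the choice \eqref{eqn:6.7} of $\delta$, falls under Lemma \ref{lem:4.1}(iv) (for $j=3,4$ the $x$-integration moreover runs over a fixed compact interval bounded away from $0$); hence, by dominated convergence and the flatness of $e(\cdot)$, $\lim_{\sigma\to-1/a+0}W_\beta^{(j)}(\sigma;\lambda)$ exists and is finite. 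Since $\frac{1}{bs+\beta+1}$ stays bounded when $\beta\neq b/a-1$ and is $\sim \frac{a}{b(a\sigma+1)}$ when $\beta=b/a-1$, the total contribution of these three terms is $O(1)$ in cases (i), (iii)--(v) and $O((a\sigma+1)^{-1})$ in case (ii).

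The heart of the matter is the analysis of $W_\beta^{(1)}(\sigma;\lambda)$. The substitution $u=E^{-1}(\lambda x)$, under which $x=E(u)/\lambda$, $dx=\lambda^{-1}E'(u)\,du$ with $E'(u)=u^{-\delta-1}e(u)\bigl(\frac{p}{q}u^{-p}-\delta\bigr)$, and $\Psi(x,E^{-1}(\lambda x);\sigma)=(1+\lambda^q u^{q\delta})^\sigma$, turns it, with $X=a\sigma+1$, into
\[
W_\beta^{(1)}(\sigma;\lambda)=\lambda^{-a\sigma-1}\int_0^{\tau(\lambda r_1)} u^{-\delta a\sigma+b\sigma+\beta-\delta}\,(e(u))^{X}\Bigl(\frac{p}{q} u^{-p}-\delta\Bigr)(1+\lambda^q u^{q\delta})^\sigma\,du .
\]
The $u^{-p}$-part of the bracket dominates, and after substituting $\sigma=(X-1)/a$ its $u$-exponent becomes $(\frac{b}{a}-\delta)X+(\beta-b/a-p)$, so that the relevant parameter is $A:=\beta-b/a-p$, with $-(A+1)/p=1+\frac{b/a-\beta-1}{p}$. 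Because $\lambda^{-a\sigma-1}\to 1$ and $(1+\lambda^q)^\sigma\le(1+\lambda^q u^{q\delta})^\sigma\le 1$ on $[0,r_2]$ for $\sigma\le 0$, I would sandwich $W_\beta^{(1)}(\sigma;\lambda)$, modulo lower-order terms, between $\frac{p}{q}\lambda^{-a\sigma-1}(1+\lambda^q)^\sigma K(X;\lambda)$ and $\frac{p}{q}\lambda^{-a\sigma-1}K(X;\lambda)$, where $K(X;\lambda)=\int_0^{\tau(\lambda r_1)}u^{(\frac{b}{a}-\delta)X+\beta-b/a-p}(e(u))^X\,du$, and then invoke Lemma \ref{lem:4.1} with Remark \ref{rem:4.2}: for $A<-1$, $A\neq-1$ (cases (i), (iii)) one has $K(X;\lambda)\sim C_\beta X^{(A+1)/p}$; for $A=-p-1$, i.e.\ $\beta=b/a-1$ (case (ii)), $K(X;\lambda)\sim \tilde C X^{-1}$; for $A=-1$, i.e.\ $\beta=p+b/a-1$ (case (iv)), $K(X;\lambda)\sim\frac{1}{p}|\log X|$ by Lemma \ref{lem:4.1}(iii); and for $A>-1$ (case (v)) $K(X;\lambda)$ has a finite limit by Lemma \ref{lem:4.1}(iv), whence $W_\beta^{(1)}$, and then $Z_\beta^{(1)}$, converges, which produces the constant $B_\beta^{(1)}$.

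It then remains, in cases (i)--(iv), to multiply through by $\frac{1}{bs+\beta+1}$ (with limit $\frac{1}{-b/a+\beta+1}$ in cases (i), (iii), (iv) and asymptotics $\frac{a}{b(a\sigma+1)}$ in case (ii)), by the sandwiching factors $\lambda^{-a\sigma-1}(1+\lambda^q)^\sigma\to(1+\lambda^q)^{-1/a}$ and $\lambda^{-a\sigma-1}\to1$, and to add back the lower-order $W_\beta^{(2)},W_\beta^{(3)},W_\beta^{(4)}$ contributions; normalising by the indicated power of $(a\sigma+1)$ (or by $|\log(a\sigma+1)|^{-1}$) collapses the remainders to $0$ and squeezes $\liminf$ and $\limsup$ between the two constants. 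In case (i) the quantity $-b/a+\beta+1$ is negative, so multiplication by the prefactor reverses the two inequalities — this is precisely why the factor $(1+\lambda^q)^{-1/a}$ sits on the upper bound in case (i) but on the lower bound in cases (ii)--(iv). The step I expect to be the main obstacle is making the bound on $W_\beta^{(2)}$ (and $W_\beta^{(4)}$) fully rigorous, because the factor $x^{as-q}$ is genuinely non-integrable near $x=0$ at $s=-1/a$: one must exploit both the cancellation $(e(y))^q\le\lambda^q x^q y^{q\delta}$ from $U_1(\lambda)$ and the (merely logarithmic) smallness of $E^{-1}(\lambda x)$, a sufficiently high power of which — furnished by the large $\delta$ in \eqref{eqn:6.7} — renders the remaining $x$-integral convergent uniformly up to $\sigma=-1/a$. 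The convergence of all the improper integrals that appear, including those defining $C_\beta$ and $\tilde C$, is as usual a consequence of the flatness of $e(\cdot)$ at the origin.
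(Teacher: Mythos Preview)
Your proposal is correct and follows essentially the same route as the paper: the paper's proof of Lemma~\ref{lem:6.2} is literally ``From \eqref{eqn:6.12}, Lemma~\ref{lem:6.3} and Lemma~\ref{lem:6.4}, we obtain the lemma,'' where Lemma~\ref{lem:6.3} is exactly your sandwiching analysis of $W_\b^{(1)}$ via the substitution $u=E^{-1}(\lambda x)$ together with Lemma~\ref{lem:4.1}, and Lemma~\ref{lem:6.4} is exactly your boundedness/convergence argument for $W_\b^{(2)},W_\b^{(3)},W_\b^{(4)}$ using $(e(y))^q\le\lambda^q x^q y^{q\delta}$ on $U_1(\lambda)$ and the choice \eqref{eqn:6.7} of~$\delta$. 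The only organizational difference is that the paper packages these two steps as separate lemmas, and for the existence of the limits in case~(v) and in Lemma~\ref{lem:6.4} the paper explicitly invokes the monotone-convergence Lemma~\ref{lem:4.5} rather than dominated convergence; both justifications work here (your dominating functions are indeed integrable over the relevant region, as one checks after the substitution $u=E^{-1}(\lambda x)$).
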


\begin{remark}
In the case (i) of Lemma \ref{lem:6.2}, $Z_{\b}^{(1)}(s;\lambda)$ is a meromorphic function on the half-plane ${\rm Re}(s)>-1/a$,
which has only one simple pole at $s=-(\b+1)/b$.
Hence, the limit of $Z_{\b}^{(1)}(\sigma;\lambda)$ as $\sigma\to -1/a+0$
takes a negative value in the case even though $Z_{\b}^{(1)}(s;\lambda)$ is originally defined by an integral whose integrand is nonnegative.
\end{remark}

First, we investigate the asymptotic behavior of the restriction of the integral $W_{\b}^{(1)}(s;\lambda)$ to the real axis as $s\to -1/a+0$, where $W_{\b}^{(1)}(s;\lambda)$ is as in \eqref{eqn:6.13}.

\begin{lemma}\label{lem:6.3}
We have the following.
\begin{enumerate}
\item
If $0\leq \b <p+b/a-1$,
then
\begin{equation}
\begin{split}
(1+\lambda^q)^{-1/a}C_{\b}
&\leq\liminf_{\sigma\to -1/a+0}(a\sigma+1)^{1+\frac{b/a-\b-1}{p}}\cdot W_{\b}^{(1)}(\sigma;\lambda)\\
&\leq
\limsup_{\sigma\to -1/a+0}(a\sigma+1)^{1+\frac{b/a-\b-1}{p}}\cdot W_{\b}^{(1)}(\sigma;\lambda)
\leq
C_{\b},
\end{split}
\end{equation}
where $C_{\b}$ is as in \eqref{eqn:6.3}.
\item
If $\b =p+b/a-1$,
then
\begin{equation}
\begin{split}
\frac{(1+\lambda^q)^{-1/a}}{q}
&\leq\liminf_{\sigma\to -1/a+0}|\log(a\sigma +1)|^{-1}\cdot W_{\b}^{(1)}(\sigma;\lambda)\\
&\leq
\limsup_{\sigma\to -1/a+0}|\log(a\sigma +1)|^{-1}\cdot W_{\b}^{(1)}(\sigma;\lambda)
\leq
\frac{1}{q}.
\end{split}
\end{equation}
\item
If $\b>p+b/a-1$, then there exists a constant $\tilde{B}_{\b}^{(1)}(\lambda)$ which
is independent of $\sigma$, such that
$\lim_{\sigma\to -1/a+0}W_{\b}^{(1)}(\sigma;\lambda)=\tilde{B}_{\b}^{(1)}(\lambda)$.
\end{enumerate}
\end{lemma}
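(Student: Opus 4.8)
The plan is to reduce all three assertions to the asymptotic expansions of Lemma~\ref{lem:4.1} (and Remark~\ref{rem:4.2}) by a single explicit substitution in the one-dimensional integral $W_{\b}^{(1)}(s;\lambda)$ displayed in the first line of \eqref{eqn:6.13}. First I would change variables by $u=E^{-1}(\lambda x)$, i.e.\ $\lambda x=E(u)=u^{-\delta}e(u)$; this is legitimate because $E$ is strictly increasing on $[0,r_2]$ for the chosen $\delta$ and $r_2$ (see \eqref{eqn:6.7}, \eqref{eqn:6.7.1}). Using $e^{-1/|u|^p}=(e(u))^q$ one computes $\Psi(E(u)/\lambda,u;s)=(1+\lambda^q u^{q\delta})^{s}$, and with $dx=\lambda^{-1}E'(u)\,du=\lambda^{-1}u^{-\delta-1}e(u)\bigl(-\delta+\tfrac{p}{q}u^{-p}\bigr)\,du$, collecting the powers of $u$ and of $e(u)$ gives, for real $s=\sigma$ and $X:=a\sigma+1$,
\begin{equation*}
W_{\b}^{(1)}(\sigma;\lambda)=\lambda^{-X}\Bigl(-\delta\,I_1(\sigma)+\tfrac{p}{q}\,I_2(\sigma)\Bigr),
\qquad
I_j(\sigma)=\int_0^{\tau(\lambda r_1)}u^{A_j+BX}(e(u))^{X}(1+\lambda^q u^{q\delta})^{\sigma}\,du ,
\end{equation*}
where $A_1=\b-b/a$, $A_2=A_1-p=\b-b/a-p$ and $B=b/a-\delta$. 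Since $0\le u\le\tau(\lambda r_1)\le r_2<1$ we have $1\le 1+\lambda^q u^{q\delta}\le 1+\lambda^q$, so for $\sigma<0$ the last factor lies between $(1+\lambda^q)^{\sigma}$ and $1$, whence $(1+\lambda^q)^{\sigma}\,\tilde I_j(\sigma)\le I_j(\sigma)\le\tilde I_j(\sigma)$, where $\tilde I_j(\sigma)=\int_0^{\tau(\lambda r_1)}u^{A_j+BX}(e(u))^{X}\,du$ is exactly a $K(X)$ as in \eqref{eqn:4.2.0} with $A=A_j$, $r=\tau(\lambda r_1)$. Moreover $\lambda^{-X}\to1$ and $(1+\lambda^q)^{\sigma}\to(1+\lambda^q)^{-1/a}$ as $\sigma\to-1/a+0$.

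For the dominant integral $I_2$ I would invoke Lemma~\ref{lem:4.1}: in case (i), $\b<p+b/a-1$ forces $A_2<-1$, and Remark~\ref{rem:4.2} yields $\lim_{X\to+0}X^{-(A_2+1)/p}\tilde I_2(\sigma)=\int_0^{\infty}v^{A_2}e(v)\,dv=C_{\b}$ (see \eqref{eqn:6.3}), with $-(A_2+1)/p=1+(b/a-\b-1)/p$, and this holds whether or not $(A_2+1)/p$ is an integer; in case (ii), $A_2=-1$ and Lemma~\ref{lem:4.1}(iii) gives $|\log X|^{-1}\tilde I_2(\sigma)\to1/p$; in case (iii), $A_2>-1$ and Lemma~\ref{lem:4.1}(iv) gives $\tilde I_2(\sigma)\to(\tau(\lambda r_1))^{A_2+1}/(A_2+1)$. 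The subordinate integral $I_1$ must then be seen to be of strictly lower order: in cases (i)--(ii), $A_1=A_2+p$, so either $A_1<-1$, in which case Remark~\ref{rem:4.2} gives $\tilde I_1(\sigma)\asymp X^{(A_1+1)/p}=X^{-(b/a-\b-1)/p}$ and hence $X^{1+(b/a-\b-1)/p}\tilde I_1(\sigma)\asymp X\to0$, or $A_1\ge-1$, in which case $\tilde I_1(\sigma)=O(|\log X|)$ by Lemma~\ref{lem:4.1}(iii)--(iv), again killed by the positive power $X^{1+(b/a-\b-1)/p}$ (resp.\ by $|\log X|^{-1}$ in case (ii)). Inserting these estimates, together with the sandwich and $\lambda^{-X}\to1$, into the displayed identity for $W_{\b}^{(1)}$ gives the two-sided bounds (i) and (ii): the $-\delta I_1$ term drops out in the limit, and the $\tfrac{p}{q}I_2$ term lies between $(1+\lambda^q)^{-1/a}$ times and $1$ times the stated constant.

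Case (iii) I would handle separately by dominated convergence applied directly to $I_1(\sigma)$ and $I_2(\sigma)$: since $A_1,A_2>-1$, for $X$ small the integrands are bounded by $u^{A_j-|B|\varepsilon}$ (integrable near $0$), using $(e(u))^{X}\le1$ and $(1+\lambda^q u^{q\delta})^{\sigma}\le1$, and they converge pointwise to $u^{A_j}(1+\lambda^q u^{q\delta})^{-1/a}$; hence $I_j(\sigma)$ converges, so $W_{\b}^{(1)}(\sigma;\lambda)$ converges to a constant $\tilde B_{\b}^{(1)}$ depending on $f,\lambda,\b$ but not on $\sigma$.

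The step I expect to be the main obstacle is the first one: performing the substitution cleanly, checking that the choices of $\delta$ (via \eqref{eqn:6.7}) and $r_2$ (via \eqref{eqn:6.7.1}) are mutually consistent so that $E^{-1}$ and $\tau$ are defined on the relevant ranges, and---most importantly---verifying in every sub-case that the lower-order contribution $-\delta I_1$ is genuinely swept away. Once the integrals are reduced to the form $K(X)$ of \eqref{eqn:4.2.0}, the remaining work is mechanical and is supplied by Lemma~\ref{lem:4.1}.
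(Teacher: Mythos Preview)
Your proposal is correct and follows essentially the same approach as the paper: the paper first sandwiches $\Psi$ by $(1+\lambda^q)^{\sigma}$ and $1$ and then performs the substitution $u=E^{-1}(\lambda x)$ to reduce to two integrals of the form $K(X)$ in Lemma~\ref{lem:4.1}, whereas you perform the substitution first (obtaining the explicit factor $(1+\lambda^q u^{q\delta})^{\sigma}$) and then sandwich, which is the same argument with the two steps interchanged. The only other minor difference is in part~(iii), where the paper establishes boundedness via the sandwich and then invokes the monotone-convergence Lemma~\ref{lem:4.5}, while you appeal to dominated convergence directly; both are valid.
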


\begin{proof}
For simplicity, we denote
\begin{equation}\label{eqn:6.20.1}
X:=a\sigma+1.
\end{equation}
It is clear that $X\to +0$ if and only if $\sigma \to -1/a+0$.

(i)
We notice that for $\sigma<0$ and $(x,y)\in U_1(\lambda)\cup U_2(\lambda)$,
\begin{equation}\label{eqn:6.18}
(1+\lambda^q)^{\sigma} \leq \Psi(x,y;\sigma)\leq 1.
\end{equation}
Thus, $W_{\b}^{(1)}(\sigma;\lambda)$
can be estimated by using the integral
\begin{equation}\label{eqn:6.19}
\tilde{W}_{\b}^{(1)}(\sigma;\lambda)=\int_{0}^{\frac{1}{\lambda}E(\tau(\lambda r_1))}x^{a\sigma}(E^{-1}(\lambda x))^{b\sigma+\b+1}dx.
\end{equation}
Indeed, 
the following inequalities hold from \eqref{eqn:6.18}:
\begin{equation}\label{eqn:6.20}
(1+\lambda^q)^{\sigma}\tilde{W}_{\b}^{(1)}(\sigma;\lambda)
\leq W_{\b}^{(1)}(\sigma;\lambda)
\leq \tilde{W}_{\b}^{(1)}(\sigma;\lambda).
\end{equation}
Changing the integral variable in \eqref{eqn:6.19} by $u=E^{-1}(\lambda x)$,
we have
\begin{equation}\label{eqn:6.21}
\begin{split}
\tilde{W}_{\b}^{(1)}(\sigma;\lambda)=&
-\frac{\delta}{\lambda^{X}}\int_{0}^{\tau(\lambda r_1)}u^{-b/a+\b+(b/a-\delta)X}
(e(u))^{X}du\\
&+\frac{p}{\lambda^{X}q}
\int_{0}^{\tau(\lambda r_1)}u^{-b/a-p+\b+(b/a-\delta)X}
(e(u))^{X}du.
\end{split}
\end{equation}
Lemma \ref{lem:4.1} and Remark \ref{rem:4.2} give that
if $0\leq \b <p+b/a-1$,
then
\begin{equation}\label{eqn:6.22}
\lim_{X\to+0}X^{1+\frac{b/a-\b-1}{p}}\cdot \tilde{W}_{\b}^{(1)}(\sigma;\lambda)
=q^{\frac{b/a-\b-1}{p}}\Gamma\left(1+\frac{b/a-\b-1}{p}\right).
\end{equation}
From \eqref{eqn:6.20} and \eqref{eqn:6.22},
we obtain the desired inequalities.

(ii)
Let $\b =p+b/a-1$.
Applying Lemma \ref{lem:4.1} to \eqref{eqn:6.21}, 
we have
\begin{equation}
\lim_{X\to+0}|\log X|^{-1}\cdot \tilde{W}_{\b}^{(1)}(\sigma;\lambda)
=\frac{1}{q}.
\end{equation}
In a similar fashion to the case (i),
we show the assertion.

(iii)
Assume $\b>p+b/a-1$.
From applying Lemma \ref{lem:4.1} to the two integrals in the right-hand side in \eqref{eqn:6.21},
the existence of the limit $\lim_{X\to+0}\tilde{W}_{\b}^{(1)}(\sigma;\lambda)$ is shown.
Then the boundedness of $\limsup_{X \to +0}W_{\b}^{(1)}(\sigma;\lambda)$
is obtained from \eqref{eqn:6.20}.
Applying Lemma \ref{lem:4.5} to $W_{\b}^{(1)}(\sigma;\lambda)$,
we have the convergence of $W_{\b}^{(1)}(\sigma;\lambda)$ as $X\to +0$.
\end{proof}

Next, we consider the integrals $W_{\b}^{(j)}(s;\lambda)$ for $j=2,3,4$, where $W_{\b}^{(j)}(s;\lambda)$ are as in \eqref{eqn:6.13}.
We remark that the restrictions of these integrals
to the real axis are nonnegative.

\begin{lemma}\label{lem:6.4}
For each $j=2,3,4$,
there exists a constant $\tilde{B}_{\b}^{(j)}(\lambda)$ which 
is independent of $\sigma$, such that
$\lim_{\sigma\to -1/a+0}W_{\b}^{(j)}(\sigma;\lambda)=\tilde{B}_{\b}^{(j)}(\lambda)$.
\end{lemma}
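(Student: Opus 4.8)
The plan is to show that each $W_{\b}^{(j)}(\sigma;\lambda)$ for $j=2,3,4$ converges as $\sigma\to -1/a+0$ by combining an explicit upper bound for the integrands with the monotone-convergence machinery of Lemma \ref{lem:4.5}. First I would record that on $U_1(\lambda)\subset U_1$ we have $|\Psi(x,y;s-1)|\le (1+\lambda^q)^{\sigma-1}\le 1$ when $\sigma\le 1$, and $\Psi(x,r_2;s)\le 1$ for $\sigma\le 0$, so the restrictions to the real axis are nonnegative and are dominated by the parameter-free integrands obtained by deleting the $\Psi$-factor. Since $(e(y))^q\le x^q y^{q\delta}$ on $U_1$ (equivalently on $U_1(\lambda)$, using $\lambda<1$), the integrand of $W_{\b}^{(2)}(\sigma;\lambda)$ is bounded by $x^{a\sigma+\a}y^{b\sigma-p+\b+q\delta}$ and similarly for $W_{\b}^{(4)}(\sigma;\lambda)$, exactly as in the estimates $\tilde W_{\a\b}^{(j)}$ of \eqref{eqn:5.23}. (Here $\a=0$.)

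Next I would check integrability and boundedness of the bounding integrals as $\sigma\to -1/a$. For $j=3$, the function $W_{\b}^{(3)}(\sigma;\lambda)$ is an integral of $x^{as}\Psi(x,r_2;s)$ over an interval bounded away from $x=0$ (namely $[\tfrac1\lambda E(\tau(\lambda r_1)),r_1]$), so by Lebesgue's theorem it simply converges to $\int x^{-1}\Psi(x,r_2;-1/a)\,dx$; this gives $\tilde B_{\b}^{(3)}$ directly. For $j=2,4$, by the choice of $\delta$ in \eqref{eqn:6.7} one has $b\sigma-p+\b+q\delta>-1$ uniformly for $\sigma$ near $-1/a$, so the $y$-integral converges, while the $x$-integral converges because $a\sigma+\a+1=a\sigma+1>0$; hence $\limsup_{\sigma\to-1/a+0}W_{\b}^{(j)}(\sigma;\lambda)<\infty$ and moreover $W_{\b}^{(j)}(\sigma;\lambda)\ge 0$ on the real axis.

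To upgrade boundedness to convergence I would invoke Lemma \ref{lem:4.5} with $U$ the relevant subset of $U_1(\lambda)$, $du=dx\,dy$, $X=a\sigma+1$, and $F$ the integrand of $W_{\b}^{(j)}$ written as a function of $X$. The point to verify is that $F(\cdot,X)$ is monotone decreasing in $X$ for $X$ small: the factor $x^{as}=x^{X}\cdot x^{-1}$ is (for $0<x<1$) decreasing in $X$, the factor $y^{bs+\cdots}=y^{(b/a)X}\cdot(\text{const in }X)$ is likewise decreasing for $0<y<1$, $(e(y))^q$ is independent of $X$, and $|\Psi(x,y;s-1)|=\bigl(1+x^{-q}e^{-1/|y|^p}\bigr)^{\sigma-1}$ is decreasing in $X$ since the base exceeds $1$ and the exponent $\sigma-1=(X-1)/a-1$ increases with $X$; for $j=3$ the same applies with $\Psi(x,r_2;s)$. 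Thus all three hypotheses of Lemma \ref{lem:4.5} hold, $\limsup$ is finite by the previous paragraph, and we conclude $\lim_{\sigma\to-1/a+0}W_{\b}^{(j)}(\sigma;\lambda)=\tilde B_{\b}^{(j)}$ for some constant $\tilde B_{\b}^{(j)}$ independent of $\sigma$.

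The main obstacle is the monotonicity verification for the $\Psi$-factor: one must be slightly careful that $r_1,r_2<1$ (so $x,y\in(0,1)$) makes every power of $x$ and $y$ that carries an $X$-dependence decreasing in $X$, and that the base of $\Psi$ is genuinely $\ge 1$ on $U_1(\lambda)$, which is where the condition $\lambda<1$ together with the defining inequality $\lambda x\ge E(y)\ge e(y)$ is used. Once the integrands are seen to be nonnegative, $\sigma$-monotone, and uniformly bounded in $L^1$, Lemma \ref{lem:4.5} closes the argument with no further computation.
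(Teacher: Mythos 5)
Your overall strategy matches the paper's: bound $W_{\b}^{(j)}(\sigma;\lambda)$ from above by a $\Psi$-free integral, deduce $\limsup_{\sigma\to-1/a+0}W_{\b}^{(j)}(\sigma;\lambda)<\infty$, and then upgrade boundedness to convergence by the monotone-convergence Lemma~\ref{lem:4.5}. Your treatment of $j=3$ via direct dominated convergence on a compact interval bounded away from $x=0$ is in fact a bit cleaner than the paper's, which runs $j=3$ through the same $\tilde W$-comparison.

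However, your explicit verification of the monotonicity hypothesis of Lemma~\ref{lem:4.5} for $j=2,4$ contains a sign error. You assert that $\Psi(x,y;\sigma-1)=\bigl(1+x^{-q}e^{-1/|y|^p}\bigr)^{\sigma-1}$ is \emph{decreasing} in $X=a\sigma+1$ ``since the base exceeds $1$ and the exponent $\sigma-1$ increases with $X$.'' That conclusion is backwards: with base $>1$, an increasing exponent makes the power \emph{increase}. So the factor-by-factor argument as written does not establish that the integrand is monotone decreasing in $X$, because one factor moves the wrong way. The correct way to see the monotonicity is to recombine: with $c=x^{-q}(e(y))^q$ the integrand of $W_{\b}^{(2)}$ at real $\sigma$ equals
\begin{equation}
x^{a\sigma-q}y^{b\sigma-p+\b}(e(y))^q(1+c)^{\sigma-1}
=\bigl[x^ay^b+x^{a-q}y^b(e(y))^q\bigr]^{\sigma}\cdot\frac{c}{1+c}\cdot y^{-p+\b},
\end{equation}
so up to a $\sigma$-independent factor it is $|f(x,y)|^{\sigma}$, and monotonicity in $X$ follows only from $|f(x,y)|<1$ on the integration region, which in turn requires $r_1,r_2$ small (e.g. $r_1^ar_2^b(1+\lambda^q)<1$), not merely $r_1,r_2<1$. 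The same rewriting handles $W_{\b}^{(4)}$. (To be fair, the paper also invokes Lemma~\ref{lem:4.5} without spelling out the monotonicity check, and the smallness of $\tilde V$ in Section~7 makes the needed bound on $|f|$ harmless; but your attempt to make the check explicit should be corrected rather than left as is.)
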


\begin{proof}
Since $\Psi(x,y;\sigma)\leq 1$ for $\sigma <0$ and $(e(y))^q\leq \lambda^{q} x^{q}y^{q\delta}$ for $(x,y)\in U_1(\lambda)$,
the integrals $W_{\b}^{(j)}(\sigma;\lambda)$
can be estimated by using the following integrals for $j=2,3,4$:
\begin{equation}\label{eqn:6.27}
\begin{split}
\tilde{W}_{\b}^{(2)}(\sigma;\lambda)&=\int_{0}^{\frac{1}{\lambda}E(\tau(\lambda r_1))}
\left(\int_0^{E^{-1}(\lambda x)}x^{a\sigma}y^{b\sigma-p+\b+q\delta}dy\right)dx,\\
\tilde{W}_{\b}^{(3)}(\sigma;\lambda)&=\int_{\frac{1}{\lambda}E(\tau(\lambda r_1))}^{r_1}x^{a\sigma}dx,\\
\tilde{W}_{\b}^{(4)}(\sigma;\lambda)&=\int_{\frac{1}{\lambda}E(\tau(\lambda r_1))}^{r_1}
\left(\int_0^{r_2}x^{a\sigma}y^{b\sigma-p+\b+q\delta}dy\right)dx.
\end{split}
\end{equation}
Indeed, we have
\begin{equation}\label{eqn:6.34}
\begin{split}
W_{\b}^{(2)}(\sigma;\lambda)
&\leq \lambda^{q}\tilde{W}_{\b}^{(2)}(\sigma;\lambda),\\
W_{\b}^{(3)}(\sigma;\lambda)
&\leq \tilde{W}_{\b}^{(3)}(\sigma;\lambda),\\
W_{\b}^{(4)}(\sigma;\lambda)
&\leq \lambda^{q}\tilde{W}_{\b}^{(4)}(\sigma;\lambda).
\end{split}
\end{equation}
Here, we easily have
\begin{equation}
\begin{split}
\tilde{W}_{\b}^{(2)}(\sigma;\lambda)
&=\int_0^{\tau(\lambda r_1)}\left(\int_{\frac{1}{\lambda}E(y)}^{\frac{1}{\lambda}E(\tau(\lambda r_1))}
x^{a\sigma}y^{b\sigma-p+\b+q\delta}dx\right)dy.\\
\end{split}
\end{equation}
Since the inequality:
$-b/a-2p+q\delta>-1$ holds from \eqref{eqn:6.7},
Lebesgue's convergence theorem gives 
the existences of the limits of the integrals $\tilde{W}_{\b}^{(2)}(\sigma;\lambda)$ and $\tilde{W}_{\b}^{(4)}(\sigma;\lambda)$ as $\sigma \to -1/a+0$.
Indeed,
\begin{equation}\label{eqn:6.36}
\begin{split}
\lim_{\sigma\to-1/a+0}\tilde{W}_{\b}^{(2)}(\sigma;\lambda)
&=\int_0^{\tau(\lambda r_1)}\left(\int_{\frac{1}{\lambda}E(y)}^{\frac{1}{\lambda}E(\tau(\lambda r_1))}
x^{-1}y^{-b/a-p+\b+q\delta}dx\right)dy\\
&=\int_0^{\tau(\lambda r_1)}
y^{-b/a-p+\b+q\delta}\left(\log\left(\frac{1}{\lambda}E(\tau(\lambda r_1))\right)-\log\left(\frac{1}{\lambda}E(y)\right)\right)dy\\
&=\int_0^{\tau(\lambda r_1)}
y^{-b/a-p+\b+q\delta}\left(\log\left(E(\tau(\lambda r_1))\right)+\delta\log y +\frac{y^{-p}}{q}\right)dy\\
\end{split}
\end{equation}
and
\begin{equation}\label{eqn:6.37}
\begin{split}
\lim_{\sigma\to -1/a+0}\tilde{W}_{\b}^{(4)}(\sigma;\lambda)
&=\int_{\frac{1}{\lambda}E(\tau(\lambda r_1))}^{r_1}
\left(\int_0^{r_2}x^{-1}y^{-b/a-p+\b+q\delta}dy\right)dx.\\
\end{split}
\end{equation}
We note that the integrals in \eqref{eqn:6.36} converge if $-b/a-2p+q\delta>-1$,
and the integral in \eqref{eqn:6.37} converges if $-b/a-p+q\delta>-1$.
(See the condition \eqref{eqn:6.7}.)
The existence of the limit of $\tilde{W}_{\b}^{(3)}(\sigma;\lambda)$ as $\sigma\to -1/a+0$
is trivial.
Indeed,
\begin{equation}\label{eqn:6.38}
\begin{split}
\lim_{\sigma\to -1/a+0}\tilde{W}_{\b}^{(3)}(\sigma;\lambda)
=\log r_1-\log \left(\frac{1}{\lambda}E(\tau(\lambda r_1))\right).
\end{split}
\end{equation}
From \eqref{eqn:6.34}, the above convergences imply
$\limsup_{\sigma\to -1/a+0}W_{\b}^{(j)}(\sigma;\lambda)<\infty$ for $j=2,3,4$.
Applying Lemma \ref{lem:4.5} to the integrals $W_{\b}^{(j)}(\sigma;\lambda)$,
we prove the lemma.
\end{proof}

\begin{proof}[Proof of Lemma \ref{lem:6.2}]
From \eqref{eqn:6.12}, Lemmas \ref{lem:6.3} and \ref{lem:6.4},
we obtain the lemma.
\end{proof}

\subsection{Asymptotics of $Z_{\b}^{(2)}(\sigma;\lambda)$}

\begin{lemma}\label{lem:6.5}
We have the following.
\begin{enumerate}
\item
If $
0 \leq \b < b/a-1$,
then
\begin{equation}
\begin{split}
(1+\lambda^q)^{-1/a}\delta A_{\b}
&\leq\liminf_{\sigma\to -1/a+0}(a\sigma+1)^{\frac{b/a-\b-1}{p}}|\log (a\sigma+1)|^{-1}\cdot Z_{\b}^{(2)}(\sigma;\lambda)\\
&\leq
\limsup_{\sigma\to -1/a+0}(a\sigma+1)^{\frac{b/a-\b-1}{p}}|\log (a\sigma+1)|^{-1}\cdot Z_{\b}^{(2)}(\sigma;\lambda)
\leq
\delta A_{\b},
\end{split}
\end{equation}
where
\begin{equation}\label{eqn:6.38.1}
A_{\b}=\frac{q^{(b/a-\b-1)/p}}{p^2}\Gamma\left(\frac{b/a-\b-1}{p}\right).
\end{equation}
\item
If $\b=b/a-1$, then
\begin{equation}
\begin{split}
\frac{(1+\lambda^q)^{-1/a}\delta}{2p^2}
&\leq\liminf_{\sigma\to -1/a+0}|\log (a\sigma+1)|^{-2}\cdot Z_{\b}^{(2)}(\sigma;\lambda)\\
&\leq
\limsup_{\sigma\to -1/a+0}|\log (a\sigma+1)|^{-2}\cdot Z_{\b}^{(2)}(\sigma;\lambda)
\leq
\frac{\delta}{2p^2}.
\end{split}
\end{equation}
\item
If $\b>b/a-1$,
then there exists a constant $B_{\b}^{(2)}(\lambda)$ which 
is independent of $\sigma$, such that
$\lim_{\sigma\to -1/a+0}Z_{\b}^{(2)}(\sigma;\lambda)=B_{\b}^{(2)}(\lambda)$.
\end{enumerate}
\end{lemma}

\begin{proof}
(i)
The integral $Z_{\b}^{(2)}(\sigma;\lambda)$
is represented as
\begin{equation}
Z_{\b}^{(2)}(\sigma;\lambda)
=\int_{U_2(\lambda)}x^{a\sigma}y^{b\sigma+\b}\Psi(x,y;\sigma)dxdy,
\end{equation}
where $\Psi$ is as in \eqref{eqn:5.12}.
From \eqref{eqn:6.18}, we have
\begin{equation}\label{eqn:6.40}
(1+\lambda^q)^{\sigma}\tilde{Z}_{\b}^{(2)}(\sigma;\lambda)
\leq Z_{\b}^{(2)}(\sigma;\lambda)
\leq \tilde{Z}_{\b}^{(2)}(\sigma;\lambda)
\end{equation}
with
\begin{equation}
\tilde{Z}_{\b}^{(2)}(\sigma;\lambda)=\int_{U_2(\lambda)}x^{a\sigma}y^{b\sigma+\b}dxdy.
\end{equation}
By decomposing the integral region $U_2(\lambda)$ into the following two sets:
\begin{equation}\label{eqn:6.41}
\{(x,y)\in U_2(\lambda):y\leq \tau(\lambda r_1)\},\quad
\{(x,y)\in U_2(\lambda):y> \tau(\lambda r_1)\},
\end{equation}
we have
\begin{equation}\label{eqn:6.42}
\tilde{Z}_{\b}^{(2)}(\sigma;\lambda)=
G^{\b}_1(\sigma)+G^{\b}_2(\sigma)
\end{equation}
with
\begin{equation}\label{eqn:6.43}
\begin{split}
G^{\b}_1(\sigma)&=\int_{0}^{\tau(\lambda r_1)}\left(\int_{\frac{1}{\lambda}e(y)}^{\frac{1}{\lambda}E(y)} x^{a\sigma}y^{b\sigma+\b}dx\right)dy,\\
G^{\b}_2(\sigma)&=\int_{\tau(\lambda r_1)}^{\rho(\lambda r_1)}\left(\int_{\frac{1}{\lambda}e(y)}^{r_1} x^{a\sigma}y^{b\sigma+\b}dx\right)dy,\\
\end{split}
\end{equation}
where $\tau$ and $\rho$ are as in \eqref{eqn:5.7}.
We remark that if $\lambda r_1 \geq E(r_2)$, then the second set in \eqref{eqn:6.41} is empty.

The integral $G^{\b}_1(\sigma)$ can be computed as
\begin{equation}\label{eqn:6.44}
\begin{split}
G^{\b}_1(\sigma)&=
\frac{1}{\lambda^{X}X}\int_0^{\tau(\lambda r_1)}
y^{-b/a+\b+(b/a-\delta)X}(e(y))^{X}dy\\
&\quad-\frac{1}{\lambda^{X}X}\int_0^{\tau(\lambda r_1)}y^{-b/a+\b+bX/a}(e(y))^{X}dy,
\end{split}
\end{equation}
where $X$ is as in \eqref{eqn:6.20.1}.
Applying Lemma \ref{lem:4.1} to the two integrals in \eqref{eqn:6.44}
and noticing Remark \ref{rem:4.2},
we have that
if $0\leq \b < b/a-1$, then
\begin{equation}\label{eqn:6.45}
\begin{split}
&\lim_{X\to +0}X^{\frac{b/a-\b-1}{p}}(\log X)^{-1}\cdot G^{\b}_1(\sigma)
=-\frac{\delta q^{(b/a-\b-1)/p}}{p^2}\Gamma\left(\frac{b/a-\b-1}{p}\right).
\end{split}
\end{equation}
On the other hand,
Lebesgue's convergence theorem gives that for any $\b\in \N$ 
\begin{equation}\label{eqn:6.47}
\begin{split}
\lim_{\sigma\to-1/a +0}G^{\b}_2(\sigma)
&=\int_{\tau(\lambda r_1)}^{\rho(\lambda r_1)}\left(\int_{\frac{1}{\lambda}e(y)}^{r_1} x^{-1}y^{-b/a+\b}dx\right)dy\\
&=\int_{\tau(\lambda r_1)}^{\rho(\lambda r_1)}
y^{-b/a+\b}\left(\log r_1+\log \lambda
+\frac{y^{-p}}{q}\right)dy.\\
\end{split}
\end{equation}
Notice that the last integral converges.
Combining \eqref{eqn:6.40}, \eqref{eqn:6.42}, \eqref{eqn:6.45} and \eqref{eqn:6.47} gives the assertion in the case (i).

(ii)
Applying Lemma \ref{lem:4.1} to the two integrals in \eqref{eqn:6.44},
we have that 
if $\b=b/a-1$, then
\begin{equation}
\lim_{X\to +0}|\log X|^{-2}\cdot G_1^{\b}(\sigma)
=\frac{\delta}{2p^2}.
\end{equation}
In a similar fashion to the proof of the case (i), we obtain the desired inequalities.

(iii)
From \eqref{eqn:6.43} and Lebesgue's convergence theorem,
we have
\begin{equation}\label{eqn:6.50}
\begin{split}
\lim_{X\to +0}G_1^{\b}(\sigma)
&=\int_{0}^{\tau(\lambda r_1)}\left(\int_{\frac{1}{\lambda}e(y)}^{\frac{1}{\lambda}E(y)} x^{-1}y^{-b/a+\b}dx\right)dy\\
&=-\delta\int_0^{\tau(\lambda r_1)}y^{-b/a+\b}(\log y) dy.\\
\end{split}
\end{equation}
We remark that the last improper integral converges
and can be computed by using integration by parts.
Combining \eqref{eqn:6.42}, \eqref{eqn:6.47} and  \eqref{eqn:6.50},
we show that
the limit $\lim_{\sigma\to -1/a+0}\tilde{Z}_{\b}^{(2)}(\sigma;\lambda)$
exists.
Then
the boundedness of $\limsup_{\sigma\to -1/a+0} Z_{\b}^{(2)}(\sigma;\lambda)$ is obtained from \eqref{eqn:6.40}.
Applying Lemma \ref{lem:4.5} to $Z_{\b}^{(2)}(\sigma;\lambda)$,
we show the existence of the limit of $Z_{\b}^{(2)}(\sigma;\lambda)$ as $\sigma \to -1/a+0$.
\end{proof}

\subsection{Asymptotics of $Z_{\b}^{(3)}(\sigma;\lambda)$}

\begin{lemma}\label{lem:6.6}
We have the following.
\begin{enumerate}
\item
If $0\leq \b < b/a-1$,
then
\begin{equation}
\begin{split}
\frac{\tilde{A}_{\b}}{(\lambda^{q}+1)^{1/a}}
&\leq \liminf_{\sigma\to -1/a+0}(a\sigma+1)^{\frac{b/a-\b-1}{p}}\cdot Z_{\b}^{(3)}(s;\lambda)\\
&\leq \limsup_{\sigma\to -1/a+0}(a\sigma+1)^{\frac{b/a-\b-1}{p}}\cdot Z_{\b}^{(3)}(s;\lambda)
\leq
\frac{\tilde{A}_{\b}}{\lambda^{q/a}},
\end{split}
\end{equation}
where 
\begin{equation}
\tilde{A}_{\b}=\frac{aq^{(b/a-\b-1)/p-1}}{p}\Gamma\left(\frac{b/a-\b-1}{p}\right).
\end{equation}
\item
If $\b=b/a-1$, then
\begin{equation}
\begin{split}
\frac{a}{(\lambda^{q}+1)^{1/a}pq}
&\leq\liminf_{\sigma\to -1/a+0}|\log (a\sigma+1)|^{-1}\cdot Z_{\b}^{(3)}(s;\lambda)\\
&\leq
\limsup_{\sigma\to -1/a+0}|\log (a\sigma+1)|^{-1}\cdot Z_{\b}^{(3)}(s;\lambda)
\leq
\frac{a}{\lambda^{q/a}pq}.
\end{split}
\end{equation}
\item
If $\b>b/a-1$,
then there exists a constant $B_{\b}^{(3)}(\lambda)$ which 
is independent of $\sigma$, such that
$\lim_{\sigma\to -1/a+0}Z_{\b}^{(3)}(\sigma;\lambda)=B_{\b}^{(3)}(\lambda)$.
\end{enumerate}
\end{lemma}

\begin{proof}
(i)
Since $\lambda x < e(y)$ on $U_3(\lambda)$,
we see that
\begin{equation}\label{eqn:6.56}
(1+\lambda^{-q})^{\sigma}\tilde{Z}_{\b}^{(3)}(\sigma;\lambda)
\leq Z_{\b}^{(3)}(\sigma;\lambda)\leq
\tilde{Z}_{\b}^{(3)}(\sigma;\lambda)
\end{equation}
with
\begin{equation}
\begin{split}
\tilde{Z}_{\b}^{(3)}(\sigma;\lambda)&=\int_{U_3(\lambda)}x^{(a-q)\sigma}y^{b\sigma+\b}(e(y))^{q\sigma}dxdy.
\end{split}
\end{equation}
By decomposing the integral region $U_3(\lambda)$ into the following two sets:
\begin{equation}\label{eqn:6.58}
\{(x,y)\in U_3(\lambda):y\leq \rho(\lambda r_1)\},\quad
\{(x,y)\in U_3(\lambda):y> \rho(\lambda r_1)\},
\end{equation}
we have
\begin{equation}\label{eqn:6.59}
\tilde{Z}_{\b}^{(3)}(\sigma;\lambda)=J_1(\sigma)+J_2(\sigma)
\end{equation}
with
\begin{equation}\label{eqn:6.60}
\begin{split}
J_1(\sigma)&=\int_{0}^{\rho(\lambda r_1)}\left(\int_{0}^{\frac{1}{\lambda}e(y)}x^{(a-q)\sigma}y^{b\sigma+\b}(e(y))^{q\sigma} dx\right)dy,\\
J_2(\sigma)&=\int_{\rho(\lambda r_1)}^{r_2}\left(\int_{0}^{r_1}x^{(a-q)\sigma}y^{b\sigma+\b}(e(y))^{q\sigma} dx\right)dy,\\
\end{split}
\end{equation}
where $\rho$ is as in \eqref{eqn:5.7}.
We remark that if $\lambda r_1 \geq e(r_2)$, then the second set in \eqref{eqn:6.58} is empty.
The integral $J_1(\sigma)$ can be computed as
\begin{equation}\label{eqn:6.61}
J_1(\sigma)=\frac{1}{\lambda^{X-qX/a+q/a}(X-qX/a+q/a)}
\int_{0}^{\rho(\lambda r_1)}y^{-b/a+\b+bX/a}(e(y))^{X} dy,
\end{equation}
where $X$ is as in \eqref{eqn:6.20.1}.
Lemma \ref{lem:4.1} and Remark \ref{rem:4.2} give that
if $0\leq \b<b/a-1$, then
\begin{equation}\label{eqn:6.62}
\lim_{X\to +0}X^{\frac{b/a-\b-1}{p}}\cdot J_1(\sigma)=
\frac{aq^{(b/a-\b-1)/p-1}}{\lambda^{q/a}p}\Gamma\left(\frac{b/a-\b-1}{p}\right).
\end{equation}
On the other hand,
Lebesgue's convergence theorem gives that
for any $\b\in \N$,
\begin{equation}\label{eqn:6.63}
\begin{split}
\lim_{\sigma\to -1/a+0}J_2(\sigma)
&=\int_{\rho(\lambda r_1)}^{r_2}\left(\int_{0}^{r_1}x^{q/a-1}y^{-b/a+\b}(e(y))^{-q/a} dx\right)dy\\
&=\frac{a r_1^{q/a}}{q}\int_{\rho(\lambda r_1)}^{r_2}y^{-b/a+\b}(e(y))^{-q/a}dy.
\end{split}
\end{equation}
We remark that the last integral converges.
Combining \eqref{eqn:6.56}, \eqref{eqn:6.59}, \eqref{eqn:6.62} and \eqref{eqn:6.63} gives the assertion.

(ii)
Applying Lemma \ref{lem:4.1} to \eqref{eqn:6.61},
we have that
if $\b=b/a-1$, then
\begin{equation}
\lim_{X\to +0}|\log X|^{-1}\cdot J_1(\sigma)=\frac{a}{\lambda^{q/a}pq}.
\end{equation}
We similarly show the desired inequalities to the case (i).

(iii)
Analogous to the proof of Lemma \ref{lem:6.5} (iii),
it suffices to show the existence of the limit of the integral $\tilde{Z}_{\b}^{(3)}(\sigma;\lambda)$
as $\sigma \to -1/a+0$.
From applying Lemma \ref{lem:4.1} to \eqref{eqn:6.61},
it is shown that
if $\b>b/a-1$, then the limit $\lim_{\sigma\to-1/a+0}J_1(\sigma)$ exists. 
Hence
the limit $\lim_{\sigma\to -1/a+0}\tilde{Z}_{\b}^{(3)}(\sigma;\lambda)$
also exists
from \eqref{eqn:6.59} and \eqref{eqn:6.63}.
\end{proof}

\subsection{Proof of Theorem \ref{thm:6.1}}
Let us consider the case where $0\leq \b<b/a-1$.
From \eqref{eqn:6.8}, Lemmas \ref{lem:6.2}, \ref{lem:6.5} and \ref{lem:6.6},
\begin{equation}\label{eqn:6.70}
\begin{split}
\frac{C_{\b}}{(-b/a+\b+1)}
&\leq
\liminf_{\sigma\to -1/a+0}(a\sigma+1)^{1+\frac{b/a-\b-1}{p}}\cdot Z_{\b}^{(1)}(\sigma;\lambda)\\
&=
\liminf_{\sigma\to-1/a+0}(a\sigma +1)^{1+\frac{b/a-\b-1}{p}}\cdot Z_{\b}(\sigma)\\
&\leq 
\limsup_{\sigma\to-1/a+0}(a\sigma +1)^{1+\frac{b/a-\b-1}{p}}\cdot Z_{\b}(\sigma)\\
&=
\limsup_{\sigma\to -1/a+0}(a\sigma+1)^{1+\frac{b/a-\b-1}{p}}\cdot Z_{\b}^{(1)}(\sigma;\lambda)\\
&\leq
\frac{(1+\lambda^q)^{-1/a}C_{\b}}{(-b/a+\b+1)},
\end{split}
\end{equation} 
where
$C_{\b}$ is as in \eqref{eqn:6.3}.
Note that $Z_{\b}(\sigma)$ is independent of $\lambda$.
Considering the limit as $\lambda\to 0$ in \eqref{eqn:6.70},
we have
\begin{equation}\label{eqn:6.71}
\lim_{\sigma \to -1/a+0}(a\sigma+1)^{1+\frac{b/a-\b-1}{p}}\cdot Z_{\b}(\sigma)=\frac{C_{\b}}{(-b/a+\b+1)}.
\end{equation}
In the case where $b/a-1<\b<p+b/a-1$, we similarly obtain \eqref{eqn:6.71}, which completes the proof in the case (i).
Analogously, we can show the assertions (ii), (iii) in Theorem \ref{thm:6.1}.
Notice that
$C_{\b}=1$ if $\b=b/a-1$.

Assume $\b>p+b/a-1$.
From \eqref{eqn:6.8}, Lemmas \ref{lem:6.2}, \ref{lem:6.5} and \ref{lem:6.6},
the limit of $Z_{\b}(\sigma)$
as $\sigma \to -1/a+0$ exists.
Since $Z_{\b}(\sigma)$ is independent of $\lambda$,
so is the limit of it.
This completes the proof of the theorem.

\subsection{Asymptotic limits of slightly general integrals}

In the case where $\b>p+b/a-1$,
we consider the integrals 
\begin{equation}\label{eqn:6.72}
\begin{split}
\hat{Z}_{\b}(\phi)(s)
&=\int_{V}\left|x^a y^b +x^{a-q} y^b e^{-1/|y|^p}\right|^s y^{\b}\phi(x,y)dxdy \quad
\mbox{for $s\in \C$},
\end{split}
\end{equation}
where $a,b,p,q,\b,V$ are as in \eqref{eqn:6.1}
and $\phi$ is a smooth function defined on $V$.
Letting $\a=0$ in \eqref{eqn:5.4}
and noticing Remark \ref{rem:5.1} (i),
we see that
if $\b\geq b/a-1$, then
the integral $\hat{Z}_{\b}(\phi)(s)$
can be analytically continued as a holomorphic
function
to the half-plane ${\rm Re}(s)>-1/a$.
Let us investigate the behavior at $s=-1/a$ of the restriction of $\hat{Z}_{\b}(\phi)(s)$ to the real axis.

\begin{lemma}\label{lem:6.7}
If $\b>p+b/a-1$, 
then there exists a constant $\hat{B}_{\b}(\phi)$ which is independent of $\sigma$, such that
$\lim_{\sigma\to -1/a+0}\hat{Z}_{\b}(\phi)(\sigma)=\hat{B}_{\b}(\phi)$.
\end{lemma}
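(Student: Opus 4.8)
The plan is to deduce the lemma directly from the monotone convergence-type Lemma~\ref{lem:4.6}, feeding into it the case $\phi\equiv 1$ already settled in Theorem~\ref{thm:6.1}~(iv). Write $f(x,y)=x^ay^b+x^{a-q}y^be^{-1/|y|^p}$ and, as in \eqref{eqn:6.20.1}, put $X=a\sigma+1$, so that $\sigma=-1/a+X/a$ and $X\to+0$ exactly when $\sigma\to-1/a+0$. Let $U=(0,r_1)\times(0,r_2)$ be the interior of $V$ and define
\begin{equation}
F(x,y;X):=|f(x,y)|^{(X-1)/a}\,y^{\b}\qquad\text{on }U\times(0,\epsilon)
\end{equation}
for a small $\epsilon\in(0,1)$. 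Since $f>0$ on $U$, $F$ is a nonnegative real-valued function there, and for real $\sigma=(X-1)/a$ one has $\hat{Z}_{\b}(\phi)(\sigma)=\int_{V}F(x,y;X)\phi(x,y)\,dxdy=M(X;\phi)$ in the notation of Lemma~\ref{lem:4.6} (the boundary of $V$ being null), with $\psi=\phi$. So it suffices to check the hypotheses of that lemma.

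First I would arrange the monotonicity. Shrinking $r_2$ once more if necessary, I may assume $\sup_{V}|f|\le 1$: indeed $f(x,y)=y^b\bigl(x^a+x^{a-q}e^{-1/|y|^p}\bigr)\le r_2^b\bigl(r_1^a+r_1^{a-q}\bigr)\le 2r_2^b$ on $V$ because $r_1<1$ and $e^{-1/|y|^p}\le 1$, so $r_2\le 2^{-1/b}$ does the job and is compatible with \eqref{eqn:6.7.1}. Then for each fixed $(x,y)\in U$ the map $X\mapsto|f(x,y)|^{(X-1)/a}$ is monotone decreasing on $(0,\epsilon)$, because the base lies in $[0,1]$ while the exponent $(X-1)/a$ increases with $X$; hence $F(x,y;\cdot)$ is monotone decreasing. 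It is also clear that $F\not\equiv 0$.

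Next I would record integrability and the crucial $\limsup$ bound. Since $\b>p+b/a-1>b/a-1$, Remark~\ref{rem:5.1}~(ii) (with $\a=0$) shows that the integral defining $\hat{Z}_{0\b}(\phi)(s)$, and in particular $\int_{V}F(x,y;X)\,dxdy=Z_{\b}(\sigma)$, converges for every $\sigma>-1/a$; thus $F(\cdot;X)$ is integrable on $U$ for all $X\in(0,\epsilon)$. Moreover, again using $\b>p+b/a-1$, Theorem~\ref{thm:6.1}~(iv) gives $\lim_{\sigma\to-1/a+0}Z_{\b}(\sigma)=B_{\b}<\infty$, so $\limsup_{X\to+0}\int_{V}F(x,y;X)\,dxdy=B_{\b}<\infty$. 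All hypotheses of Lemma~\ref{lem:4.6} are now met, and it yields a constant $C(\phi)$, independent of $X$ and hence of $\sigma$, with $\lim_{X\to+0}M(X;\phi)=C(\phi)$; setting $\hat{B}_{\b}(\phi):=C(\phi)$ finishes the argument.

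I expect the only delicate point to be the monotonicity hypothesis of Lemma~\ref{lem:4.6}, which is what forces the preliminary normalization $\sup_V|f|\le 1$; once this harmless further smallness on $r_2$ is in place, no decomposition of $V$ with the auxiliary parameter $\lambda$ is needed at all. In effect, all the substantive analysis has already been carried out in the proof of Theorem~\ref{thm:6.1}~(iv), and that is exactly why the hypothesis $\b>p+b/a-1$ has to reappear in the present lemma.
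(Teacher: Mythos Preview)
Your proof is correct and follows essentially the same route as the paper: invoke Theorem~\ref{thm:6.1}~(iv) to bound $\limsup_{X\to+0}\int_V F\,dxdy$, then feed this into Lemma~\ref{lem:4.6}. The paper's proof is a terse two-line application of exactly these two results; you have simply spelled out the verification of the hypotheses of Lemma~\ref{lem:4.6}, in particular the monotonicity in $X$, which the paper leaves implicit (and which indeed requires the harmless further smallness $\sup_V|f|\le 1$ you arrange).
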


\begin{proof}
Let $X$ be as in \eqref{eqn:6.20.1} for simplicity.
Theorem \ref{thm:6.1} (iv) implies
the boundedness of 
$\limsup_{X\to +0}Z_{\b}(\sigma)$
for $\b\in \Z_{+}$ satisfying $\b>p+b/a-1$,
where $Z_{\b}(s)$ is as in \eqref{eqn:6.1}.
From Applying Lemma \ref{lem:4.6}
to the integral $\hat{Z}_{\b}(\phi)(\sigma)$,
the limit of 
$\hat{Z}_{\b}(\phi)(\sigma)$ as $X\to +0$ exists.
\end{proof}

\section{Proof of Theorem \ref{thm:3.1}}
Let $f$ be as in \eqref{eqn:3.1}, 
$\varphi$ as in \eqref{eqn:1.1}
and
\begin{equation}
\tilde{V}=\{(x,y)\in \R^2:-r_1\leq x \leq r_1, -r_2\leq y \leq r_2\},
\end{equation}
where $r_1,r_2 \in (0,1)$ and $r_2$ is sufficiently small so that
\begin{equation}\label{eqn:7.1}
r_2<\min\left\{
\left(\frac{p}{2b/a+p-1}\right)^{1/p},
\left(\frac{p}{b/a+2p-1}\right)^{1/p}\right\}.
\end{equation}
Note that
$r_2$ satisfies the conditions \eqref{eqn:5.2.0} for $\a=0,1$
and \eqref{eqn:6.7.1}
since $a\leq b$.
We decompose 
\begin{equation}\label{eqn:7.2}
Z_f(\varphi)(s)=I(\varphi)(s)+J(\varphi)(s)
\end{equation}
with
\begin{equation}
\begin{split}
I(\varphi)(s)&=\int_{\tilde{V}}|f(x,y)|^s\varphi(x,y)\chi(x,y)dxdy,\\
J(\varphi)(s)&=\int_{\R^2}|f(x,y)|^s\varphi(x,y)(1-\chi(x,y))dxdy,\\
\end{split}
\end{equation}
where $\chi:\R^2\to[0,1]$ is a cut-off function
satisfying that $\chi=1$ near the origin, 
the support of $\chi$ is contained in $\tilde{V}$,
and $\chi(|x|,|y|)=\chi(x,y)$.

First we consider the integral $I(\varphi)(s)$,
which is essentially important.

\begin{lemma}\label{lem:7.1}
The integral $I(\varphi)(s)$ can be analytically continued as a meromorphic function,
denoted again by $I(\varphi)(s)$, to the half-plane ${\rm Re}(s)>-1/a$,
and its poles are contained in the set $\{-j/b:j\in \N \mbox{ with } j<b/a\}$.
Moreover, the following hold. 
\begin{enumerate}
\item
If at least one of the following two conditions is satisfied:
\begin{enumerate} 
\item
$0<p<b/a-1$,
\item
$b/a$ is not an odd integer,
\end{enumerate}
then
\begin{equation}
\lim_{\sigma\to -1/a+0}(a\sigma+1)^{1+\frac{b/a-1}{p}}
\cdot I(\varphi)(\sigma)
=-4C\cdot \varphi(0,0),
\end{equation}
where $C$ is as in \eqref{eqn:3.3}.
\item
If $p=b/a-1$ and $b/a$ is an odd integer,
then
\begin{equation}
\lim_{\sigma\to -1/a+0}(a\sigma+1)^{2}
\cdot I(\varphi)(\sigma) 
=-\frac{4q}{p}\cdot \varphi(0,0)
+\frac{4a}{b p!}\cdot \frac{\d^{p}\varphi}{\d y^{p}}(0,0).
\end{equation}
\item
If $p>b/a-1$ and $b/a$ is an odd integer,
then
\begin{equation}
\lim_{\sigma\to -1/a+0}(a\sigma+1)^{2}
\cdot I(\varphi)(\sigma) 
=\frac{4a}{b m!}\cdot \frac{\d^{m}\varphi}{\d y^{m}}(0,0),
\end{equation}
where $m=b/a-1$.
\end{enumerate}
\end{lemma}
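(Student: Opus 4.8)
The plan is to localize the singularity at $s=-1/a$ to the four coordinate quadrants, reduce each piece to the associated integrals $Z_\beta$ of Section \ref{sec:6} and $Z_{\alpha\beta}$, $\hat Z_\beta$ of Section \ref{sec:5}, and then read off the meromorphic continuation from Theorem \ref{thm:5.2} and the asymptotics from Theorem \ref{thm:6.1}. The first step exploits a symmetry of $f$: since $q$ is even, $x^q=|x|^q\ge 0$, so $f(x,y)=x^{a-q}y^b\bigl(|x|^q+e^{-1/|y|^p}\bigr)$ and hence $|f(x,y)|=|x|^a|y|^b+|x|^{a-q}|y|^b e^{-1/|y|^p}$ depends only on $|x|,|y|$; moreover $\chi(\epsilon_1 x,\epsilon_2 y)=\chi(|x|,|y|)=\chi(x,y)$ for any $\epsilon_1,\epsilon_2\in\{1,-1\}$. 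Splitting $\tilde V$ into quadrants and changing variables $(x,y)\mapsto(\epsilon_1 x,\epsilon_2 y)$ then yields
\begin{equation}
I(\varphi)(s)=\sum_{\epsilon_1,\epsilon_2\in\{1,-1\}}\int_{V}\bigl|x^ay^b+x^{a-q}y^be^{-1/|y|^p}\bigr|^{s}\Phi_{\epsilon}(x,y)\,dxdy,
\end{equation}
where $V=[0,r_1]\times[0,r_2]$ and $\Phi_{\epsilon}(x,y)=\varphi(\epsilon_1 x,\epsilon_2 y)\chi(x,y)$ is smooth on $V$; here $r_1,r_2$ are taken small enough that $\chi$ is supported in $\tilde V$, equals $1$ near the origin, and the conditions \eqref{eqn:5.20} (for $\alpha\in\{0,1\}$) and \eqref{eqn:6.7.1} hold.

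Next, fix an integer $M>p+b/a-1$ and apply Taylor's formula in $y$, $\Phi_{\epsilon}(x,y)=\sum_{\beta=0}^{M}\frac{y^{\beta}}{\beta!}(\partial_y^{\beta}\Phi_{\epsilon})(x,0)+y^{M+1}R_{\epsilon}(x,y)$ with $R_{\epsilon}$ smooth on $V$; then write each coefficient $\frac1{\beta!}(\partial_y^{\beta}\Phi_{\epsilon})(x,0)=c_{\epsilon,\beta}+x\,\psi_{\epsilon,\beta}(x)$ with $c_{\epsilon,\beta}\in\R$ and $\psi_{\epsilon,\beta}$ smooth on $[0,r_1]$. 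Substituting (valid as an identity for ${\rm Re}(s)>0$) gives
\begin{equation}
I(\varphi)(s)=\sum_{\epsilon_1,\epsilon_2}\Bigl(\sum_{\beta=0}^{M}\bigl[c_{\epsilon,\beta}\,Z_{\beta}(s)+Z_{1,\beta}(\psi_{\epsilon,\beta})(s)\bigr]+\hat Z_{M+1}(R_{\epsilon})(s)\Bigr),
\end{equation}
where $Z_{\beta}$ is as in \eqref{eqn:6.1}, $Z_{1,\beta}(\psi)$ is the integral \eqref{eqn:5.1} with $\alpha=1$ (so $x^{\alpha}=x$), and $\hat Z_{M+1}(\phi)$ is as in \eqref{eqn:6.72}. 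By Theorem \ref{thm:5.2} each $Z_{1,\beta}(\psi_{\epsilon,\beta})$ is meromorphic on ${\rm Re}(s)>-2/a$ with at most a simple pole at $-(\beta+1)/b$; by Theorem \ref{thm:5.2} with $\alpha=0$ each $Z_{\beta}$ is meromorphic on ${\rm Re}(s)>-1/a$ with at most a simple pole at $-(\beta+1)/b$; and $\hat Z_{M+1}(R_{\epsilon})$ is holomorphic on ${\rm Re}(s)>-1/a$ since $M+1>b/a-1$ (Remark \ref{rem:5.1}(ii)). Uniqueness of analytic continuation therefore extends $I(\varphi)$ meromorphically to ${\rm Re}(s)>-1/a$, with poles contained in $\{-(\beta+1)/b:\beta\in\Z_{+},\ \beta+1<b/a\}=\{-j/b:j\in\N,\ j<b/a\}$.

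For the asymptotics as $\sigma\to-1/a+0$, observe first that $\chi\equiv1$ near the origin gives $c_{\epsilon,\beta}=\frac1{\beta!}\epsilon_2^{\beta}\partial_y^{\beta}\varphi(0,0)$, so $\sum_{\epsilon_1,\epsilon_2}c_{\epsilon,\beta}$ equals $\frac4{\beta!}\partial_y^{\beta}\varphi(0,0)$ for even $\beta$ and $0$ for odd $\beta$. The terms $Z_{1,\beta}(\psi_{\epsilon,\beta})(\sigma)$ are $O((a\sigma+1)^{-1})$ (holomorphic at $-1/a$, except when $\beta=b/a-1$, where the pole there is simple), and $\hat Z_{M+1}(R_{\epsilon})(\sigma)$ tends to a finite constant by Lemma \ref{lem:6.7}; since the normalizing exponent exceeds $1$ in every case (in case (i) one necessarily has $a<b$, so $1+(b/a-1)/p>1$; in cases (ii),(iii) it equals $2$), these contributions vanish in the limit, and it remains to apply Theorem \ref{thm:6.1} to $\sum_{\beta\text{ even}}\frac4{\beta!}\partial_y^{\beta}\varphi(0,0)\,Z_{\beta}(\sigma)$ with the appropriate normalization. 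In case (i) the term $\beta=0$ has the largest blow-up exponent $1+(b/a-1)/p$ and limit $\frac{-4pC}{q(b/a-1)}\varphi(0,0)$ (note $C_{0}=C$ in \eqref{eqn:6.3} and \eqref{eqn:3.3}), every other even $\beta$ being of strictly lower order: when $0<p<b/a-1$ this is immediate for all even $\beta$, including $\beta=b/a-1$ via Theorem \ref{thm:6.1}(ii); when $b/a$ is not an odd integer the critical index $\beta=b/a-1$ is either non-integral or odd, hence absent. In cases (ii) and (iii), $b/a$ is an odd integer, so $m:=b/a-1$ is even and $(a\sigma+1)^{2}Z_{m}(\sigma)\to ap\tilde C/(bq)$ by Theorem \ref{thm:6.1}(ii); when $p>b/a-1$ this is the only even $\beta$ of order $(a\sigma+1)^{-2}$, giving $\frac4{m!}\partial_y^{m}\varphi(0,0)\cdot\frac{ap\tilde C}{bq}$, whereas when $p=b/a-1$ the term $\beta=0$ also attains exponent $1+(b/a-1)/p=2$ and contributes the extra $-\frac{4C}{q}\varphi(0,0)$. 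Collecting constants yields the three displayed formulas. The step I expect to demand the most care is this last reconciliation: verifying that the $x$-Taylor-remainder integrals $Z_{1,\beta}(\psi_{\epsilon,\beta})$ really are of strictly lower order near $s=-1/a$ (which rests on the order-at-most-one pole bound in Theorem \ref{thm:5.2}) and that the constants produced by Theorem \ref{thm:6.1} match those in the statement, in particular the parity dichotomy that selects exactly which $\partial_y^{\beta}\varphi(0,0)$ survive.
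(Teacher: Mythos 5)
Your proof is correct and takes essentially the same route as the paper: an orthant decomposition (using evenness of $q$), a Taylor expansion in $y$ with the coefficient functions split as a constant plus $x$ times a smooth remainder, reduction to the integrals $Z_\beta$, $Z_{1\beta}(\psi)$, and $\hat Z_{M+1}(\phi)$, and then Theorem~\ref{thm:5.2}, Theorem~\ref{thm:6.1}, and Lemma~\ref{lem:6.7} combined with the parity cancellation of the odd-$\beta$ Taylor coefficients across quadrants. The only cosmetic difference is that the paper first collects the even-$\beta$ leading terms into \eqref{eqn:7.11} and isolates the remainder as $R(s)$ in a separate Lemma~\ref{lem:7.2}, whereas you keep all the terms together and argue directly about their relative orders, which amounts to the same thing.
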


\begin{proof}
Since $q$ is even,
$|f(\theta_1 x,\theta_2 y)|=|f(x,y)|$ for any $(x,y)\in \R^2$ and $\theta_1,\theta_2\in\{1,-1\}$.
Using the orthant decomposition, we have
\begin{equation}\label{eqn:7.7}
I(\varphi)(s)=\sum_{\theta \in\{1,-1\}^2}
I_{+}(\varphi_{\theta})(s)
\end{equation}
with
\begin{equation}\label{eqn:7.8}
I_{+}(\varphi)(s)=\int_{V}|f(x,y)|^s\varphi(x,y)\chi(x,y)dxdy,
\end{equation}
where
$\varphi_{\theta}(x,y)=\varphi(\theta_1 x,\theta_2 y)$
with $\theta=(\theta_1,\theta_2)$ and
$V=\{(x,y)\in\R^2: 0\leq x \leq r_1, 0\leq y \leq r_2\}$.
Let $M=\max\{\b\in\Z :0\leq \b\leq p+b/a-1\}$.
Using Taylor's formula,
we have
\begin{equation}\label{eqn:7.9}
\varphi_{\theta}(x,y)\chi(x,y)=\sum_{\b=0}^{M}\frac{\theta_2^{\b}}{\b !}\frac{\d^{\b}\varphi}{\d y^{\b}}(0,0)y^{\b}
+\sum_{\b=0}^{M}xy^{\b}\varphi_{\theta}^{(\b)}(x)
+y^{M+1}\tilde{\varphi}_{\theta}(x,y),
\end{equation}
where
\begin{equation}
\begin{split}
\varphi_{\theta}^{(\b)}(x)&=\frac{1}{\b!}\int_{0}^{1}\frac{\d^{\b+1}(\varphi_{\theta} \chi)}{\d x\d y^{\b}}(tx,0)dt,\\
\tilde{\varphi}_{\theta}(x,y)&=\frac{1}{M!}\int_0^1(1-t)^{M}\frac{\d^{M+1}(\varphi_{\theta} \chi)}{\d y^{M+1}}(x,ty)dt,
\end{split}
\end{equation}
with $(\varphi_{\theta} \chi)(x,y)=\varphi_{\theta}(x,y)\chi(x,y)$.
Here, $\varphi_{\theta}^{(\b)}$ and $\tilde{\varphi}_{\theta}$ are smooth functions on $[-r_1,r_1]$ and $\tilde{V}$ respectively.
Substituting \eqref{eqn:7.9} to \eqref{eqn:7.7},
we have
\begin{equation}\label{eqn:7.11}
I(\varphi)(s)=
\sum_{k=0}^{\tilde{M}}\frac{4}{(2k)!}\frac{\d^{2k}\varphi}{\d y^{2k}}(0,0)Z_{(2k)}(s)
+R(s)
\end{equation}
with $\tilde{M}=\max\{k\in \Z:2k\leq M\}$ and
\begin{equation}\label{eqn:7.12}
R(s)=\sum_{\theta\in\{1,-1\}^2}\left(\sum_{\b=0}^{M}Z_{1\b}(\varphi_{\theta}^{(\b)})(s)
+\hat{Z}_{M+1}(\tilde{\varphi}_{\theta})(s)\right),
\end{equation}
where 
$Z_{\b}(s)$, $Z_{\a\b}(\psi)(s)$, $\hat{Z}_{\b}(\phi)(s)$
are as in \eqref{eqn:6.1}, \eqref{eqn:5.1}, \eqref{eqn:6.72} respectively.

The integral $Z_{\b}(s)$ is precisely investigated in Theorem \ref{thm:6.1}.
On the other hand,
we have the following.

\begin{lemma}\label{lem:7.2}
Let $R(s)$ be as in \eqref{eqn:7.12}.
Then $R(s)$ can be analytically continued as
a meromorphic function, 
denoted again by $R(s)$, 
to the half-plane ${\rm Re}(s)>-1/a$,
and its poles are contained in the set 
$\{-j/b:j\in\N \mbox{ with } j<b/a\}$.
In particular, the following hold.
\begin{enumerate}
\item
If $b/a-1$ is not an integer,
then
the limit of $R(\sigma)$
as $\sigma\to -1/a+0$ exists.  
\item
If $b/a-1$ is an integer, then
the limit of $(a\sigma+1)\cdot R(\sigma)$
as $\sigma\to -1/a+0$ exists. 
\end{enumerate}
\end{lemma}
\begin{proof}[Proof of Lemma \ref{lem:7.2}]
From Theorem \ref{thm:5.2},
the integral $Z_{1\b}(\varphi_{\theta}^{(\b)})(s)$
can be analytically continued as a meromorphic function to the half-plane ${\rm Re}(s)>-2/a$, which has only one pole at $s=-(\b+1)/b$ of order at most one.
In particular, we have that if $\b\neq b/a-1$, then the limit of $Z_{1\b}(\varphi_{\theta}^{(\b)})(\sigma)$ as $\sigma\to -1/a+0$ exists, and if $\b= b/a-1$, then the limit of $(a\sigma+1)\cdot Z_{1\b}(\varphi_{\theta}^{(\b)})(\sigma)$ as $\sigma\to -1/a+0$ exists.
Remark \ref{rem:5.1} (i) and Lemma \ref{lem:6.7} give that
the integral $\hat{Z}_{M+1}(\tilde{\varphi}_{\theta})(s)$
becomes a holomorphic function
on the half-plane ${\rm Re}(s)>-1/a$
and the limit of $\hat{Z}_{M+1}(\tilde{\varphi}_{\theta})(\sigma)$
as $\sigma\to -1/a+0$ exists. 
Hence, we show the lemma from \eqref{eqn:7.12}.
\end{proof}

From \eqref{eqn:7.11}, Theorem \ref{thm:5.2} and Lemma \ref{lem:7.2},
the integral $I(\varphi)(s)$ can be analytically continued as a meromorphic function, denoted again by $I(\varphi)(s)$,
to the half-plane ${\rm Re}(s)>-1/a$,
and its poles are contained in the set $\{-j/b:j\in \N \mbox{ with } j<b/a\}$.

(i)
First,
we assume that $b/a$ is not an odd integer.
We remark that if $b/a$ is an even integer,
then the integral $Z_m(s)$ does not appear in \eqref{eqn:7.11},
where $m:=b/a-1$.
Hence, Theorem \ref{thm:6.1} gives that if $b/a$ is not an odd integer,
then the growth rate of the integrals $Z_{(2k)}(\sigma)$ in \eqref{eqn:7.11}
as $\sigma\to -1/a+0$ decreases with respect to $k$.
Therefore,
we have from \eqref{eqn:7.11}, Theorem \ref{thm:6.1} and Lemma \ref{lem:7.2}
that
\begin{equation}\label{eqn:7.13}
\begin{split}
\lim_{\sigma\to -1/a+0}(a\sigma+1)^{1+\frac{b/a-1}{p}}
\cdot I(\varphi)(\sigma)
&=\lim_{\sigma\to -1/a+0}4\varphi(0,0)\cdot (a\sigma+1)^{1+\frac{b/a-1}{p}} \cdot Z_0(\sigma)\\
&=-\frac{4q^{(b/a-1)/p}}{(b/a-1)}\Gamma\left(1+\frac{b/a-1}{p}\right)\cdot \varphi(0,0).
\end{split}
\end{equation}

Next, we assume that $0<p<b/a-1$ and $b/a$ is an odd integer.
We denote $m:=b/a-1$.
Since $m$ is an even integer,
the integral $Z_m(s)$ appears in \eqref{eqn:7.11}.
Here, Theorem \ref{thm:6.1} gives
\begin{equation}
\lim_{\sigma\to-1/a+0}(a\sigma+1)^2\cdot Z_{m}(\sigma)
=\frac{a}{b}.
\end{equation}
From \eqref{eqn:7.11}, Theorem \ref{thm:6.1}, Lemma \ref{lem:7.2} and comparing
the growth rate of $Z_0(\sigma)$ and $Z_m(\sigma)$ as $\sigma\to -1/a+0$,
we also have \eqref{eqn:7.13}.

(ii)
Let us consider the case where $p=b/a-1$ and $b/a$ is an odd integer.
In a similar fashion to the case where $0<p<b/a-1$ and $b/a$ is an odd integer,
we have
\begin{equation}
\begin{split}
&\lim_{\sigma\to -1/a+0}(a\sigma+1)^{2}
\cdot I(\varphi)(\sigma)\\
=&\lim_{\sigma\to -1/a+0}\left(4\varphi(0,0)\cdot(a\sigma+1)^{2}
\cdot Z_0(\sigma)+\frac{4}{p!}\frac{\d^{p}\varphi}{\d y^{p}}(0,0)\cdot(a\sigma+1)^{2}
\cdot Z_p(\sigma) \right)\\
=&-\frac{4q}{p}\cdot \varphi(0,0)
+\frac{4a}{b p!}\cdot \frac{\d^{p}\varphi}{\d y^{p}}(0,0).
\end{split}
\end{equation}

(iii)
Assume that $p>b/a-1$ and $b/a$ is an odd integer.
We obtain the following equations analogously to the above cases
where $b/a$ is an odd integer.
\begin{equation}
\begin{split}
\lim_{\sigma\to -1/a+0}(a\sigma+1)^{2}
\cdot I(\varphi)(\sigma)
&=
\lim_{\sigma\to -1/a+0}\frac{4}{m!}\frac{\d^{m}\varphi}{\d y^{m}}(0,0)\cdot(a\sigma+1)^{2}
\cdot Z_m(\sigma)\\
&=\frac{4a}{b m!}\cdot \frac{\d^{m}\varphi}{\d y^{m}}(0,0),
\end{split}
\end{equation}
where $m=b/a-1$.
\end{proof}

Next, we consider the integral $J(\varphi)(s)$. 

\begin{lemma}\label{lem:7.3}
The integral $J(\varphi)(s)$ can be analytically continued as a meromorphic function, denoted again by $J(\varphi)(s)$, to the whole complex plane and its poles are contained in
the set $\{-j/(a-q),-k/b:j,k\in \N\}$.
In particular, the following hold.
\begin{enumerate}
\item
If $b/a$ is not an integer,
then 
the limit of
$J(\varphi)(\sigma)$
as $\sigma\to -1/a+0$
exists.
\item
If $b/a$ is an integer,
then
the limit of
$(a\sigma+1)\cdot J(\varphi)(\sigma)$
as $\sigma\to -1/a+0$
exists.
\end{enumerate}
\end{lemma}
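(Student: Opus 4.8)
The plan is to exploit the factorization
$f(x,y)=x^{a-q}y^{b}\bigl(x^{q}+e^{-1/|y|^{p}}\bigr)$,
valid on all of $\R^{2}$ since $a\ge q$, together with the observation that on the support of $\varphi\,(1-\chi)$ — a compact set lying outside a fixed neighbourhood of the origin — the last factor $g(x,y):=x^{q}+e^{-1/|y|^{p}}$ is smooth and bounded below by a positive constant. Indeed, there is a $\rho_{0}>0$ with $\chi\equiv 1$ on $\{|x|<\rho_{0},\ |y|<\rho_{0}\}$, so on the support of $1-\chi$ one has $|x|\ge\rho_{0}$ or $|y|\ge\rho_{0}$, whence $g\ge\rho_{0}^{q}$ or $g\ge e^{-1/\rho_{0}^{p}}$ respectively. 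Therefore $g(x,y)^{s}=e^{s\log g(x,y)}$ is, on this support, a smooth function of $(x,y)$ which is entire in $s$, and since $g>0$ there we may write $|f(x,y)|^{s}=|x|^{(a-q)s}\,|y|^{bs}\,g(x,y)^{s}$. Absorbing $g^{s}\,\varphi\,(1-\chi)$ into a single family $\Phi(\cdot,\cdot,s)$ of smooth functions, compactly supported away from the origin and depending holomorphically on $s$, reduces the problem to the meromorphic continuation of the monomial-type integral
$J(\varphi)(s)=\int_{\R^{2}}|x|^{(a-q)s}\,|y|^{bs}\,\Phi(x,y,s)\,dxdy$.

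For the continuation I would choose a partition of unity $\psi_{1}+\psi_{2}\equiv 1$ on the support of $1-\chi$ with $\psi_{1}$ supported in $\{|x|>\rho_{0}/2\}$ and $\psi_{2}$ supported in $\{|y|>\rho_{0}/2\}$ (possible since these open sets cover that support), and split $J(\varphi)=J_{1}+J_{2}$ accordingly. On $J_{1}$ the factor $|x|^{(a-q)s}$ has bounded-below modulus, hence is entire in $s$; Taylor-expanding the remaining smooth amplitude in $y$ about $y=0$ to order $N$ and integrating the resulting monomials $\int|y|^{bs}y^{j}\,dy$ explicitly produces a meromorphic extension to $\{{\rm Re}(s)>-(N+1)/b\}$ with only simple poles, located among $\{-k/b:k\in\N\}$ (the expansion coefficients, being $x$-integrals of $|x|^{(a-q)s}$ times smooth $s$-entire functions, are themselves entire); letting $N\to\infty$ gives a meromorphic function on $\C$. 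Symmetrically, $J_{2}$ extends to $\C$ with simple poles among $\{-j/(a-q):j\in\N\}$ — and is entire when $a=q$. Adding the two pieces yields the asserted continuation of $J(\varphi)(s)$ and the claimed containment of its pole set.

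It remains to read off the behaviour at $s=-1/a$. Since $2\le q\le a$, the equation $-1/a=-j/(a-q)$ would force $j=1-q/a\in[0,1)$, which is impossible for $j\in\N$; thus $J_{2}$ is holomorphic near $-1/a$. On the other hand $-1/a=-k/b$ for some $k\in\N$ precisely when $b/a\in\Z$ (note $b/a\ge 1$). Hence if $b/a$ is not an integer, $J(\varphi)$ is holomorphic at $-1/a$ and $\lim_{\sigma\to-1/a+0}J(\varphi)(\sigma)=J(\varphi)(-1/a)$ exists; if $b/a$ is an integer, $J(\varphi)$ has at most a simple pole at $-1/a$, so $(a\sigma+1)\,J(\varphi)(\sigma)$ tends to a finite limit as $\sigma\to-1/a+0$. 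The only genuinely delicate point is the very first one: verifying that $g$ stays bounded away from $0$ on the support of $\varphi\,(1-\chi)$, so that $g^{s}$ is an admissible smooth, holomorphically $s$-dependent amplitude. This is exactly where the flatness of $e^{-1/|y|^{p}}$ — which destroys meromorphy in Lemma~\ref{lem:7.1}, where the origin is present — becomes harmless once the origin has been excised by the cut-off $\chi$.
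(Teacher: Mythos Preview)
Your proof is correct and follows essentially the same route as the paper: factor $|f|^{s}=|x|^{(a-q)s}|y|^{bs}g(x,y)^{s}$ with $g=x^{q}+e^{-1/|y|^{p}}$ strictly positive on the support of $\varphi(1-\chi)$, then reduce to the standard meromorphic continuation of monomial integrals with a smooth, $s$-holomorphic amplitude. The only cosmetic difference is that the paper first performs an orthant decomposition and then cites Lemma~\ref{lem:7.4} (from \cite{kn20}) for the continuation, whereas you carry it out explicitly via a partition of unity and Taylor expansion; your reading of the pole set at $s=-1/a$ agrees with the paper's.
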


\begin{proof}
In a similar fashion to the case of $I(\varphi)(s)$,
the orthant decomposition gives
\begin{equation}\label{eqn:7.15}
J(\varphi)(s)=\sum_{\theta\in \{1,-1\}^2}J_{+}(\varphi_{\theta})(s),
\end{equation}
where
\begin{equation}
J_{+}(\varphi)(s)=\int_{\R_{+}^2}|f(x,y)|^s\varphi(x,y)(1-\chi(x,y))dxdy
\end{equation}
and $\varphi_{\theta}$ is as in \eqref{eqn:7.7}.
Here, let us investigate $J_{+}(\varphi)(s)$.
We have
\begin{equation}
\begin{split}
J_{+}(\varphi)(s)&=\int_{\R_+^2}x^{(a-q)s} y^{bs}\Phi(x,y;s)dxdy,
\end{split}
\end{equation}
where $\Phi(x,y;s)=|x^{q}+e^{-1/|y|^{p}}|^{s}\varphi(x,y)(1-\chi(x,y))$.
Since $x^{q}+e^{-1/|y|^{p}}$ does not vanish on $\R^2\setminus \{(0,0)\}$,
we see that 
$\Phi(\cdot;s)$ is a smooth function on $\R^2$ whose support is compact
for all $s\in\C$
and
$\frac{\d^{\a+\b}\Phi}{\d x^{\a}\d y^{\b}}(x,y;\cdot)$
is an entire function on $\C$ for all $(x,y)\in \R^2$, $\a,\b\in\Z_{\geq 0}$.

The following lemma is essentially known in \cite{gs64}, \cite{bg69}, \cite{agv88}, \cite{kn20}.

\begin{lemma}[Lemma 4.1 and Remark 4.2 of \cite{kn20}]\label{lem:7.4}
Let $A$, $B$ be integers with $A>0$, $B\geq 0$ 
and let $\eta(u;s)$ be a complex-valued function defined on $\R\times \C$.
We assume that
\begin{enumerate}
\item[(a)]
$\eta(\cdot;s)$ is a smooth function on $\R$ whose support is compact
for all $s\in\C$.
\item[(b)]
$\frac{\d^{\a}\eta}{\d u^{\a}}(u;\cdot)$
is an entire function on $\C$ for all $u\in \R$, $\a\in\Z_{+}$.
\end{enumerate} 
Let
\begin{equation}
H(s):=\int_{0}^{\infty}x^{As+B}\eta(u;s)du.
\end{equation}
Then the following hold.
\begin{enumerate}
\item
The integral $H(s)$ becomes a holomorphic function on the half-plane ${\rm Re}(s)>-(B+1)/A$.
\item
The integral $H(s)$ can be analytically continued as a meromorphic function to the whole complex plane.
Moreover, its poles are simple and they are contained in the set
$\{-(B+j)/A:j\in\N\}$.
\end{enumerate}
\end{lemma}

From Lemma \ref{lem:7.4},
the integral $J_{+}(\varphi)(s)$ can be analytically continued as a meromorphic function, denoted again by $J_{+}(\varphi)(s)$, to the whole complex plane and its poles are contained in
the set $\{-j/(a-q),-k/b:j,k\in \N\}$.
Moreover, its poles at $s=-k/b$ contained in the half-plane ${\rm Re}(s)>-1/(a-q)$ are of order at most one.
In particular, 
if $b/a$ is not an integer,
then 
the limit of
$J_{+}(\varphi)(\sigma)$
as $\sigma\to -1/a+0$
exists, and
if $b/a$ is an integer,
then
the limit of
$(a\sigma+1)\cdot J_{+}(\varphi)(\sigma)$
as $\sigma\to -1/a+0$
exists.
From \eqref{eqn:7.15},
the integral $J(\varphi)(s)$ has the same properties to that of $J_{+}(\varphi)(s)$.
\end{proof}

Finally,
Theorem \ref{thm:3.1} can be shown by applying Lemmas \ref{lem:7.1} and \ref{lem:7.3} to \eqref{eqn:7.2}.

\section{Proof of Theorem \ref{thm:3.3}}

Let us consider the integral
\begin{equation}\label{eqn:8.1}
Z(\sigma)=\int_{V}
\left|x^ay^b+x^a y^{b-q}e^{-1/|x|^p}+x^{a-\tilde{q}}y^{b}e^{-1/|y|^{\tilde{p}}}\right|^{\sigma}dxdy \quad \mbox{for $\sigma<0$},
\end{equation}
where $a,b,p,q,\tilde{p}, \tilde{q}$
are as in \eqref{eqn:3.7} and
$V=\{(x,y)\in\R^2:0\leq x\leq r_1,0\leq y\leq r_2\}$
with $r_1,r_2\in (0,1)$.
A similar estimate to \eqref{eqn:5.3} for $Z(\sigma)$
implies that $Z(\sigma)$ converges if $\sigma>-1/b$.

\subsection{Asymptotics of $Z(\sigma)$}

\begin{theorem}\label{thm:8.1}
We have the following.
\begin{enumerate}
\item
If $p>1-a/b$, 
then
\begin{equation}\label{eqn:8.2}
\lim_{\sigma\to-1/b+0}
(b\sigma+1)^{1-\frac{1-a/b}{p}}\cdot Z(\sigma)
=\hat{C},
\end{equation}
where $\hat{C}$ is as in \eqref{eqn:3.10}.
\item
If $p=1-a/b$, 
then 
\begin{equation}
\lim_{\sigma\to-1/b+0}
|\log (b\sigma+1)|^{-1}\cdot Z(\sigma)
=\frac{1}{pq}.
\end{equation}
\item
If $0<p< 1-a/b$, then the limit of 
$Z(\sigma)$ as 
$\sigma\to-1/b+0$ exists.
\end{enumerate}
\end{theorem}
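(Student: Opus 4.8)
The plan is to peel off the $e^{-1/|y|^{\tilde p}}$–term and reduce the whole problem to the model function of \eqref{eqn:2.2}, whose associated integral is governed by Lemma~\ref{lem:4.4}. On $V$ write $f=P_1+P_2+P_3$ with $P_1=x^ay^b$, $P_2=x^ay^{b-q}e^{-1/|x|^p}$, $P_3=x^{a-\tilde q}y^{b}e^{-1/|y|^{\tilde p}}$; since $2\le \tilde q\le a$ and $2\le q\le b$ all three are nonnegative on $V$, so $|f|^\sigma=f^\sigma$ there, and $f\ge P_1+P_2>0$ on the interior of $V$. Put
\[
Z_0(\sigma)=\int_V(P_1+P_2)^\sigma\,dxdy ,
\]
which converges for $-1/b<\sigma<0$ and is the integral of an \eqref{eqn:2.2}–type function over $V$. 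Since $t\mapsto t^\sigma$ is decreasing for $\sigma<0$, we have $Z(\sigma)\le Z_0(\sigma)$. I would first prove that $Z_0(\sigma)-Z(\sigma)$ stays bounded as $\sigma\to-1/b+0$, and then compute the asymptotics of $Z_0(\sigma)$; everything in the statement follows from these two facts.

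For the boundedness of $Z_0(\sigma)-Z(\sigma)$ I split $V$ by $W=\{x^{\tilde q}<e^{-1/|y|^{\tilde p}}\}=\{x<e^{-1/(\tilde q|y|^{\tilde p})}\}$. On $W$ I use only the crude bound $0\le(P_1+P_2)^\sigma-f^\sigma\le(P_1+P_2)^\sigma\le P_1^\sigma=x^{a\sigma}y^{b\sigma}$, and $\int_W x^{a\sigma}y^{b\sigma}\,dxdy=\frac{1}{a\sigma+1}\int_0^{r_2}y^{b\sigma}e^{-(a\sigma+1)/(\tilde q y^{\tilde p})}\,dy$ is bounded as $\sigma\to-1/b+0$ because $a\sigma+1\to 1-a/b>0$ and the exponential factor is flat at $y=0$. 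On $V\setminus W$ one has $P_3=x^{a-\tilde q}y^{b}e^{-1/|y|^{\tilde p}}\le x^{a-\tilde q}y^{b}\cdot x^{\tilde q}=P_1$, so by the mean value theorem
\[
0\le (P_1+P_2)^\sigma-f^\sigma\le|\sigma|(P_1+P_2)^{\sigma-1}P_3\le|\sigma|\,x^{a\sigma-\tilde q}y^{b\sigma}e^{-1/|y|^{\tilde p}} ;
\]
integrating $x$ over $[e^{-1/(\tilde q|y|^{\tilde p})},r_1]$ (the exponent $a\sigma-\tilde q+1$ is negative near $\sigma=-1/b$ since $\tilde q\ge 2$) produces the factor $e^{(\tilde q-a\sigma-1)/(\tilde q|y|^{\tilde p})}$, which combines with $e^{-1/|y|^{\tilde p}}$ into $\exp\big((-a\sigma-1)/(\tilde q|y|^{\tilde p})\big)$; this is again flat at $y=0$ because $-a\sigma-1\to a/b-1<0$, so this contribution is bounded too.

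For the asymptotics of $Z_0$, carry out the $y$–integral first and substitute $y=e_{p,q}(x)\,t$, where $e_{p,q}$ is as in \eqref{eqn:4.1} and $e^{-1/|x|^p}=e_{p,q}(x)^q$; this gives $\int_0^{r_2}y^{(b-q)\sigma}(y^q+e^{-1/|x|^p})^\sigma dy=e_{p,q}(x)^{b\sigma+1}\int_0^{T}t^{b\sigma}(1+t^{-q})^\sigma\,dt$ with $T=r_2/e_{p,q}(x)$. Splitting off $\int_0^{T}t^{b\sigma}\,dt=T^{b\sigma+1}/(b\sigma+1)$ and noting that the remaining piece $\int_0^{T}t^{b\sigma}\big((1+t^{-q})^\sigma-1\big)\,dt$ equals $-1/(b\sigma+1)+O(1)$ uniformly in $x$ and in $\sigma$ near $-1/b$, one obtains, after substituting back, integrating against $x^{a\sigma}$ over $[0,r_1]$ and writing $x^{a\sigma}=x^{-a/b+(a/b)(b\sigma+1)}$ (and using $e_{p,q}(x)^{b\sigma+1}\le 1$ to dispose of the bounded leftovers),
\[
Z_0(\sigma)=\frac{1}{b\sigma+1}\int_0^{r_1}x^{a\sigma}\big(1-e_{p,q}(x)^{b\sigma+1}\big)\,dx+O(1)=\frac{\tilde K(b\sigma+1)}{b\sigma+1}+O(1),
\]
where $\tilde K$ is the integral of Lemma~\ref{lem:4.4} with $A=-a/b$, $B=a/b$, $r=r_1$ and the given $p,q$. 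Since $A=-a/b>-1$, Lemma~\ref{lem:4.4} and the Remark after it yield: if $p>1-a/b$ (so $A<p-1$), then $\tilde K(X)\sim\hat C X^{(1-a/b)/p}$ with $\hat C$ as in \eqref{eqn:3.10}, hence $Z_0(\sigma)\sim\hat C(b\sigma+1)^{(1-a/b)/p-1}\to+\infty$; if $p=1-a/b$ (so $A=p-1$), then $Z_0(\sigma)\sim\frac{1}{pq}\,|\log(b\sigma+1)|\to+\infty$; and if $0<p<1-a/b$ (so $A>p-1$), then $\tilde K(X)/X$ has a finite limit, so $Z_0(\sigma)$ stays bounded.

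Finally I combine. In cases (i) and (ii), $Z_0(\sigma)\to+\infty$, so the boundedness of $Z_0(\sigma)-Z(\sigma)$ gives $Z(\sigma)\sim Z_0(\sigma)$, and multiplying by the normalising factor $(b\sigma+1)^{1-(1-a/b)/p}$ (resp. $|\log(b\sigma+1)|^{-1}$), which tends to $0$, absorbs the $O(1)$ and yields the two stated limits. In case (iii), $Z(\sigma)\le Z_0(\sigma)$ is bounded as $\sigma\to-1/b+0$; taking $V$ small enough that $f<1$ on it — which is harmless, since the part of the integral away from the origin clearly tends to a finite limit — the integrand $f^\sigma=f^{((b\sigma+1)-1)/b}$ is monotone decreasing in the parameter $X=b\sigma+1$, so Lemma~\ref{lem:4.5}, applied with $F(x,y;X)=f(x,y)^{(X-1)/b}$, shows that $\lim_{\sigma\to-1/b+0}Z(\sigma)$ exists. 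The step I expect to require the most care is the uniform boundedness of $Z_0(\sigma)-Z(\sigma)$: one must make sure the $P_3$–contribution is $O(1)$ uniformly in $\sigma$, which is exactly what the bookkeeping of the flat exponential factors above delivers, relying on the definite signs of $1-a/b$ and $a/b-1$ near $\sigma=-1/b$; the remaining analysis is a routine application of Lemma~\ref{lem:4.4} together with the elementary identities for $e_{p,q}$.
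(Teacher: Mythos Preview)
Your argument is correct and takes a genuinely different route from the paper's. The paper sandwiches $Z(\sigma)$ between an upper bound $\tilde W(\sigma)$ (your $Z_0$) and a parametrized lower bound $(1+\lambda^q+\mu^{\tilde q})^{\sigma}W(\sigma)$, where $W$ is the monomial integral over a shrunk region $U(\lambda,\mu)$ on which both flat perturbations are small; it then proves (Lemma~8.4) that $W$ has the same leading asymptotics as $\tilde W$, quotes the asymptotics of $\tilde W$ from \cite{kn19}, and squeezes by sending $\lambda,\mu\to 0$. You bypass the auxiliary region and the squeezing entirely by showing directly that $Z_0(\sigma)-Z(\sigma)=O(1)$: your split along $W=\{x^{\tilde q}<e^{-1/|y|^{\tilde p}}\}$ together with the mean-value bound and the bookkeeping of the flat exponentials is exactly what makes this work, and it is sound. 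You also re-derive the asymptotics of $Z_0$ from Lemma~\ref{lem:4.4} rather than quoting \cite{kn19}; the substitution $y=e_{p,q}(x)t$ and the identity you use for the inner integral are correct, and your uniformity claim for the $-1/(b\sigma+1)+O(1)$ remainder holds because the tail $\int_1^T t^{b\sigma}((1+t^{-q})^\sigma-1)\,dt$ is dominated by $|\sigma|\int_1^\infty t^{b\sigma-q}\,dt$, which is bounded uniformly in $T$ and in $\sigma$ near $-1/b$.

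What each approach buys: the paper's sandwich is more modular (it isolates the new lower bound in Lemma~8.4 and reuses the known $\tilde W$ asymptotics verbatim), while your difference estimate is more direct and avoids introducing the parameters $\lambda,\mu$ and the separate computation of Lemma~8.4. One small point: for case~(iii) you invoke Lemma~\ref{lem:4.5} after shrinking $V$ so that $f<1$. This is legitimate, but your parenthetical that ``the part of the integral away from the origin clearly tends to a finite limit'' deserves one more line: on $[r_1',r_1]\times[0,r_2]$ one has $f\ge P_2\ge c\,y^{b-q}$ with $c>0$, and on $[0,r_1]\times[r_2',r_2]$ one has $f\ge P_3\ge c'\,x^{a-\tilde q}$, so on $V\setminus V'$ the integrand is dominated by an integrable function uniformly in $\sigma$ near $-1/b$; then either dominated convergence (where $f\ge 1$) or Lemma~\ref{lem:4.5} (where $f<1$) gives the limit. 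The paper's proof of (iii) is essentially the same: it uses $Z(\sigma)\le \tilde W(\sigma)$ for boundedness and then appeals to Lemma~\ref{lem:4.5}.
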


In order to prove Theorem \ref{thm:8.1},
we investigate the asymptotic limit of the integral $W(\sigma)$ in \eqref{eqn:8.5} below.
Let $\lambda, \mu$ be small positive real numbers.
We denote $e(x):=e_{p,q}(x)$ and $\tilde{e}(y):=e_{\tilde{p},\tilde{q}}(y)$, where $e_{p,q}$ is as in \eqref{eqn:4.1}.
We remark that $e$, $\tilde{e}$ are monotonously increasing on $[0,\infty)$.
Since $\frac{de}{dx}(0)=0$ and $\frac{d\tilde{e}}{dy}(0)$=0,
there exists $r\in (0,r_1]$ such that
$\frac{1}{\lambda}e(x)<\tilde{e}^{-1}(\mu x)<r_2$ on $(0,r)$.
Here, we denote
\begin{equation}\label{eqn:8.4.0}
\begin{split}
U(\lambda,\mu):=&\left\{(x,y)\in V: 0<x<r, \frac{1}{\lambda}e(x)<y<\tilde{e}^{-1}(\mu x)\right\}\\
=&\left\{(x,y)\in [0,r]\times [0,r_2]: e^{-1/|x|^p}< \lambda^q y^q, e^{-1/|y|^{\tilde{p}}}<\mu^{\tilde{q}} x^{\tilde{q}}\right\}.
\end{split}
\end{equation}
Notice that $U(\lambda,\mu)$ is not empty.
Let us consider the integral
\begin{equation}\label{eqn:8.5}
W(\sigma)
=\int_{U(\lambda,\mu)}
x^{a\sigma}y^{b\sigma}dxdy,
\end{equation}
where $a, b$ are as in \eqref{eqn:8.1}.
\begin{lemma}\label{lem:8.4}
The following hold.
\begin{enumerate}
\item
If $p>1-a/b$, then
\begin{equation}
\lim_{\sigma\to -1/b+0}(b\sigma+1)^{1-\frac{1-a/b}{p}}\cdot W(\sigma)
=\hat{C},
\end{equation}
where $\hat{C}$ is as in \eqref{eqn:3.10}.
\item
If $p=1-a/b$, then
\begin{equation}
\lim_{\sigma\to -1/b+0}|\log (b\sigma+1)|^{-1}\cdot W(\sigma)=\frac{1}{pq}.
\end{equation}
\end{enumerate}
\end{lemma}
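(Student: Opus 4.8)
The plan is to reduce $W(\sigma)$ to the integral $\tilde{K}(X)$ treated in Lemma \ref{lem:4.4}. Put $X:=b\sigma+1$, so that $X\to+0$ as $\sigma\to-1/b+0$. Performing the $y$-integration in \eqref{eqn:8.5} (legitimate since $b\sigma+1>0$ while the lower limit $\tfrac1\lambda e(x)$ is positive), and using $x^{a\sigma}=x^{-a/b+aX/b}$ together with $\bigl(\tfrac1\lambda e(x)\bigr)^{b\sigma+1}=\lambda^{-X}(e(x))^{X}$, one gets
\begin{equation}
W(\sigma)=\frac{1}{X}\int_{0}^{r}x^{-a/b+aX/b}\left(\bigl(\tilde{e}^{-1}(\mu x)\bigr)^{X}-\lambda^{-X}(e(x))^{X}\right)dx.
\end{equation}
I would then split the integrand by writing
\begin{equation}
\bigl(\tilde{e}^{-1}(\mu x)\bigr)^{X}-\lambda^{-X}(e(x))^{X}=\bigl(1-(e(x))^{X}\bigr)+(1-\lambda^{-X})(e(x))^{X}-\bigl(1-(\tilde{e}^{-1}(\mu x))^{X}\bigr),
\end{equation}
which yields a decomposition $W(\sigma)=\tfrac1X\tilde{K}(X)+E_{1}(X)+E_{2}(X)$, where $\tilde{K}(X)=\int_{0}^{r}x^{-a/b+aX/b}(1-(e(x))^{X})dx$ is precisely the integral \eqref{eqn:4.47.0} with $A=-a/b$, $B=a/b$ and the present value of $r$, and $E_{1},E_{2}$ collect the contributions of the two remaining terms.

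The next step is to show that $E_{1}(X)$ and $E_{2}(X)$ are $O(1)$ as $X\to+0$. For $E_{2}(X)=\tfrac{1-\lambda^{-X}}{X}\int_{0}^{r}x^{-a/b+aX/b}(e(x))^{X}dx$ this is immediate: $(1-\lambda^{-X})/X\to\log\lambda$, while by dominated convergence (the integrand is bounded by the integrable function $x^{-a/b}$, as $a/b<1$, and converges pointwise to $x^{-a/b}$) the integral tends to $r^{1-a/b}/(1-a/b)$. For $E_{1}(X)=-\tfrac1X\int_{0}^{r}x^{-a/b+aX/b}\bigl(1-(\tilde{e}^{-1}(\mu x))^{X}\bigr)dx$ I would use the elementary bound $1-t^{X}\le X\log(1/t)$ for $t\in(0,1)$, $X>0$, together with the explicit formula $\tilde{e}^{-1}(\mu x)=(\tilde{q}\,|\log(\mu x)|)^{-1/\tilde{p}}$, which is valid and lies in $(0,r_{2})$ on $(0,r)$ by the choice of $r$. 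This gives $0\le1-(\tilde{e}^{-1}(\mu x))^{X}\le\tfrac{X}{\tilde{p}}\log(\tilde{q}\,|\log(\mu x)|)$ on $(0,r)$, hence $|E_{1}(X)|\le\tfrac1{\tilde{p}}\int_{0}^{r}x^{-a/b}\log(\tilde{q}\,|\log(\mu x)|)dx<\infty$, using $x^{aX/b}\le1$ and the integrability of $x^{-a/b}\log|\log x|$ near the origin.

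Finally I would feed the asymptotics of $\tilde{K}(X)$ from Lemma \ref{lem:4.4} into this decomposition. If $p>1-a/b$ then $-1<-a/b<p-1$, so part (i) of Lemma \ref{lem:4.4} gives $\tilde{K}(X)=\hat{C}\,X^{(1-a/b)/p}(1+o(1))$ with $\hat{C}=\int_{0}^{\infty}v^{-a/b}(1-e^{-1/(qv^{p})})dv$ as in \eqref{eqn:3.10}; since $1-(1-a/b)/p>0$, the term $\tfrac1X\tilde{K}(X)\sim\hat{C}\,X^{(1-a/b)/p-1}$ dominates the $O(1)$ remainders, and multiplying by $(b\sigma+1)^{1-(1-a/b)/p}=X^{1-(1-a/b)/p}$ produces the limit $\hat{C}$. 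If $p=1-a/b$ then $-a/b=p-1$, and part (ii) of Lemma \ref{lem:4.4} gives $\tilde{K}(X)=\tfrac1{pq}X|\log X|(1+o(1))$, so $W(\sigma)=\tfrac1{pq}|\log X|(1+o(1))+O(1)$ and $|\log(b\sigma+1)|^{-1}\cdot W(\sigma)\to\tfrac1{pq}$. The only step that is not routine bookkeeping around Lemma \ref{lem:4.4} is the estimate on $E_{1}$, i.e.\ controlling the effect of replacing the slowly decaying upper limit $\tilde{e}^{-1}(\mu x)$ of the inner $y$-integral by $1$; I expect this to be the main obstacle, though the explicit form of $\tilde{e}^{-1}$ makes it manageable.
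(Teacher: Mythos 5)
Your proof is correct and follows essentially the same route as the paper: perform the inner $y$-integration, identify $\tfrac1X\tilde{K}(X)$ (with $A=-a/b$, $B=a/b$) as the leading term handled by Lemma \ref{lem:4.4}, and show the rest is $O(1)$. The paper organizes the remainder slightly differently — it splits the $y$-interval at $1/\lambda$ so that $W=W_1+W_2$ with $W_1=\tfrac{\lambda^{-X}}{X}\tilde{K}(X)$ and shows $W_2$ converges by dominated convergence, whereas you peel off $E_1,E_2$ algebraically and bound $E_1$ via $1-t^{X}\le X\log(1/t)$ and the explicit form of $\tilde{e}^{-1}$ — but both are the same reduction to the same key lemma and land on the same asymptotics.
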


\begin{proof}
For simplicity, we denote $X:=b\sigma+1$.
We decompose 
$W(\sigma)$ as follows:
\begin{equation}\label{eqn:8.12}
\begin{split}
W(\sigma)
&=\int_{0}^{r}x^{a\sigma}\left(\int_{e(x)/\lambda}^{\tilde{e}^{-1}(\mu x)}
y^{b\sigma}dy\right)dx\\
&=W_1(\sigma)+W_2(\sigma)
\end{split}
\end{equation}
with
\begin{equation}
\begin{split}
W_1(\sigma)
&=\int_{0}^{r}x^{a\sigma}\left(\int_{e(x)/\lambda}^{1/\lambda}
y^{b\sigma}dy\right)dx,\\
W_2(\sigma)
&=\int_{0}^{r}x^{a\sigma}\left(\int_{1/\lambda}^{\tilde{e}^{-1}(\mu x)}
y^{b\sigma}dy\right)dx.
\end{split}
\end{equation}

A simple computation gives
\begin{equation}\label{eqn:8.14}
\begin{split}
W_1(\sigma)&=\frac{1}{X\lambda^{X}}\int_{0}^{r}
x^{-a/b+aX/b}\left(1-\left(e(x)\right)^{X}\right)dx.
\end{split}
\end{equation}
Applying Lemma \ref{lem:4.4} (i), (ii) to \eqref{eqn:8.14},
we have that
if $p>1-a/b$, then
\begin{equation}\label{eqn:8.16}
\lim_{X \to +0}X^{1-\frac{1-a/b}{p}}\cdot W_1(\sigma)
=\frac{1}{q^{{(1-a/b)/p}}(1-a/b)}\Gamma\left(1-\frac{1-a/b}{p}\right),
\end{equation}
and if $p=1-a/b$, then
\begin{equation}\label{eqn:8.17}
\lim_{X \to +0}|\log X|^{-1}\cdot W_1(\sigma)=\frac{1}{pq}.
\end{equation}
On the other hand,
Lebesgue's convergence theorem gives that
\begin{equation}\label{eqn:8.15}
\begin{split}
\lim_{X \to +0}W_2(\sigma)
&=\int_{0}^{r}x^{-a/b}\left(\int_{1/\lambda}^{\tilde{e}^{-1}(\mu x)}
y^{-1}dy\right)dx\\
&=\int_{0}^{r}x^{-a/b}\left(\log\left(\tilde{e}^{-1}(\mu x)\right)+\log \lambda \right)dx\\
&=\frac{\tilde{p}}{\tilde{q}\mu^{-a/b+1}}
\int_{0}^{\tilde{e}^{-1}(\mu r)}
u^{-\tilde{p}-1}(\log u+ \log \lambda)(\tilde{e}(u))^{-a/b+1}du.
\end{split}
\end{equation}
We remark that the last equality is obtained from changing the integral variable by $u=\tilde{e}^{-1}(\mu x)$, and the last integral in \eqref{eqn:8.15} converges
since $-a/b+1>0$ and $\tilde{e}(\cdot)$ is flat at the origin.

Combining \eqref{eqn:8.12}, \eqref{eqn:8.16}, \eqref{eqn:8.17} and \eqref{eqn:8.15}
gives that
if $p>1-a/b$, then
\begin{equation}
\begin{split}
\lim_{\sigma \to -1/b+0}(b\sigma+1)^{1-\frac{1-a/b}{p}}\cdot W(\sigma)
&=
\lim_{\sigma \to -1/b+0}(b\sigma+1)^{1-\frac{1-a/b}{p}}\cdot W_1(\sigma)\\
&=\frac{1}{q^{{(1-a/b)/p}}(1-a/b)}\Gamma\left(1-\frac{1-a/b}{p}\right),
\end{split}
\end{equation}
and if $p=1-a/b$, then
\begin{equation}
\begin{split}
\lim_{\sigma \to -1/b+0}|\log (b\sigma+1)|^{-1}\cdot W(\sigma)
&=\lim_{\sigma \to -1/b+0}|\log (b\sigma+1)|^{-1}\cdot W_1(\sigma)\\
&=\frac{1}{pq}.
\end{split}
\end{equation}
\end{proof}

\begin{proof}[Proof of Theorem \ref{thm:8.1}]
Let $\lambda, \mu$ be small positive real numbers
and $U(\lambda,\mu)$ as in \eqref{eqn:8.4.0}.
Then we have for $\sigma<0$
\begin{equation}\label{eqn:8.8}
\begin{split}
Z(\sigma)
&>\int_{U(\lambda,\mu)}
\left|x^ay^b+x^a y^{b-q}e^{-1/|x|^p}+x^{a-\tilde{q}}y^{b}e^{-1/|y|^{\tilde{p}}}\right|^{\sigma}dxdy\\
&>\left(1+\lambda^q+\mu^{\tilde{q}}\right)^{\sigma}\cdot W(\sigma),
\end{split}
\end{equation}
where $W(\sigma)$ is as in \eqref{eqn:8.5}.
On the other hand,
we have the inequality
\begin{equation}\label{eqn:8.4}
Z(\sigma)<\tilde{W}(\sigma)
\end{equation}
with 
\begin{equation}
\begin{split}
\tilde{W}(\sigma)=\int_{V}
\left|x^ay^b+x^ay^{b-q}e^{-1/|x|^p}\right|^{\sigma}
dxdy
\end{split}
\end{equation}
since $\sigma<0$ and $q, \tilde{q}$ are even.
The asymptotic limit of $\tilde{W}(\sigma)$ have been already investigated.
\begin{lemma}[Theorem 3.1 of \cite{kn19}]\label{lem:8.2}
The following hold.
\begin{enumerate}
\item
If $p>1-a/b$, 
then
\begin{equation}
\lim_{\sigma\to-1/b+0}
(b\sigma+1)^{1-\frac{1-a/b}{p}}\cdot \tilde{W}(\sigma)
=\hat{C},
\end{equation}
where $\hat{C}$ is as in \eqref{eqn:3.10}.
\item
If $p=1-a/b$, 
then 
\begin{equation}
\lim_{\sigma\to-1/b+0}
|\log (b\sigma+1)|^{-1}\cdot \tilde{W}(\sigma)
=\frac{1}{pq}.
\end{equation}
\item
If $0<p< 1-a/b$, then the limit of 
$\tilde{W}(\sigma)$ as 
$\sigma\to-1/b+0$ exists. 
\end{enumerate}
\end{lemma}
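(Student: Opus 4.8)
The plan is to show that, up to a quantity converging as $\sigma\to-1/b+0$, the integral $\tilde{W}(\sigma)$ equals $\tilde{K}(b\sigma+1)/(b\sigma+1)$, where $\tilde{K}$ is the model integral \eqref{eqn:4.47.0} of Lemma \ref{lem:4.4} with parameters $A=-a/b$, $B=0$, $r=r_1$; the three cases of Lemma \ref{lem:8.2} then correspond exactly to the cases $A<p-1$, $A=p-1$, $A>p-1$ treated in Lemma \ref{lem:4.4} (i), (ii) and in the Remark following it. Throughout set $X:=b\sigma+1$ (so $X\to+0$) and $e(x):=e_{p,q}(x)$ as in \eqref{eqn:4.1}, noting $e^{-1/|x|^p}=(e(x))^q$ and that $e$ is flat at the origin. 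First I would use that $x^ay^b+x^ay^{b-q}e^{-1/|x|^p}=x^ay^{b-q}(y^q+(e(x))^q)\ge0$ on $V$ to write $\tilde{W}(\sigma)=\int_V x^{a\sigma}y^{b\sigma}(1+y^{-q}(e(x))^q)^\sigma dxdy$, and then substitute $y=e(x)s$ in the inner integral to obtain
\begin{equation}\label{eqn:8.2.plan1}
\tilde{W}(\sigma)=\int_0^{r_1}x^{a\sigma}(e(x))^X\,\Phi\!\left(\sigma;r_2/e(x)\right)dx,\qquad \Phi(\sigma;R):=\int_0^R s^{(b-q)\sigma}(s^q+1)^\sigma\,ds.
\end{equation}

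Next I would isolate the pole of $\Phi$. Using $s^{(b-q)\sigma}(s^q+1)^\sigma-s^{b\sigma}=s^{b\sigma}((1+s^{-q})^\sigma-1)$ and $\int_0^R s^{b\sigma}\,ds=R^X/X$, one gets $\Phi(\sigma;R)=R^X/X-D(\sigma)+(\mathrm{tail})$, where $D(\sigma):=\int_0^\infty s^{b\sigma}(1-(1+s^{-q})^\sigma)\,ds$ converges for $\sigma$ near $-1/b$ (the exponent $b\sigma$ exceeds $-1$ near $s=0$, and $b\sigma-q<-1$ near $s=\infty$ because $q\ge2$), the tail is $O(R^{b\sigma-q+1})$ uniformly in $\sigma$, and splitting $D(\sigma)$ at $s=1$ shows $D(\sigma)=X^{-1}+D_0(\sigma)$ with $D_0(\sigma)$ bounded and convergent. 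Inserting this into \eqref{eqn:8.2.plan1} with $R=r_2/e(x)$: the term $R^X/X$ produces $\frac{r_2^X}{X}\int_0^{r_1}x^{a\sigma}\,dx$, the tail produces a quantity controlled (and convergent) via $\int_0^{r_1}x^{a\sigma}(e(x))^q\,dx$, and $-D(\sigma)$ produces $-D(\sigma)K(X)$ with $K(X):=\int_0^{r_1}x^{a\sigma}(e(x))^X\,dx$; since $A=-a/b>-1$, Lemma \ref{lem:4.1} (iv) gives $K(X)\to r_1^{1-a/b}/(1-a/b)$. Collecting the $X^{-1}$ terms yields
\begin{equation}\label{eqn:8.2.plan2}
\tilde{W}(\sigma)=\frac1X\bigl(\Lambda(X)-K(X)\bigr)+C(\sigma),\qquad \Lambda(X):=r_2^X\,\frac{r_1^{a\sigma+1}}{a\sigma+1},
\end{equation}
with $C(\sigma)$ convergent as $\sigma\to-1/b+0$.

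The third step identifies $\Lambda(X)-K(X)$ with $\tilde{K}(X)$ modulo $O(X)$. Since $\Lambda$ is smooth in $X$ near $0$ with $\Lambda(0)=r_1^{1-a/b}/(1-a/b)=\int_0^{r_1}x^{-a/b}\,dx$, we have $\Lambda(X)-\Lambda(0)=O(X)$ and $\Lambda(0)-K(X)=\int_0^{r_1}x^{-a/b}(1-x^{(a/b)X}(e(x))^X)\,dx$. Peeling off the factor $x^{(a/b)X}$ and using $0\le1-x^{(a/b)X}\le\tfrac{a}{b}X|\log x|$ on $(0,r_1]$ (with $\int_0^{r_1}x^{-a/b}|\log x|\,dx<\infty$), this equals $\int_0^{r_1}x^{-a/b}(1-(e_{p,q}(x))^X)\,dx+O(X)=\tilde{K}(X)+O(X)$, where $\tilde{K}(X)$ is precisely \eqref{eqn:4.47.0} with $A=-a/b$, $B=0$, $r=r_1$. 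Hence \eqref{eqn:8.2.plan2} becomes $\tilde{W}(\sigma)=\tilde{K}(X)/X+\tilde{C}(\sigma)$ with $\tilde{C}(\sigma)$ convergent. Now the conditions $-a/b<p-1$, $-a/b=p-1$, $-a/b>p-1$ are exactly $p>1-a/b$, $p=1-a/b$, $0<p<1-a/b$. In the first case Lemma \ref{lem:4.4} (i) gives $X^{-(1-a/b)/p}\tilde{K}(X)\to\int_0^\infty v^{-a/b}(1-e_{p,q}(v))\,dv=\hat{C}$ as in \eqref{eqn:3.10}, and since the exponent $1-(1-a/b)/p$ is positive, multiplying $\tilde{W}(\sigma)$ by $(b\sigma+1)^{1-(1-a/b)/p}$ gives $X^{-(1-a/b)/p}\tilde{K}(X)+X^{1-(1-a/b)/p}\tilde{C}(\sigma)\to\hat{C}$. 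In the second case Lemma \ref{lem:4.4} (ii) gives $X^{-1}|\log X|^{-1}\tilde{K}(X)\to1/(pq)$, so multiplying by $|\log(b\sigma+1)|^{-1}$ gives the limit $1/(pq)$. In the third case the Remark after Lemma \ref{lem:4.4} gives that $X^{-1}\tilde{K}(X)$ converges, whence $\tilde{W}(\sigma)$ converges.

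The hardest part will be the cancellation of the $X^{-1}$ pole in \eqref{eqn:8.2.plan2}: $\Lambda(X)/X$ and $K(X)/X$ each blow up like $X^{-1}$ and only their difference is tame, so I must keep enough precision to recognise $\Lambda(0)-K(X)$ as the Lemma \ref{lem:4.4} integral up to $O(X)$ --- in particular controlling the spurious factor $x^{(a/b)X}$ introduced by the substitution, and verifying that every ``$O(1)$'' remainder genuinely converges. As a safeguard for case (iii), where only existence of the limit is asserted, one may instead note that after shrinking $r_1,r_2$ so that $x^ay^b+x^ay^{b-q}e^{-1/|x|^p}<1$ on $V$ the function $\sigma\mapsto\tilde{W}(\sigma)$ is monotone, so that boundedness of $\tilde{W}(\sigma)$ near $\sigma=-1/b$ together with Lemma \ref{lem:4.5} already yields the conclusion; shrinking $r_1,r_2$ is harmless because the removed part of the integral depends continuously on $\sigma$ up to $\sigma=-1/b$.
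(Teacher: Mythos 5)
Your proof is correct, and it is worth noting that the paper itself does not prove Lemma~\ref{lem:8.2}: it simply quotes it from Lemma~2.1 of \cite{kn19}, so there is no in-paper argument to compare against line by line. That said, your route fits the toolkit the present paper builds around the same situation: the whole point of Lemma~\ref{lem:4.4} is to supply exactly the three asymptotic regimes you need, and your reduction of $\tilde W$ to $\tilde K(X)/X$ plus a convergent remainder is the natural way to invoke it. In the paper the analogous reduction (for the smaller integral $W(\sigma)$ in Lemma~\ref{lem:8.4}) is done by splitting the $y$-integral at fixed cutoffs, whereas you arrive at the model integral by the substitution $y=e(x)s$ and a clean separation $\Phi(\sigma;R)=R^X/X-D(\sigma)+\text{tail}$; the two manoeuvres are morally equivalent, but yours makes the source of the $X^{-1}$ pole, and the precise cancellation against $\Lambda(X)/X$, very transparent.

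A few small points. First, the step of ``peeling off'' the spurious factor $x^{(a/b)X}$ (via $0\le 1-x^{(a/b)X}\le (a/b)X|\log x|$) is correct but unnecessary: Lemma~\ref{lem:4.4} is stated for arbitrary $B$, so one may take $A=-a/b$, $B=a/b$, $r=r_1$, in which case $\int_0^{r_1}x^{a\sigma}\,dx-K(X)$ is \emph{exactly} $\tilde K(X)$ with no $O(X)$ error, and only the harmless factor $r_2^X-1=O(X)$ remains. Second, for cases (i) and (ii) you only need $\tilde C(\sigma)$ bounded, not convergent, since it is then killed by the vanishing prefactor $X^{1-(1-a/b)/p}$ (positive exponent when $p>1-a/b$) or $|\log X|^{-1}$; your argument does establish convergence via dominated convergence, which is stronger than needed but fine. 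Third, your monotonicity safeguard for case (iii) is sound: after shrinking $r_1,r_2$ so that $0\le f<1$ on $V$, the integrand $|f|^{\sigma}$ is monotone in $\sigma$, so boundedness (which follows from $\tilde W=\tilde K(X)/X+O(1)$ and the Remark after Lemma~\ref{lem:4.4}) plus Lemma~\ref{lem:4.5} gives existence of the limit, and the removed annulus contributes a quantity that converges as $\sigma\to-1/b+0$ by dominated convergence because the factor $y^q+e^{-1/|x|^p}$ is bounded below there. In summary: the proposal is a complete and correct proof of the cited result, consistent in method with the surrounding Section~8 even though the paper defers this particular lemma to \cite{kn19}.
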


\begin{remark}
(i) In \cite{kn19}, the constant $\hat{C}$	is represented as
$\hat{C}=\int_0^{\infty}x^{-a/b}\left(1-e^{-1/(qx^p)}\right)dx$.

(ii)
The limit of $\tilde{W}(\sigma)$ as $\sigma\to-1/b+0$ in the case where $0<p<1-a/b$
is precisely investigated in \cite{kn19}.
\end{remark}

We assume $p>1-a/b$.
From \eqref{eqn:8.8}, \eqref{eqn:8.4},  
Lemmas \ref{lem:8.4} and \ref{lem:8.2},
we have the inequalities
\begin{equation}
\begin{split}
(1+\lambda^q+\mu^{\tilde{q}})^{-1/b}\cdot 
\hat{C}
&\leq \liminf_{\sigma\to -1/b+0}(b\sigma+1)^{1-\frac{1-a/b}{p}}\cdot Z(\sigma)\\
&\leq \limsup_{\sigma\to -1/b+0}(b\sigma+1)^{1-\frac{1-a/b}{p}}\cdot Z(\sigma)\\
&\leq \hat{C},
\end{split}
\end{equation}
where $\hat{C}$ is as in \eqref{eqn:3.10}.
Noticing that $Z(\sigma)$ is independent of $\lambda, \mu$ and
considering the limit as $\lambda\to 0$ and $\mu \to 0$,
we have \eqref{eqn:8.2}.
The case (ii) can be similarly shown.

Let us consider the case where $0<p<1-a/b$.
From \eqref{eqn:8.4} and Lemma \ref{lem:8.2},
we have the boundedness of $\limsup_{\sigma\to 1-/b+0}Z(\sigma)$.
Applying Lemma \ref{lem:4.5} to $Z(\sigma)$,
we obtain the assertion. Thus the proof of the theorem is complete.
\end{proof}

\subsection{Proof of Theorem \ref{thm:3.3}}

Let $f$ be as in \eqref{eqn:3.7} and $\varphi$ in \eqref{eqn:1.1}.
Under the assumption that $q,\tilde{q}$ are even, 
we see that $|f(\theta_1 x,\theta_2 y)|=|f(x,y)|$ for any $(x,y)\in\R^2$
and $\theta_1,\theta_2\in\{1,-1\}$. 
Using the orthant decomposition, we have
\begin{equation}\label{eqn:8.11}
Z_{f}(\varphi)(\sigma)=\sum_{\theta\in\{1,-1\}^2}\tilde{Z}_{f}(\varphi_{\theta})(\sigma)
\end{equation}
with
\begin{equation}
\tilde{Z}_{f}(\varphi)(\sigma)=\int_{\R_+^2}|f(x,y)|^{\sigma}\varphi_{\theta}(x,y)dxdy,
\end{equation}
where $\varphi_{\theta}$ is as in \eqref{eqn:7.7} with $\theta=(\theta_1,\theta_2)$.
We consider the integral $\tilde{Z}_{f}(\varphi)(\sigma)$.

First, we assume $p>1-a/b$.
For any $\epsilon>0$, 
there exist $r_1, r_2\in(0,1)$ such that
$\varphi(0,0)-\epsilon \leq 
\varphi(x,y)\leq 
\varphi(0,0)+\epsilon$
for $(x,y)\in V$,
where $V=\{(x,y)\in \R^2:0\leq x\leq r_1,0\leq y \leq r_2\}$.
These inequalities imply that
\begin{equation}\label{eqn:9.3}
\begin{split}
(\varphi(0,0)-\epsilon) \cdot 
Z(\sigma)
&\leq 
\tilde{Z}_f(\varphi)(\sigma)-
\int_{\R_+^2\setminus V}|f(x,y)|^{\sigma}\varphi(x,y)dxdy\\
&\leq
(\varphi(0,0)+\epsilon) \cdot 
Z(\sigma),
\end{split}
\end{equation}
where $Z(\sigma)$ is as in \eqref{eqn:8.1}.
We have
\begin{equation}\label{eqn:9.4}
\begin{split}
\int_{\R_+^2\setminus V}|f(x,y)|^{\sigma}\varphi(x,y)dxdy
=\int_{\R_+^2\setminus V}|x|^{(a-\tilde{q})\sigma} |y|^{(b-q)\sigma}|\tilde{f}(x,y)|^{\sigma}\varphi(x,y)dxdy,
\end{split}
\end{equation}
where $\tilde{f}(x,y)=x^{\tilde{q}}y^{q}+x^{\tilde{q}}e^{-1/|x|^p}+y^qe^{-1/|y|^{\tilde{p}}}$.
We remark that $\tilde{f}$ does not vanish on $\R_+^2\setminus V$,
which implies that
the integral converges when $\sigma>\max\{-1/(a-\tilde{q}),-1/(b-q)\}$.
In particular, we have
for $\sigma\geq -1/b$ that the integral in \eqref{eqn:9.4} converges
and the modulus of this integral is bounded by a positive constant
which is independent of $\sigma$.
As a result, 
the inequalities \eqref{eqn:9.3}, 
Theorem \ref{thm:8.1} and the orthant decomposition \eqref{eqn:8.11} easily imply (i) in Theorem \ref{thm:3.3}.
The case (ii) is analogously proven.

In the case (iii),
we have from Theorem \ref{thm:8.1} that the limit of $Z(\sigma)$ as $\sigma \to -1/b+0$ exists.
Applying Lemma \ref{lem:4.6} to $\tilde{Z}_{f}(\varphi)(\sigma)$
and the orthant decomposition \eqref{eqn:8.11} give the assertion,
which completes the proof of the theorem.

\bigskip
{\it Acknowledgments.}\quad 
The author would like to express his sincere gratitude to Joe Kamimoto and Atsushi Yamamori for many valuable discussions and comments, 
and to the referee for his/her careful reading of the manuscript and giving the author many valuable comments.


\begin{thebibliography}{99}
\bibitem{agv88}
\textsc{V. I. Arnold, S. M. Gusein-Zade and A. N. Varchenko},
{\rm Singularities of Differentiable Maps II}, 
Birkh$\ddot{{\rm a}}$user, Boston, 1988. 

%
\bibitem{ati70}
\textsc{M. F. Atiyah},
Resolution of singularities and division of distributions, 
Comm. Pure Appl. Math. 23 (1970), 145--150. 

\bibitem{bg69}
\textsc{I. N. Bernstein and S. I. Gel'fand},
Meromorphy of the function $P^{\lambda}$,  
Funktsional. Anal. i Prilozhen. 3 (1969), no. 1, 
84--85. 

\bibitem{cgp13}
\textsc{T. C. Collins, A. Greenleaf and M. Pramanik},
A multi-dimensional resolution of 
singularities with applications to analysis,
Amer. J. Math. 135 (2013), no. 5, 1179--1252.

\bibitem{dns05} 
\textsc{J. Denef, J. Nicaise and P. Sargos}, 
Oscillating integrals and Newton polyhedra, 
J. Anal. Math. 95 (2005), 147--172.

\bibitem{ds89}
\textsc{J. Denef and P. Sargos},
Poly\`edre de Newton et distribution $f_+^s$. I,
J. Anal. Math. 53 (1989), 201--218.

\bibitem{ds92}
\textsc{J. Denef and P. Sargos},
Poly\`edre de Newton et distribution $f_+^s$. II, 
Math. Ann. 293 (1992), no. 2, 193--211. 

\bibitem{gs64}
\textsc{I. M. Gel'fand and G. E. Shilov}, 
{\rm Generalized Functions I}. 
Academic Press, New York, 1964. 

\bibitem{gre06}
\textsc{M. Greenblatt}, 
Newton polygons and local integrability of 
negative powers of smooth functions in the plane, 
Trans. Amer. Math. Soc. 358  (2006), no. 2,  657--670. 

\bibitem{gre09}
\textsc{M. Greenblatt}, 
The asymptotic behavior of degenerate oscillatory integrals in two dimensions,
J. Funct. Anal. 257 (2009), no. 6, 1759--1798.

\bibitem{gre10jam}
\textsc{M. Greenblatt}, 
Resolution of singularities, 
asymptotic expansions of 
integrals and related phenomena, 
J. Anal. Math. 111 (2010), 221--245, 

\bibitem{hir64}
\textsc{H. Hironaka},
Resolution of singularities of an algebraic variety 
over a field of characteristic zero I, II, Ann. of Math. 79 (1964), 109--326. 

\bibitem{im11tams}
\textsc{I. A. Ikromov and D. M\"uller}, 
On adapted coordinate systems, 
Trans. Amer. Math. Soc. 363 (2011), no. 6, 2821--2848. 

\bibitem{igu78}
\textsc{J. Igusa},
Forms of higher degree. 
Tata Institute of Fundamental Research Lectures on 
Mathematics and Physics, 59. New Delhi, 1978. 

\bibitem{kam22}
\textsc{J. Kamimoto},
Resolution of singularities for $C^{\infty}$ functions and meromorphy of local zeta functions,
J. Funct. Anal. 286 (2024), no. 1, 110185.

\bibitem{kn16tor}
\textsc{J. Kamimoto and T. Nose},
Toric resolution of singularities in a certain class 
of $C^{\infty}$ functions and asymptotic analysis of 
oscillatory integrals, 
J. Math. Soc. Univ. Tokyo, 23 (2016), no. 2, 425--485.

\bibitem{kn16non}
\textsc{J. Kamimoto and T. Nose},
Newton polyhedra and weighted oscillatory integrals
with smooth phases, 
Trans. Amer. Math. Soc. 368 (2016), no. 8, 5301--5361.

\bibitem{kn17}
\textsc{J. Kamimoto and T. Nose},
Asymptotic limit of oscillatory integrals with certain smooth phases,
RIMS K\^oky\^uroku Bessatsu. B63 (2017), 103--114.

\bibitem{kn19}
\textsc{J. Kamimoto and T. Nose},
Non-polar singularities of local zeta functions in some smooth case,
Trans. Amer. Math. Soc. 372 (2019), no. 1, 661--676.

\bibitem{kn20}
\textsc{J. Kamimoto and T. Nose},
Meromorphy of local zeta functions in smooth model cases,
J. Funct. Anal. 278 (2020), no. 6, 108408, 25pp.

\bibitem{ot13}
\textsc{T. Okada and K. Takeuchi},
Coefficients of the poles of local zeta functions and their applications to oscillating integrals,
Tohoku Math. J. 65
(2013), no. 2, 159--178.

\bibitem{tem15}
\textsc{N. M. Temme},
Asymptotic Methods for Integrals, Series in Analysis, vol. 6, World Scientific Publishing Co. Pte. Ltd., Hackensack, NJ, 2015.

\bibitem{var76}
\textsc{A. N. Varchenko}, 
Newton polyhedra and estimation of oscillating 
integrals, Funct. Anal. Appl., 10 (1976), no. 3, 175--196.

\end{thebibliography}
\end{document}